\numberwithin{equation}{section}
\font\tengothic=eufm10 scaled\magstep 1
\font\sevengothic=eufm7 scaled\magstep 1
\newcommand{\fq}{\mathfrak q}
\newcommand{\fa}{\mathfrak a}
\newcommand{\fb}{\mathfrak b}
\newcommand {\PP}{\mathbb{P}}
\newcommand{\cL}{\mathcal{L}}
\newcommand{\qq}{{\mathfrak q}}
\newcommand{\fp}{{\mathfrak p}}
\def\cocoa{{\hbox{\rm C\kern-.13em o\kern-.07em C\kern-.13em o\kern-.15em A}}}
\DeclareMathOperator{\pnt}{\raise 0.5mm \hbox{\large\bf.}}
\newtheorem{theorem}{Theorem}[section]
\newtheorem{lemma}[theorem]{Lemma}
\newtheorem{proposition}[theorem]{Proposition}
\newtheorem{conjecture}[theorem]{Conjecture}
\theoremstyle{definition}
\newtheorem{remark}[theorem]{Remark}
\newtheorem{example}[theorem]{Example}
\newtheorem{notation}[theorem]{Notation}
\begin{document}

\title{On the Weak Lefschetz Property for Powers of Linear Forms}

\author[Juan Migliore]{Juan Migliore${}^*$}
\address{
Department of Mathematics, University of Notre Dame, Notre Dame, IN
46556, USA}
\email{Juan.C.Migliore.1@nd.edu}

\author[Rosa M. Mir\'o-Roig]{Rosa M. Mir\'o-Roig${}^{**}$}
\address{Facultat de Matem\`atiques, Department d'\`Algebra i Geometria, Gran Via des les Corts Catalanes 585, 08007 Barcelona, Spain}
\email{miro@ub.edu}

\author[Uwe Nagel]{Uwe Nagel${}^{***}$}
\address{Department of Mathematics,
University of Kentucky, 715 Patterson Office Tower,
Lexington, KY 40506-0027, USA}
\email{uwenagel@ms.uky.edu}

\thanks{\noindent
${}^*$ Part of the work for this paper was done while the first
author was sponsored by the National Security Agency under Grant
Number H98230-09-1-0031.\\
${}^{**}$ Partially supported by MTM2010-15256. \\
${}^{***}$ Part of the work for this paper was done while the third
author was sponsored by the National Security Agency under Grant
Number H98230-09-1-0032.\\
}

\subjclass[2000]{Primary: 13D02, 14C20, 13C13. Secondary: 13E10, 13D40}
\keywords{Weak Lefschetz property, Artinian algebra, powers of linear forms.}

\begin{abstract} In \cite{ss}  Schenck and  Seceleanu showed that in three variables, any ideal generated by powers of linear forms has the WLP. This result contrasts with our previous examples in \cite{mmn} of ideals generated by powers of linear forms which fail the WLP. Set $R:=k[x_1,\dots,x_r]$. Assume $1<a_1 \leq  \dots  \leq a_{r+1}$.
In this paper, we concentrate our attention on almost complete intersection ideals $I =  \langle L_1^{a_1}, \dots ,L_r^{a_r},L_{r+1}^{a_{r+1}} \rangle \subset
R$ generated  by powers of    general linear forms $L_{i}$. Our approach is via the connection (thanks to Macaulay duality) to fat point ideals, together with a reduction to a smaller projective space, and we prove:

\begin{itemize}
\item If $r=4$ and $a_1=2$, then $R/I$ has the WLP

\item Assume $r=4$ and  $a_1 + a_2 + a_3 + a_4$ even.  Set $\lambda = \frac{a_1+a_2+a_3+a_4}{2} -2$. It holds
\begin{itemize}
\item If $a_5 \geq \lambda$ then  $R/I$ has the WLP.

\item If $a_5 < \lambda$ and $a_1+a_4 \geq a_2+a_3$  then $R/I$ fails the WLP.

\item  If $a_5 < \lambda$,  $a_1+a_4 < a_2+a_3$ and $2a_5 +a_1 -a_2 -a_3 -a_4 \geq 0$ then $R/I$ fails the WLP.
\end{itemize}

\item Assume $r=4$ and  $a_1 + a_2 + a_3 + a_4$ odd.  Set $\lambda = \frac{a_1+a_2+a_3+a_4-5}{2}$. It holds
\begin{itemize}
\item If $a_5 \geq \lambda -1$ then  $R/I$ has the WLP.

\item If $a_5 < \lambda -1$ and $a_1+a_4 \geq a_2+a_3$  then $R/I$ fails the WLP.

    \item If $a_5 < \lambda -1$,  $a_1+a_4 < a_2+a_3$ and $2a_5 +a_1 +3 -a_2 -a_3 -a_4 \geq 0$ then $R/I$ fails the WLP.
\end{itemize}

\item If $r=5$ and $a_1= \dots =a_6=d$, then  $R/I$ fails the WLP $ \Leftrightarrow $  $d>3$.
 \item If $r=5$, $a_1= \dots =a_5=d$ and $a_6=d+e$, $e\ge 1$.  Then:

\begin{itemize}
\item If $d$ is odd, then $R/I$ has the WLP $ \Leftrightarrow $  $e \ge
\frac{3d-5}{2}$.

\item If $d$ is even, $R/I$ has the WLP $ \Leftrightarrow $ $e \ge
\frac{3d-8}{2}$.

\end{itemize}

\item If $r=2n$, $n\ge 3$, and $a_1= \dots =a_{2n+1}=d$, then  $R/I$ fails the WLP $ \Leftrightarrow $  $d>1$.

\end{itemize}

\noindent Other examples are analyzed and we end up with a conjecture which says that if the  number of variables is odd and all powers of the same degree, say $d$, then  the WLP fails for all $d>1$. We present some further evidence for this conjecture.

\end{abstract}


\maketitle

\tableofcontents

\section{Introduction}

Ideals generated by powers of linear forms have attracted a great deal of attention recently.  For example, their Hilbert functions have been the focus of the papers \cite{AP}, \cite{sx}, \cite{hss}, among others.  In this paper we obtain further results in this direction, and relate them to the presence or failure of the Weak Lefschetz Property, which we now recall.  

Given a standard graded artinian algebra $A = R/I$, where $R = k[x_1,\dots,x_r]$ and $k$ is a field, a natural question is whether multiplication by a general linear form has maximal rank, from any degree to the next.  When this property does hold, the algebra is said to have the {\em Weak Lefschetz Property (WLP)}.  One would naively expect this property to hold, and so it is interesting to find classes of algebras where it fails, and to understand what is it about the algebra that prevents this property from holding.  There has been a long series of papers, by many authors, studying different aspects of this problem.  Even the characteristic of $k$ plays an interesting role \cite{mmn}, \cite{LZ}, \cite{BK2}.

The first result in this direction is due to R.\ Stanley \cite{stanley} and J.\ Watanabe \cite{watanabe}, who showed that, in characteristic 0,  the property holds for an artinian complete intersection generated by powers of variables.  In fact, they showed that multiplication by any power of a general linear form has maximal rank (i.e. that the {\em Strong Lefschetz Property (SLP)} holds).  Since the property is preserved after a change of variables, their result shows that it holds for any complete intersection whose generators are powers of linear forms.  By semicontinuity, it holds for a complete intersection whose generators (of arbitrary degree) are chosen generically.

There  are (at least) three natural directions suggested by this
theorem.  First, we can ask whether the property holds for
arbitrary complete intersections.  It was shown by T.\ Harima, J.\
Watanabe and the first and third authors  in \cite{HMNW} that in
two variables, {\em all} artinian algebras have the WLP.  In the
same paper, it was shown that it also holds for arbitrary artinian
complete intersections in three variables.  It remains open
whether it also holds for arbitrary complete intersections in
arbitrarily many variables.

Second, a natural question arising from the theorem of Stanley and
of Watanabe is to ask for which monomial ideals does the WLP hold
or not hold.  F.\ Zanello \cite{zanello} and H.~Brenner and A.\
Kaid \cite{BK} gave very simple examples to show that even level
monomial ideals need not have this property, and the latter gave
an example that was even an almost complete intersection (the
ideal was in a ring with three variables and had four minimal
generators).  This latter fact gave an negative answer to a
question of the first two authors \cite{mm}.  In \cite{mmn} we
gave a much more extensive study of monomial almost complete
intersections and when they fail to have the WLP.  This work was
extended by D.\ Cook II and the third author in \cite{CN}. In \cite{BMMNZ},  we showed that the only other situation where level monomial ideals have to have the WLP is 3 variables, type 2.

A third interesting problem suggested by the result of Stanley and of Watanabe is to ask when the WLP holds for powers of $\geq r+1$ linear forms, since up to a change of variables their result says that any complete intersection of powers of linear forms has the WLP.  In \cite{mmn}, we showed by example that in four variables, for $d = 3,\dots,12$, an ideal generated by the $d$-th powers of five general linear forms does not have the WLP.  On the other hand, H.\ Schenck and A.\ Seceleanu \cite{ss} then gave the surprising result that in three variables, {\em any} ideal generated by powers of linear forms has the WLP.  In contrast, Harbourne, Schenck and Seceleanu \cite{hss} have recently shown the following: Let $I = \langle \ell_1^t,\dots,\ell_n^t \rangle \subset k[x_1,\dots,x_4]$ with $\ell_i$ generic linear forms.  If $n \in \{ 5,6,7,8\}$ then the WLP fails, respectively, for $t \geq \{ 3,27,140,704\}$.

In this paper, we study the  WLP for quotients $k[x_1,\cdots ,x_r]/I$ where $I$ is an almost complete intersection ideal
 generated  by powers of general linear forms. As a main tool we first use the inverse system dictionary to relate an ideal $I\subset k[x_1, \cdots ,x_r]$ generated by powers of linear forms to an ideal of fat points in $\PP^{r-1}$, and  then we show that the WLP problem of an ideal generated by powers of linear forms is  closely connected to the geometry  of the linear system  of hypersurfaces in $\PP^{r-2}$ of fixed degree  with preassigned multiple points.

Let us briefly explain how this paper is organized. We begin in  Section \ref{gen appr}
explaining the tools that  are applied
throughout the paper.  First, we recall  a result of Emsalem and Iarrobino which gives  a duality between powers of linear forms and ideals of fat points in $\mathbb P^{r-1}$. Then, we reduce our WLP problem to one of computing the Hilbert function of $n$ general fat points in $\mathbb P^{r-2}$
or, equivalently, to compute the dimension of the linear system of hypersurfaces in $\PP^{r-2}$ of degree $d$  having some points of fixed multiplicity. Moreover, using Cremona transformations, one can relate two different linear systems (see  \cite{LU}, or \cite{Dumnicky}, Theorem 3).

In Section \ref{4-variables},  we consider the case of 4 variables
and  we give a fairly complete  answer about the failure of the
WLP  for $I= \langle
L_1^{a_1},L_2^{a_2},L_3^{a_3},L_4^{a_4},L_5^{a_5} \rangle \subset
k[x_1,x_2,x_3,x_4]$, where $L_{i}$ are  general linear forms and
$2\leq a_1 \leq a_2 \leq a_3 \leq a_4 \leq a_5$. In Section
\ref{5-variables}, we deal with 5 variables and we completely
determine  when an ideal  generated by uniform (resp. almost
uniform) powers of six linear forms fails WLP. We add some
examples to illustrate  that our methods extend beyond the
mentioned results. The main result of Section \ref{uniform-powers}
is Theorem \ref{2n var uniform case} where we give a complete
answer to the uniform case when the number of variables is even;
in particular, we solve  Conjecture 5.5.2 in \cite{hss} when the
number of variables is even. The case of an odd number of
variables is left as an open conjecture and we present some
evidence for this conjecture, including the case of seven
variables.

Finally it is worthwhile to point out that the approach of this work can be applied to many other situations, in particular, when
the generators do not all have the same degree, but that the calculations quickly become
overwhelming.


\section{General approach} \label{gen appr}

Let $R = k[x_1,\dots,x_r]$ be a polynomial ring, where $k$ is a field of characteristic zero.

\begin{notation}
Throughout this paper, when $m$ is any integer, we will denote
\[
[m]_+ = \max \{ m,0 \}.
\]
Also, for a standard graded algebra $A$ we denote by $h_A$ the Hilbert function
\[
h_A(t) = \dim_k [A]_t .
\]
\end{notation}

For any artinian ideal $I \subset R$ and a general linear form $\ell \in R$, the exact sequence
\[
\cdots \rightarrow [R/I]_{m-1} \stackrel{\times \ell}{\longrightarrow} [R/I]_m \rightarrow [R/(I,\ell)]_m \rightarrow 0
\]
gives, in particular, that the multiplication by $\ell$  will fail to have maximal rank exactly when
\begin{equation}
  \label{eq:max-rank-I}
\dim_k [R/(I,\ell)]_m \neq \max\{ \dim_k [R/I]_m - \dim_k [R/I]_{m-1} , 0 \};
\end{equation}
in that case, we will say that  $R/I$  fails  the WLP in degree $m$. In several of the papers mentioned above, this failure was studied via an examination of the splitting type of the first syzygy bundle of $I$.  For powers of linear forms, we give an alternative approach, which we will implement in the subsequent sections.

We first recall a result of Emsalem and Iarrobino giving a duality between powers of linear forms and ideals of fat points in $\mathbb P^{n-1}$.  We only quote Theorem I in \cite{EI} in the form that we need.

\begin{theorem}[\cite{EI}]   \label{thm:inverse-system}
Let $\langle L_1^{a_1} ,\dots,L_n^{a_n} \rangle \subset R$ be an ideal generated by powers of $n$ general linear forms.  Let $\wp_1, \dots, \wp_n$ be the ideals of $n$ general points in $\mathbb P^{r-1}$.  (Each point is actually obtained explicitly from the corresponding linear form by duality.)  Choose positive integers $a_1,\dots,a_n$.  Then for any integer $j \geq \max\{ a_i + 1 \}$,
\[
\dim_k \left [R/ \langle L_1^{a_1}, \dots, L_n^{a_n} \rangle  \right ]_j =
\dim_k \left [ \wp_1^{j-a_1 +1} \cap \dots \cap \wp_n^{j-a_n+1} \right ]_j .
\]
\end{theorem}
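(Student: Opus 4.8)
The plan is to deduce the statement from Macaulay's inverse system (apolarity) duality, which converts the additive, quotient data on the side of the forms $L_i$ into the multiplicative, intersection data on the side of the points $\wp_i$. Since the authors quote \cite{EI} only in the form stated, I would give a self-contained derivation. Work in a dual polynomial ring $S = k[y_1,\dots,y_r]$ on which $R$ acts by differentiation, $x_i \circ F = \partial F/\partial y_i$; because $\chara k = 0$ this action is perfect, yielding in each degree a nondegenerate pairing $R_j \times S_j \to k$. For a homogeneous ideal $I \subseteq R$ let $I^\perp = \{F \in S : \theta \circ F = 0 \text{ for all } \theta \in I\}$ be its inverse system. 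Then $[I^\perp]_j$ is exactly the annihilator of $[I]_j \subseteq R_j$ under the pairing, so $\dim_k [I^\perp]_j = \dim_k R_j - \dim_k [I]_j = \dim_k [R/I]_j$ for \emph{every} $j$. Moreover, since $I = \sum_i (L_i^{a_i})$ is a sum of ideals, one has $I^\perp = \bigcap_i (L_i^{a_i})^\perp$, because $F$ is killed by $I$ if and only if it is killed by each generator.

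First I would compute the inverse system of a single power $(L^a)^\perp$. After a linear change of coordinates I may assume $L = x_1$, in which case $x_1^a \circ F = \partial^a F/\partial y_1^a$, which in characteristic $0$ vanishes precisely when $\deg_{y_1} F \le a-1$. Thus $[(x_1^a)^\perp]_d$ is the span of the degree-$d$ monomials involving $y_1$ to order at most $a-1$; comparing with the point $\wp = (y_2,\dots,y_r)$ dual to $x_1$ --- whose $m$-th power in degree $d$ consists of the monomials with $y_1$-degree at most $d-m$ --- one sees $[(x_1^a)^\perp]_d = [\wp^{\,d-a+1}]_d$. The point to check is that this identity is coordinate-free: the pairing is $\GL_r$-equivariant, and the change of variables sending $L$ to $x_1$ carries the point dual to $L$ to $[1:0:\dots:0]$, so for an arbitrary linear form $L$ with dual point $\wp$ we obtain $[(L^a)^\perp]_d = [\wp^{\,d-a+1}]_d$.

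Combining the two steps, in degree $j$ I get
\[
[I^\perp]_j = \bigcap_i [(L_i^{a_i})^\perp]_j = \bigcap_i [\wp_i^{\,j-a_i+1}]_j = \Big[\bigcap_i \wp_i^{\,j-a_i+1}\Big]_j,
\]
where the last equality merely records that intersection commutes with taking the degree-$j$ graded piece. Feeding this into $\dim_k[R/I]_j = \dim_k [I^\perp]_j$ yields the asserted equality. The hypothesis $j \ge \max\{a_i+1\}$ enters only to guarantee that every exponent $j-a_i+1$ is positive, so that the right-hand side is literally an intersection of honest powers of the point ideals (for points, symbolic and ordinary powers coincide, so there is no ambiguity); for smaller $j$ the condition imposed by $L_i^{a_i}$ on $S_j$ becomes vacuous and would have to be read as $\wp_i^0 = S$.

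The main obstacle is not any single computation but setting up the duality cleanly: one must pin down the differentiation action so that the pairing is perfect in characteristic $0$, verify its equivariance in order to legitimize the reduction to $L=x_1$, and keep careful track of exactly which point is dual to which form --- the parenthetical remark in the statement that \emph{each point is obtained explicitly from the corresponding linear form by duality} is precisely the bookkeeping that must be made precise. It is worth stressing that no genericity of the $L_i$ is needed for this identity; generality of the points will be invoked only later, when one actually evaluates $\dim_k[\bigcap_i \wp_i^{\,j-a_i+1}]_j$.
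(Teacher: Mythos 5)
Your proposal is correct. Note that the paper does not actually prove this statement: it is quoted verbatim from Emsalem--Iarrobino \cite{EI} ("We only quote Theorem I in \cite{EI} in the form that we need"), so there is no internal proof to compare against. Your derivation --- the perfect differentiation pairing $R_j\times S_j\to k$ in characteristic $0$, the identity $\dim_k[I^\perp]_j=\dim_k[R/I]_j$, the decomposition $I^\perp=\bigcap_i(L_i^{a_i})^\perp$, and the computation $[(L^a)^\perp]_d=[\wp^{\,d-a+1}]_d$ via reduction to $L=x_1$ --- is precisely the standard Macaulay-duality argument underlying the cited result, and your observations that the hypothesis $j\ge\max\{a_i+1\}$ only serves to keep the exponents positive and that no genericity of the $L_i$ is needed for the identity itself are both accurate.
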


Now, we observe that the ideal $(I,\ell)$ is also an ideal generated by powers of linear forms!  We conclude that if $\wp$ is the ideal of the point dual to $\ell$ then
\[
\dim_k \left [R/ \langle L_1^{a_1}, \dots, L_n^{a_n}, \ell  \rangle \right ]_j =
\dim_k \left [  \wp_1^{j-a_1 +1} \cap \dots \cap \wp_n^{j-a_n+1} \cap \wp^j  \right ]_j .
\]
Consider the points $P_1,\dots, P_n, P$  in $\mathbb P^{r-1}$ defined by the ideals $\wp_1,\dots,\wp_n,\wp$ respectively.  Let $\lambda_i$ be the line joining $P$ to $P_i$, and let $H = \mathbb P^{r-2}$ be a general hyperplane defined by a linear form $L_H$.  Let $Q_i$ be the point of intersection of $\lambda_i$ with $H$.  For any positive integer $m$, we will denote by $\lambda_i^m$ the curve with defining ideal $I_{\lambda_i}^m$, and notice that $\lambda_i^m$ is arithmetically Cohen-Macaulay.  Thus the hyperplane section  of $\lambda_i^m$ by $H$ has saturated ideal $\qq_i^m = (I_{\lambda_i}^m,L_H)/(L_H)$ in the coordinate ring $R/(L_H)$ of $H$, and $\qq_i^m$ defines a fat point  $Q_i^m$ in $H = \mathbb P^{r-2}$.  The curve $Y = \lambda_1^{j-a_1 +1} \cup \dots \cup \lambda_n^{j-a_n+1}$ is the cone over $Q_1^{a_1} \cup \dots \cup Q_n^{a_n}$, and thus is also arithmetically Cohen-Macaulay.

\begin{proposition} \label{reduce proj sp}
\[
\begin{array}{rcl}
\dim_k \left [ R/ \langle L_1^{a_1}, \dots, L_n^{a_n}, \ell  \rangle  \right ]_j & = &
\dim_k \left [ \wp_1^{j-a_1 +1} \cap \dots \cap \wp_n^{j-a_n+1} \cap \wp^j \right ]_j  \ \ \ \hbox{\rm (in $k[x_1,\dots,x_{r}]$)} \\
& = & \dim_k \left [ \qq_1^{j-a_1 +1} \cap \dots \cap \qq_n^{j-a_n+1} \right ]_j \ \ \ \hbox{\rm (in $k[x_1,\dots,x_{r-1}]$)} .
\end{array}
\]

\end{proposition}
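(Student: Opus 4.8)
For the first equality, the plan is to apply Theorem \ref{thm:inverse-system} one further time. As already observed before the statement, $\langle L_1^{a_1},\dots,L_n^{a_n},\ell\rangle$ is itself an ideal generated by powers of $n+1$ general linear forms, the last one being $\ell=\ell^{1}$ with exponent $1$ and dual point $P$. Applying the theorem to these $n+1$ exponents, the factor contributed by $\ell$ is $\wp^{\,j-1+1}=\wp^{\,j}$, which is exactly the middle expression; the hypothesis $j\ge\max\{a_i+1\}$ needed to invoke the theorem is in force and also dominates the bound $1+1$ coming from $\ell$, so this step requires no new input.

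For the second equality I would exploit the cone structure set up before the statement. Fix homogeneous coordinates with $P=[0:\dots:0:1]$ and $L_H=x_r$, and let $\pi\colon\PP^{r-1}\dashrightarrow H=\PP^{r-2}$ be the linear projection with centre $P$, so that $\pi^*$ sends a degree-$j$ form $\bar F\in k[x_1,\dots,x_{r-1}]$ to the degree-$j$ form of $R$ that does not involve $x_r$. I would first check that the image of $\pi^*$ is exactly $[\wp^{\,j}]_j$: a form of degree $j$ lies in $\wp^{\,j}=(x_1,\dots,x_{r-1})^{\,j}$ iff every monomial occurring in it has degree at least $j$ in $x_1,\dots,x_{r-1}$, and since the total degree is $j$ this forces the exponent of $x_r$ to vanish. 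Thus $\pi^*$ identifies $[k[x_1,\dots,x_{r-1}]]_j$ with $[\wp^{\,j}]_j$, and the condition $F\in[\wp^{\,j}]_j$ records precisely that $F$ is pulled back from $H$.

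The heart of the argument is the equivalence, for each $i$ and with $m_i:=j-a_i+1$, that $F=\pi^*\bar F$ lies in $\wp_i^{\,m_i}$ if and only if $\bar F$ lies in $\qq_i^{\,m_i}$. Since $P_i\neq P$, the projection $\pi$ is defined and smooth at $P_i$ and carries $P_i$ to $Q_i$; hence $\pi^*$ preserves order of vanishing, so $F$ vanishes to order $m_i$ at the smooth point $P_i$ exactly when $\bar F$ vanishes to order $m_i$ at the smooth point $Q_i$, i.e. exactly when $\bar F\in\qq_i^{\,m_i}$. Equivalently, one argues that $\lambda_i$ is the cone over $Q_i$, so that $I_{\lambda_i}=\qq_i R$ and $I_{\lambda_i}^{\,m_i}=\qq_i^{\,m_i}R$; for $F$ already lying in $k[x_1,\dots,x_{r-1}]$ this gives $F\in I_{\lambda_i}^{\,m_i}\iff\bar F\in\qq_i^{\,m_i}$, where the containment $I_{\lambda_i}^{\,m_i}\subseteq\wp_i^{\,m_i}$ yields one implication and the vanishing-order computation the converse. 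Intersecting over $i=1,\dots,n$ and imposing $F\in[\wp^{\,j}]_j$ (which just says $F=\pi^*\bar F$), the isomorphism $\pi^*$ carries $[\qq_1^{\,j-a_1+1}\cap\dots\cap\qq_n^{\,j-a_n+1}]_j$ onto $[\wp_1^{\,j-a_1+1}\cap\dots\cap\wp_n^{\,j-a_n+1}\cap\wp^{\,j}]_j$, yielding the desired equality of dimensions.

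The step needing the most care is matching the ideal $\qq_i^{\,m_i}$ produced by the vanishing-order argument (the honest $m_i$-th power of the point ideal of $Q_i$ in $H$) with its definition $\qq_i^{\,m_i}=(I_{\lambda_i}^{\,m_i},L_H)/(L_H)$ as a hyperplane section. This is exactly where I would invoke the arithmetic Cohen--Macaulayness of $\lambda_i^{\,m_i}$ recorded before the statement: it guarantees that $L_H$ is a nonzerodivisor on $R/I_{\lambda_i}^{\,m_i}$, so that no saturation is lost in restricting to $H$ and $(I_{\lambda_i}^{\,m_i},L_H)/(L_H)$ is indeed the saturated ideal of the fat point $Q_i^{\,m_i}$. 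General position of the $L_i$ enters only to ensure $P_i\neq P$, so that $\pi$ is defined at $P_i$ and the identifications above make sense; the dimension count is then a formal consequence of the linear isomorphism $\pi^*$.
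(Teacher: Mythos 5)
Your proof is correct and follows essentially the same route as the paper: the first equality is Emsalem--Iarrobino applied to the enlarged set of forms with $\ell=\ell^1$ contributing $\wp^j$, and the second comes from observing that a degree-$j$ form in $[\wp^j]_j$ involves only $x_1,\dots,x_{r-1}$, hence is a cone over a form on $H$, with the fat-point conditions at the $P_i$ translating to those at the $Q_i$. The paper packages this via the ACM cone $Y$ and its hyperplane section where you phrase it via the projection $\pi^*$ preserving vanishing orders, but these are the same argument.
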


\begin{proof}
The first equality is the result of Emsalem and Iarrobino mentioned above.
Without loss of generality let $P = [0,\dots,0,1]$, with defining ideal $\wp = \langle x_1,\dots,x_{r-1}\rangle$ and assume that $H$ is defined by $x_r = 0$.  Any form $F \in \left [ \wp_1^{j-a_1 +1} \cap \dots \cap \wp_n^{j-a_n+1} \cap \wp^j \right ]_j $ involves only the variables $x_1,\dots, x_{r-1}$, being in $[\wp^j]_j$.  Thus $F \in [I_Y]_j$, so viewing $F$ in $k[x_1,\dots,x_{r-1}]$ we see that $F$ vanishes on the hyperplane section of $Y$, i.e. $F \in [ \qq_1^{j-a_1 +1} \cap \dots \cap \qq_n^{j-a_n+1} ]_j $.  On the other hand, for any $F \in [ \qq_1^{j-a_1 +1} \cap \dots \cap \qq_n^{j-a_n+1} ]_j $, viewing $F$ in $k[x_1,\dots,x_r]$ implies that $F$ vanishes on $Y$, so $F$ vanishes on the subscheme of $Y$ defined by $\wp_1^{j-a_1 +1} \cap \dots \cap \wp_n^{j-a_n+1}$, and $F$ is in $\wp^j$ since it involves only $x_1,\dots,x_{r-1}$ and has degree $j$.
\end{proof}

In this way, we reduce our WLP problem to one of computing the Hilbert function of $n$ general fat points in $\mathbb P^{r-2}$. From now on, we will denote by
$${\mathcal L}_{r-2}(j; j-a_1+1,j-a_2+1,\cdots ,j-a_n+1)$$
the linear system $[ \qq_1^{j-a_1 +1} \cap \dots \cap \qq_n^{j-a_n+1} ]_j\subset [k[x_1,\cdots ,x_{r-1}]]_j $.
In order to simplify notation, we use superscripts to indicate repeated entries. For example,
$\cL_3(j; 5^2, 2^3) = \cL_3 (j; 5, 5, 2, 2, 2)$.

Notice that, for every linear system $\cL_r (j; a_1,\ldots,a_n)$, one has
\[
\dim_k  \cL_r (j; a_1,\ldots,a_n) \ge \max \left \{0, \binom{j+r}{r} - \sum_{i=1}^n \binom{a_i + r-1}{r} \right \},
\]
where the right-hand side is called the {\em expected dimension} of the linear system. If the inequality is strict, then the linear system $\cL_r (j; a_1,\ldots,a_n)$ is called {\em special}. It is a difficult problem to classify the special linear systems.

Using Cremona transformations, one can relate two different linear systems (see \cite{Nagata}, \cite{LU}, or \cite{Dumnicky}, Theorem 3).

\begin{lemma}
  \label{lem:Cremona}
Let $n > r \ge 2$ and  $j, a_1,\ldots,a_n$ be non-negative integers, and set $m = (r-1) j - (a_1 + \cdots + a_{r+1})$. If $a_i + m \ge 0$ for all $i = 1,\ldots,r+1$, then
\[
\dim_k \cL_r (j; a_1,\ldots,a_n) = \dim_k \cL_r (j + m; a_1 +m,\ldots,a_{r+1} +m , a_{r+2},\ldots,a_n).
\]
\end{lemma}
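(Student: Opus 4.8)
The plan is to obtain the equality from a \emph{standard Cremona transformation} based at $r+1$ of the $n$ general points. Because the points are general I may normalize coordinates so that the first $r+1$ of them are the coordinate vertices $e_0,\dots,e_r$ of $\PP^r$, and then take the standard degree-$r$ Cremona involution
\[
\Phi\colon \PP^r \dashrightarrow \PP^r, \qquad [x_0:\dots:x_r] \longmapsto \Big[\textstyle\prod_{l\neq 0}x_l : \dots : \prod_{l\neq r}x_l\Big].
\]
The candidate isomorphism between the two linear systems sends a form $F$ of degree $j$ in $\cL_r(j;a_1,\dots,a_n)$ to $\Phi^*F$ divided by the fixed monomial that is forced into its base locus. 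Since $\Phi\circ\Phi=\mathrm{id}$, the analogous construction in the opposite direction is its inverse; hence the map is a $k$-linear isomorphism and the two dimensions coincide.

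To pin down the target system I would pass to the blow-up $\pi\colon X\to\PP^r$ at $P_1,\dots,P_{r+1}$, with $H=\pi^*\cO(1)$ and exceptional divisors $E_1,\dots,E_{r+1}$, and compute the action of $\Phi$ on proper-transform divisor classes. A direct calculation gives
\[
H \longmapsto rH-(r-1)\sum_{i=1}^{r+1}E_i, \qquad E_i \longmapsto H-\sum_{l\neq i}E_l \quad (1\le i\le r+1),
\]
the second displayed sum running over the base indices $l\neq i$, while the data attached to the remaining general points $P_{r+2},\dots,P_n$ are carried to general points of the same multiplicities. One checks that this map is an involution of the Picard lattice. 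Applying it to the class $jH-\sum_i a_iE_i$ produces precisely $(j+m)H-\sum_{i\le r+1}(a_i+m)E_i-\sum_{i>r+1}a_iE_i$ with $m=(r-1)j-\sum_{i=1}^{r+1}a_i$, i.e.\ the system $\cL_r(j+m;a_1+m,\dots,a_{r+1}+m,a_{r+2},\dots,a_n)$. The hypothesis $a_i+m\ge 0$ for $i\le r+1$ is exactly what makes the transformed multiplicities nonnegative, so that the right-hand side is a genuine effective fat-point system; the assumption $n>r$ guarantees there are at least $r+1$ points on which to base $\Phi$.

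The step I expect to be the main obstacle is the careful bookkeeping of multiplicities under the monomial substitution $x_i\mapsto\prod_{l\neq i}x_l$: one must determine the exact power of each coordinate dividing $\Phi^*F$, check that after removing these fixed factors the degree drops from $rj$ to $j+m$, that the multiplicity at each base point $P_i$ becomes $a_i+m$, and that the multiplicities at the remaining general points are unchanged. Passing to $X$ is what organizes this, reducing the transformation of the fat-point data to the linear algebra of the displayed Picard action; verifying that this action is an involution---and hence that the substitution is bijective on sections---is then a short computation and completes the argument. This is in essence the content of the results quoted from \cite{Nagata}, \cite{LU} and \cite{Dumnicky}.
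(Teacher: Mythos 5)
The paper does not prove Lemma \ref{lem:Cremona} at all: it is quoted as a known fact with references to \cite{Nagata}, \cite{LU} and \cite{Dumnicky}, so there is no in-paper argument to compare against. Your sketch is essentially the standard proof underlying those references, and the numerology is right: substituting $x_i\mapsto\prod_{l\neq i}x_l$ into a monomial $\prod x_i^{c_i}$ of degree $j$ yields $\prod_l x_l^{\,j-c_l}$, the vanishing conditions force $c_i\le j-a_i$ at the base points, hence $\Phi^*F$ is divisible by $\prod_{i\le r+1}x_i^{a_i}$, and the quotient has degree $rj-\sum_{i\le r+1}a_i=j+m$ and multiplicity at least $a_i+m$ at the $i$-th coordinate point; involutivity of $\Phi$ then makes the map on sections a bijection, and your Picard-lattice computation reproduces exactly the stated transformed system. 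Two points deserve more care than you give them. First, for $r\ge 3$ the standard Cremona map has indeterminacy along all the coordinate linear subspaces of positive dimension, not just at the $r+1$ points, so it does \emph{not} induce an automorphism of the blow-up at the points alone; the displayed ``Picard action'' is only a mnemonic for the transformation of degrees and multiplicities, and the actual proof must run through the monomial substitution (which you do flag as the main step) rather than through ``linear algebra of the Picard action.'' Second, one should say a word about why the image configuration still computes the dimension for \emph{general} points: the $r+1$ fundamental points remain fixed coordinate points and the images of the remaining general points under the fixed birational map are again general, so the transformed configuration is projectively equivalent to a general one. With those two caveats addressed, your argument is a correct and self-contained proof of the lemma that the paper itself only cites.
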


Following \cite{DL}, the linear system $\cL_r (j; a_1,\ldots,a_n)$ is said to be in {\em standard form} if \[
(r-1) j \ge a_1 + \cdots + a_{r+1} \quad \text{and} \quad a_1 \ge \cdots \ge a_n \ge 0.
\]
If $r=2$, then every linear system in standard form is non-special. This is no longer true if $r \ge 3$. However, De Volder and Laface \cite{DL} were able to compute the speciality in the case of at most 8 fat points in $\PP^3$. We state their result only in the form we need it.

\begin{theorem}[\cite{DL}, Theorem 5.3]
  \label{thm:fat-points-P3}
If the linear system $\cL_3 (j; a_1,\ldots,a_6)$ is in standard form, then
\[
\dim_k \cL_3 (j; a_1,\ldots,a_6) = \max \left \{0, \binom{j+r}{r} - \sum_{i=1}^6 \binom{a_i + r-1}{r} \right \} + \sum_{i=2}^6 \binom{t_i + 1}{3},
\]
where $t_i = a_1 + a_i - j$.
\end{theorem}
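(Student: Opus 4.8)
The plan is to rephrase the statement cohomologically and then isolate the speciality. Writing $Z = a_1 P_1 + \cdots + a_6 P_6$ for the fat-point scheme dual to the system, we have $\dim_k \cL_3(j;a_1,\dots,a_6) = h^0(\cI_Z(j))$. Since $Z$ is zero-dimensional and $j \ge 0$, the higher cohomology satisfies $h^2(\cI_Z(j)) = h^3(\cI_Z(j)) = 0$, so $h^0(\cI_Z(j)) = \chi(\cI_Z(j)) + h^1(\cI_Z(j))$, and $\chi(\cI_Z(j)) = \binom{j+3}{3} - \sum_{i=1}^6 \binom{a_i+2}{3}$ is exactly the virtual dimension $v$. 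Thus, in the range $v \ge 0$, the theorem is equivalent to the single assertion
\[
h^1(\cI_Z(j)) = \sum_{i=2}^6 \binom{t_i+1}{3}, \qquad t_i = a_1 + a_i - j,
\]
while the case $v < 0$ follows from the same computation together with the observation that then the residual system is empty and $h^0$ equals the dimension contributed by the forced base locus, namely $\sum_{i=2}^6\binom{t_i+1}{3}$. So everything reduces to computing the speciality.

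Second, I would identify the base locus geometrically. For each $i \ge 2$ with $a_1 + a_i > j$, a Bezout argument on the line $\lambda_{1i} = \overline{P_1 P_i}$ shows that every surface of $\cL_3(j;a_1,\dots,a_6)$ contains $\lambda_{1i}$ with multiplicity at least $t_i = a_1 + a_i - j$: the restriction of a degree-$j$ surface to $\lambda_{1i} \cong \PP^1$ is a binary form of degree $j$ vanishing to orders $\ge a_1$ and $\ge a_i$ at the two points, hence identically zero once $a_1 + a_i > j$, and an induction in the normal directions upgrades this to the multiplicity $t_i$. A direct computation for a single line carrying two fat points (the scheme is arithmetically Cohen--Macaulay, so one reads off its Hilbert function, or equivalently sums the plane contributions $\sum_{s=1}^{t_i-1}\binom{s+1}{2} = \binom{t_i+1}{3}$ over the transversal layers) shows that such a configuration contributes exactly $\binom{t_i+1}{3}$ to $h^1$. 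This is the origin of the summands in the theorem.

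The real content is then to show that these contributions organize into the stated sum and that nothing else is special. I would pass to the blow-up $X \to \PP^3$ of the six points, where the system becomes the complete linear system of $D = jH - \sum a_i E_i$, and resolve the base locus by further blowing up the strict transforms of the base lines $\lambda_{1i}$; after this $D$ becomes globally generated, $h^1$ is supported on the exceptional divisors over the lines, and each line contributes its $\binom{t_i+1}{3}$. To prove the residual system is non-special and to pin down equality, I would run the Horace (specialization) method: degenerate the points onto a general plane $H \cong \PP^2$ and use the Castelnuovo sequence
\[
0 \to \cI_{\mathrm{Res}_H Z}(j-1) \to \cI_Z(j) \to \cI_{\mathrm{Tr}_H Z,\,H}(j) \to 0,
\]
inducting on $j$. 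The trace lives on $\PP^2$, where the dimension of a linear system of plane curves through general fat points is known (a standard-form system on $\PP^2$ is non-special), and the residual is again a six-fat-point system in $\PP^3$ with $j$ and some multiplicities lowered, to which the inductive hypothesis applies; semicontinuity gives one inequality, and choosing the degeneration so that trace and residual are ``Horace-general'' gives the other.

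The step I expect to be the main obstacle is the bookkeeping of the overlapping base lines. Treating the lines one at a time overcounts the speciality -- for instance three heavy points force three mutually incident lines, yet the total speciality equals the contribution of only the two through $P_1$ -- so the star- and triangle-interactions at the common points must be accounted for by a careful inclusion--exclusion, or, on the blow-up, by the intersection theory along the iterated exceptional divisors, and shown to collapse exactly to $\sum_{i=2}^6 \binom{t_i+1}{3}$. The standard-form hypothesis $2j \ge a_1+a_2+a_3+a_4$ is precisely what forces this collapse and keeps the residual non-special, and a finite list of sporadic low-degree base cases (handled directly, or via the known classification of special systems for at most eight points in $\PP^3$) completes the induction.
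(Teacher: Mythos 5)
First, a point of order: the paper does not prove this statement at all --- it is quoted (with adjusted normalization) from De Volder and Laface \cite{DL}, Theorem 5.3, so there is no internal proof to compare yours against. Judged on its own terms, your proposal is a plausible road map rather than a proof. The cohomological reformulation ($h^0(\cI_Z(j)) = v + h^1(\cI_Z(j))$ with $v$ the virtual dimension, reducing the claim to $h^1 = \sum_{i=2}^6\binom{t_i+1}{3}$) is correct in the range $v \ge 0$, and the computation that a single $t$-fold line carrying the two fat points $a_1P_1$, $a_iP_i$ with $t = a_1+a_i-j$ contributes $\binom{t+1}{3}$ is standard and correct. But your disposal of the case $v<0$ is garbled: if ``the residual system is empty'' then $h^0$ would be $0$, not $\sum\binom{t_i+1}{3}$, and in that regime the identity to be proved is $h^1 = \sum_{i=2}^6\binom{t_i+1}{3} - v$, which your reduction does not address.

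The genuine gap is precisely the step you flag and then defer. In standard form the base locus generally contains multiple lines $\lambda_{ik}$ with $2 \le i < k \le 6$ as well: for instance $\cL_3(8;5,5,5,1,0,0)$ satisfies $2j = a_1+a_2+a_3+a_4$, has $t_2 = t_3 = 2$, and also contains $\lambda_{23}$ as a double base line, yet the formula records contributions only from the lines through $P_1$. Proving that the total speciality nevertheless collapses to $\sum_{i=2}^6\binom{t_i+1}{3}$ --- that the star of lines through $P_1$ accounts for everything and the remaining base lines, their pairwise intersections, and the singular points of the configuration contribute nothing further --- is the entire content of the theorem; ``a careful inclusion--exclusion \dots\ shown to collapse exactly'' is a restatement of the problem, not a solution, and it is exactly where the standard-form hypothesis must be used quantitatively. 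Likewise the Horace induction is only named: you specify neither the specialization, nor the trace/residual bookkeeping for points of multiplicity up to $a_1$ (a general plane meets a fat point of multiplicity $a$ in a planar fat point of the same multiplicity, so the trace is itself a possibly special planar system), nor the terminal cases of the induction. As it stands the argument establishes the inequality $\dim_k\cL_3(j;a_1,\ldots,a_6) \ge v + \sum_{i=2}^6\binom{t_i+1}{3}$ coming from the identified base locus, but not the reverse inequality, which is the hard direction.
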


Notice that we always use the vector space dimension of the linear system rather than the dimension of its projectivization and that we adjusted the formula for the expected dimension. Furthermore, we always use the convention that a binomial coefficient $\binom{a}{r}$ is zero if $a < r$.
\smallskip

In this note, we are interested in certain almost complete
intersections.  Then one can compute the right-hand side of
Inequality  \eqref{eq:max-rank-I}.

\begin{lemma}
  \label{lem:expected-Hilb}
Let $I = \langle L_1^{a_1} ,\dots,L_{r+1}^{a_{r+1}} \rangle \subset R$ be an almost complete intersection generated by powers of $r+1$ general linear forms. Then, for each integer $j$,
\[
\dim_k [R/I]_j - \dim_k [R/I]_{j-1} = \big [ h_A (j) - h_A (j- a_{r+1}) \big ]_+ - \big [ h_A (j- 1) - h_A (j - 1- a_{r+1}) \big ]_+ ,
\]
where $A = R/\langle L_1^{a_1} ,\dots,L_r^{a_{r}} \rangle$. Furthermore, if $j \le \frac{1}{2} a_{r+1} + \frac{1}{2} \sum_{i=1}^r (a_i - 1)$,  then the formula simplifies to
\[
\dim_k [R/I]_j - \dim_k [R/I]_{j-1} = \big [ h_A (j) - h_A (j- 1)  \big ] - \big [h_A (j- a_{r+1})  - h_A (j - 1- a_{r+1}) \big ].
\]
\end{lemma}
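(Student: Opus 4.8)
The plan is to set up the complete intersection $A = R/\langle L_1^{a_1},\dots,L_r^{a_r}\rangle$ and then describe $R/I$ as a quotient of $A$ by the single extra generator $L_{r+1}^{a_{r+1}}$. Since the $L_i$ are general and $A$ is a complete intersection of powers of linear forms, the Stanley--Watanabe theorem tells us that $A$ has the Strong Lefschetz Property; in particular, multiplication by $L_{r+1}^{a_{r+1}}$ (a power of a general linear form) has maximal rank between every pair of degrees. I would exploit the short exact sequence of graded $A$-modules
\[
[A]_{j-a_{r+1}} \stackrel{\times L_{r+1}^{a_{r+1}}}{\longrightarrow} [A]_j \longrightarrow [R/I]_j \longrightarrow 0,
\]
which identifies $[R/I]_j$ as the cokernel of the multiplication map. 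By the SLP this map has maximal rank, so its image has dimension $\min\{h_A(j-a_{r+1}),\, h_A(j)\}$, giving
\[
\dim_k [R/I]_j = h_A(j) - \min\{h_A(j-a_{r+1}),\, h_A(j)\} = \big[h_A(j) - h_A(j-a_{r+1})\big]_+ .
\]

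With this closed form for $\dim_k[R/I]_j$ in hand, the first displayed formula in the statement is immediate: I simply substitute the expression for $\dim_k[R/I]_j$ and for $\dim_k[R/I]_{j-1}$ and subtract, yielding
\[
\dim_k[R/I]_j - \dim_k[R/I]_{j-1} = \big[h_A(j)-h_A(j-a_{r+1})\big]_+ - \big[h_A(j-1)-h_A(j-1-a_{r+1})\big]_+ .
\]
The main point requiring care is justifying the maximal-rank claim precisely in the form needed. I would note that $\langle L_1^{a_1},\dots,L_r^{a_r}\rangle$ becomes, after a linear change of coordinates, the monomial complete intersection $\langle x_1^{a_1},\dots,x_r^{a_r}\rangle$, to which Stanley--Watanabe applies directly over a field of characteristic zero, and that the SLP for this algebra is exactly the statement that multiplication by the $a_{r+1}$-th power of a general linear form has maximal rank in every degree.

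For the second, simplified formula, the task is to show that the hypothesis $j \le \tfrac{1}{2}a_{r+1} + \tfrac{1}{2}\sum_{i=1}^r(a_i-1)$ forces both $[\,\cdot\,]_+$ expressions to drop their truncations in the right way, so that the bracketed terms may be regrouped as the difference of two first-differences of $h_A$. Concretely, I would use that $h_A$ is the Hilbert function of a complete intersection, hence symmetric and unimodal, with socle degree $s = \sum_{i=1}^r(a_i-1)$ and peak at $s/2$. The inequality on $j$ is designed exactly to guarantee that in the relevant degree range the first difference $h_A(j)-h_A(j-1)$ is nonnegative (we are on or before the peak) while $h_A(j-a_{r+1}) - h_A(j)$ has a controlled sign, so that $[h_A(j)-h_A(j-a_{r+1})]_+ = h_A(j)-h_A(j-a_{r+1})$ with the truncation inactive. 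I expect the delicate step to be the bookkeeping that confirms, under the stated bound on $j$, that both cokernel dimensions are given by the untruncated differences and that regrouping produces precisely $\big[h_A(j)-h_A(j-1)\big] - \big[h_A(j-a_{r+1})-h_A(j-1-a_{r+1})\big]$; verifying the boundary cases where $j$ meets the bound, and checking that the symmetry point does not cause an off-by-one in which difference is nonnegative, is the portion most likely to hide a subtlety.
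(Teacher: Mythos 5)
Your argument is correct and is essentially the paper's own proof: the paper likewise obtains $\dim_k[R/I]_j=\big[h_A(j)-h_A(j-a_{r+1})\big]_+$ from the SLP of the complete intersection $A$ (Stanley--Watanabe), subtracts consecutive values for the first formula, and drops the truncations for the second using unimodality and symmetry of $h_A$ about its midpoint $\tfrac12\sum_{i=1}^r(a_i-1)$. One small correction to your sketch of the second part: the hypothesis on $j$ does \emph{not} place $j$ on or before the peak (it allows $j$ to exceed the midpoint by up to $a_{r+1}/2$); what it guarantees --- since the midpoint of the pair $j-a_{r+1}$ and $j$ is then at most $\tfrac12\sum_{i=1}^r(a_i-1)$, so symmetry and unimodality apply --- is precisely that $h_A(j)-h_A(j-a_{r+1})\ge 0$ and $h_A(j-1)-h_A(j-1-a_{r+1})\ge 0$, which is all that is needed because the second displayed formula carries plain brackets rather than $[\,\cdot\,]_+$.
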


\begin{proof}
Considering multiplication by $l_{r+1}^{a_{r+1}}$ on $A$, the first equation follows because the complete intersection $A$ has the SLP according to \cite{stanley} or \cite{watanabe}. The latter also implies that the Hilbert function of $A$ is unimodal. Its midpoint is $\frac{1}{2} \sum_{i=1}^r (a_i - 1)$. Thus, the differences in brackets in the first formula are not negative if $j \le \frac{1}{2} a_{r+1} + \frac{1}{2} \sum_{i=1}^r (a_i-1)$, proving the second formula.
\end{proof}

Notice that the Hilbert function of the complete intersection $A$ can be computed using the Koszul complex that provides its minimal free resolution.


\section{Powers of linear forms in four variables} \label{4-variables}

In this section we let $R = k[x_1,x_2,x_3,x_4]$, where $k$ is a field of characteristic zero.  Our main result will be to determine, in almost all cases, when an ideal generated by powers of five general linear forms has the WLP.  To this end, without loss of generality we set $I = \langle x_1^{a_1},x_2^{a_2},x_3^{a_3},x_4^{a_4},L^{a_5} \rangle$, where $L$ is a general linear form and $a_1 \leq a_2 \leq a_3 \leq a_4 \leq a_5$.

\begin{lemma} \label{indep cond}
Let $P_1,\dots, P_6$,  be points in $\mathbb P^2$ in linear general position.  Assign multiplicities $m_1, \dots, m_6 $ respectively to the points, with $0 \leq m_1 \leq \dots \leq m_6$.  (In particular, taking some of the $m_i = 0$ allows us to consider fewer than six points.) Assume that $d \geq m_{5}+m_6$ and that $2d \geq \sum_{i=2}^6 m_i$.  Then the fat point scheme $Z = m_1 P_1 + \cdots + m_6 P_6$ imposes independent conditions on curves of degree $d$.
\end{lemma}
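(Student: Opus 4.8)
The plan is to show that the plane linear system $\cL_2(d; m_1,\ldots,m_6)$ is \emph{non-special} and that its virtual dimension
\[
v := \binom{d+2}{2} - \sum_{i=1}^6 \binom{m_i+1}{2}
\]
is non-negative; the two statements together are exactly the assertion that $Z$ imposes independent conditions, since then $\dim_k \cL_2(d; m_1,\ldots,m_6) = v \ge 0$, so $Z$ imposes the full $\sum_{i=1}^6\binom{m_i+1}{2}$ conditions on forms of degree $d$. I would establish non-speciality by a Cremona reduction to standard form, and $v \ge 0$ by a short direct estimate; I expect the bookkeeping in the reduction to be the main obstacle.

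For non-speciality, reorder the multiplicities decreasingly and repeatedly apply Lemma \ref{lem:Cremona} at the three points of largest multiplicity. If the list is already in standard form I stop; otherwise $m = d - (m_4+m_5+m_6) < 0$, and the transformation replaces the three largest multiplicities $m_6 \ge m_5 \ge m_4$ by $d-m_5-m_4,\ d-m_6-m_4,\ d-m_6-m_5$ and $d$ by $2d-m_4-m_5-m_6$, leaving the other three multiplicities fixed. The hypothesis $d \ge m_5+m_6$ makes the smallest new multiplicity $d-m_5-m_6$ non-negative, so the step is admissible. The crucial point is to show that the pair of conditions ``$d$ dominates the two largest multiplicities'' and ``$2d$ dominates the five largest multiplicities'' is \emph{preserved} by each step. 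A short case analysis, according to which three of the six new multiplicities turn out to be the largest, reduces the preservation of the first condition to $d \ge m_3+m_6$ and $2d \ge \sum_{i=2}^6 m_i$, and that of the second to $d \ge m_2+m_3$ and $2d \ge m_1+m_2+m_3+m_5+m_6$; all four of these follow from the two standing hypotheses together with $m_1 \le \cdots \le m_6$. Granting this, induction shows every step is admissible, and since each non-standard step decreases $\sum_i m_i$ by the positive integer $3\bigl((m_4+m_5+m_6)-d\bigr)$, the process terminates in a system in standard form. This inductive bookkeeping is the part of the argument I expect to require the most care.

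By the recalled fact that for $r=2$ every standard-form linear system is non-special, the reduced system is non-special; and since the Cremona transformation preserves both the actual dimension (Lemma \ref{lem:Cremona}) and the virtual dimension $v$ (the latter being the Riemann--Roch invariant of the associated divisor on the blow-up of $\PP^2$ at the six points), the original system is non-special too, so $\dim_k \cL_2(d; m_1,\ldots,m_6) = \max\{0,v\}$. It then remains to check $v \ge 0$, equivalently $\binom{d+2}{2} \ge \sum_{i=1}^6 \binom{m_i+1}{2}$. Since the right-hand side is convex in $(m_1,\ldots,m_6)$, its maximum over the polytope cut out by $0 \le m_1 \le \cdots \le m_6$, $m_5+m_6 \le d$ and $\sum_{i=2}^6 m_i \le 2d$ is attained at a vertex, and a direct computation gives $v > 0$ at each such vertex. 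Combining $v \ge 0$ with non-speciality yields $\dim_k \cL_2(d; m_1,\ldots,m_6) = v$, which is exactly the claim that $Z$ imposes independent conditions.
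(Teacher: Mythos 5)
Your proof is correct, but it takes a genuinely different route from the paper's. The paper's argument is geometric and very short: under the ordering, the hypotheses $d \ge m_5+m_6$ and $2d \ge \sum_{i=2}^6 m_i$ are precisely the conditions that $dL - \sum m_iE_i$ meet every $(-1)$-curve on the blow-up of $\PP^2$ at six general points non-negatively, so the divisor is nef by Di Rocco's criterion; a regularity bound of Geramita--Harbourne--Migliore then gives $\reg I_Z \le d+1$, hence $h^1(\cI_Z(d))=0$ directly. You instead run the quadratic Cremona reduction (Lemma \ref{lem:Cremona}) to standard form, invoke non-speciality of standard systems in the plane, and separately verify that the virtual dimension $v$ is non-negative. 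Your bookkeeping is right: admissibility of each step is exactly $d \ge m_5+m_6$, both invariants are preserved (your reductions to $d \ge m_3+m_6$, $2d \ge \sum_{i=2}^6 m_i$, $d \ge m_2+m_3$ and $2d \ge m_1+m_2+m_3+m_5+m_6$ check out), $v$ is the Riemann--Roch invariant and so is Cremona-invariant, and the process terminates since $\sum m_i$ drops by at least $3$ at each non-terminal step. What the paper's route buys is that $v \ge 0$ comes for free from $h^1=0$ (since $h^2(\cI_Z(d))=0$, one has $h^0-h^1=v$), whereas in your route $v\ge 0$ is a separate obligation; your convexity-and-vertices argument for it is a sound strategy, but the vertex enumeration is asserted rather than carried out, and that is the one place where you still owe details. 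What your route buys is independence from the nefness criterion and the regularity machinery: it uses only the Cremona lemma and the classical fact (valid for at most nine general points, in particular here) that standard-form planar systems are non-special, both of which the paper already quotes in Section \ref{gen appr}.
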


\begin{proof}
Let $X$ be the rational surface obtained by blowing up  $\mathbb P^2$ at the points $P_i$.  Let $L, E_1,E_2,E_3,E_4,E_5,E_6$ be the standard basis of the divisor class group of $X$, i.e. $L$ is the pullback of the class of a line in $\mathbb P^2$ and $E_1,\dots,E_6$ are the exceptional divisors.  Under the stated assumptions $d \geq m_5 +m_6$ and $2d \geq \sum_{i=2}^6 m_i$, the divisor $dL - m_1E_1 - m_2 E_2 - m_3 E_3 - m_4 E_4-m_5E_5-m_6E_6$ is numerically effective (nef) (cf.\ Theorem 3.4 of \cite{dirocco}).  Then Theorem 2.3 and Remark 2.4 of \cite{GHM} show that the Castelnuovo-Mumford regularity of $I_Z$ is $\leq d+1$.  This implies the claimed result.
\end{proof}

\begin{theorem} \label{four var even}
Let $L$ be a general linear form and let $I = \langle x_1^{a_1},x_2^{a_2},x_3^{a_3},x_4^{a_4},L^{a_5} \rangle$.
Assume that $a_1 + a_2 + a_3 + a_4$ is even.  Let $\lambda = \frac{a_1+a_2+a_3+a_4}{2} -2$.
\begin{itemize}
\item[(i)] If $a_5 \geq \lambda$ then the ring $R/I$ has the WLP.

\item[(ii)] If $a_5 < \lambda$ and $a_1+a_4 \geq a_2+a_3$ then $R/I$ fails the WLP from degree $\lambda-1$ to degree~$\lambda$.

\item[(iii)] If $a_5 < \lambda$, $a_1+a_4 < a_2+a_3$ and $2a_5 +a_1 -a_2 -a_3 -a_4 \geq 0$ then $R/I$ fails the WLP from degree $\lambda-1$ to degree $\lambda$.
\end{itemize}
The hypotheses of (ii) and of (iii) both force $a_1 \geq 3$.
\end{theorem}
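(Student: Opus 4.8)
The plan is to convert the Weak Lefschetz question, degree by degree, into a statement about a plane linear system and then read off the answer from the base locus of that system. By the exact sequence and criterion \eqref{eq:max-rank-I}, $R/I$ has the WLP in degree $m$ if and only if
\[
\dim_k \cL_2(m; m-a_1+1,\dots,m-a_5+1) = \max\{\dim_k[R/I]_m - \dim_k[R/I]_{m-1},\, 0\},
\]
where the left-hand side equals $\dim_k[R/(I,\ell)]_m$ by Proposition \ref{reduce proj sp}, and the right-hand side is computable from the Hilbert function of the complete intersection $A = R/\langle x_1^{a_1},\dots,x_4^{a_4}\rangle$ by Lemma \ref{lem:expected-Hilb}. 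Since $A$ is a complete intersection, its Hilbert function is symmetric and unimodal with socle degree $2\lambda$ and peak at $\lambda$. The first step I would carry out is to show that maximal rank is automatic for every $m\ne\lambda$ (the multiplication map being forced injective below the peak and surjective above it, via non-speciality at those degrees), so that the whole problem concentrates at $m=\lambda$, where the five general fat points carry multiplicities $\mu_i = \lambda - a_i + 1$.

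The geometric heart is the exact evaluation of $\dim_k\cL_2(\lambda;\mu_1,\dots,\mu_5)$. I would pass to the blowup $X$ of $\PP^2$ at the five general points, with the class $D = \lambda L - \sum_{i=1}^5 \mu_i E_i$, and locate the $(-1)$-curves forced as fixed components: the lines $L - E_i - E_j$ through pairs and the conic $2L - E_1 - \cdots - E_5$ through all five. A direct computation gives $(2L - \sum E_i)\cdot D = 2\lambda - \sum\mu_i = a_5 - \lambda - 1$, so the conic enters the base locus precisely as $a_5$ drops below $\lambda$ — exactly the dichotomy separating (i) from (ii) and (iii). Likewise the line through $P_i,P_j$ is forced iff $\mu_i+\mu_j>\lambda$, i.e. iff $a_i+a_j<\tfrac12(a_1+a_2+a_3+a_4)$; thus $a_1+a_4 \gtrless a_2+a_3$ is precisely the condition deciding whether the balanced line through $P_1$ and $P_4$ is forced, which is why it splits (ii) from (iii). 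After stripping all forced $(-1)$-curves I would bring the residual to standard form by the Cremona moves of Lemma \ref{lem:Cremona} and invoke non-speciality of standard-form systems in $\PP^2$ to pin down the dimension.

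For part (i), where $a_5\ge\lambda$, the conic is not a fixed component, and after removing the forced lines the residual imposes independent conditions — this is exactly the role of Lemma \ref{indep cond} — so $\dim_k\cL_2(\lambda;\mu_\bullet)$ equals $\max\{v,0\}$ with $v=\binom{\lambda+2}{2}-\sum_i\binom{\mu_i+1}{2}$, and a Koszul computation identifies $\max\{v,0\}$ with the maximal-rank prediction of Lemma \ref{lem:expected-Hilb}; the forced lines cancel on both sides, giving the WLP. For parts (ii) and (iii), where $a_5<\lambda$, the conic is a genuine fixed component whose extra multiplicity produces a strict excess, so $\dim_k\cL_2(\lambda;\mu_\bullet) > \max\{\dim_k[R/I]_\lambda - \dim_k[R/I]_{\lambda-1},0\}$ and $\times\ell$ fails maximal rank from degree $\lambda-1$ to $\lambda$. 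The two failure cases correspond to the two base-locus configurations selected by $a_1+a_4\ge a_2+a_3$ versus $a_1+a_4<a_2+a_3$; in (iii) the auxiliary hypothesis $2a_5+a_1-a_2-a_3-a_4\ge 0$ is what keeps the residual multiplicities (equivalently, the strippable multiplicity of the conic) non-negative, so the computation stays valid. The stated consequence $a_1\ge 3$ then follows by a short inequality chase: combining $a_5<\lambda$ with $a_4\le a_5$ in case (ii), and with $2a_5+a_1\ge a_2+a_3+a_4$ in case (iii), forces $a_1>2$.

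I expect the main obstacle to be the exact determination of $\dim_k\cL_2(\lambda;\mu_\bullet)$ in each regime: the careful bookkeeping of which $(-1)$-curves sit in the base locus and with what multiplicities, together with the verification that the Cremona-reduced residual is non-special or empty. Part (i) is the most delicate, since the WLP must be confirmed in every degree and across all sub-configurations of forced lines, whereas for (ii) and (iii) it suffices to exhibit the single excess at degree $\lambda$, with (iii) additionally relying on the positivity hypothesis to license the stripping.
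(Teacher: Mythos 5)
Your treatment of parts (ii) and (iii) is essentially the paper's argument: reduce the question in degree $\lambda$ to $\dim_k \cL_2(\lambda;\lambda-a_1+1,\dots,\lambda-a_5+1)$, detect the conic through the five points and the lines $L_{ij}$ as fixed components (your intersection-number computations $a_5-\lambda-1$ for the conic and the criterion $a_i+a_j<\lambda+2$ for the lines agree exactly with the paper's Bezout bookkeeping and its $A_{ij}=[\lambda+2-a_i-a_j]_+$), strip them, evaluate the residual, and compare against the Koszul/SLP prediction of Lemma \ref{lem:expected-Hilb}; the paper confirms the excess is exactly $\binom{\lambda+1-a_5}{2}\geq 1$. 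Two points of genuine divergence are worth noting. First, for the residual system the paper does not Cremona-reduce to standard form; it observes that after stripping, the four remaining points all carry the \emph{same} multiplicity ($a_5-a_4$ in case (ii), $a_5-\tfrac{-a_1+a_2+a_3+a_4}{2}$ in case (iii)) on curves of exactly twice that degree, and applies the nefness/regularity Lemma \ref{indep cond}; your route via Lemma \ref{lem:Cremona} and non-speciality of standard-form planar systems is a legitimate substitute. Second, and more substantively, the paper does \emph{not} prove part (i) through the fat-point dictionary at all: it uses the surjection $R/J\twoheadrightarrow R/I$ with $J=\langle x_1^{a_1},\dots,x_4^{a_4}\rangle$ to get surjectivity above the peak, the coincidence $[R/I]_t=[R/J]_t$ for $t<a_5$ to get injectivity below it, and, in the borderline case $a_5=\lambda$, the fact that $\lambda$-th powers of linear forms span $[R]_\lambda$. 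Your plan to settle degree $\lambda$ in case (i) by the same linear-system computation runs into the hypothesis $j\geq\max\{a_i\}+1$ of Theorem \ref{thm:inverse-system}, which fails at $j=\lambda$ precisely when $a_5\geq\lambda$; one would have to extend the duality to that range (dropping the points with non-positive residual multiplicity and treating $\mu_5=1$ as a simple point) and then redo the base-locus case analysis without the conic. That is repairable but is real extra work, and the paper's elementary argument for (i) avoids it entirely. With that caveat addressed, and with the final numerical identity actually carried out rather than asserted, your outline reproduces the theorem.
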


\begin{proof}
Let $\ell$ be a general linear form.  Letting $J = \langle x_1^{a_1},x_2^{a_2},x_3^{a_3},x_4^{a_4} \rangle$, consider the commutative diagram
 \[
 \begin{array}{cccccccccc}
 [R/J]_t & \stackrel{\times \ell}{\longrightarrow} & [R/J]_{t +1} \\
 \downarrow && \downarrow \\

 [R/I]_t & \stackrel{\times \ell}{\longrightarrow} & [R/I]_{t +1}
  \end{array}
  \]
  (where the vertical arrows are the natural restriction).  If we know that the multiplication on the top row is surjective, then we immediately conclude surjectivity on the bottom row.   Notice that $2\lambda$ is the socle degree of the artinian complete intersection \linebreak $R/(x_1^{a_1},x_2^{a_2},x_3^{a_3},x_4^{a_4})$, so $\lambda$ is the midpoint.

 First assume that $a_5 > \lambda$.  Then clearly $R/I$ and $R/J$ coincide in degrees $\leq \lambda$, so we have injectivity (by the Stanley-Watanabe result) for $(\times \ell) : [R/I]_t \rightarrow [R/I]_{t+1}$ for all $t \leq \lambda-1$.  When $t \geq \lambda$ we have surjectivity for $R/J$, so by the above result we also have it for $R/I$.

Now assume that $a_5 = \lambda$.  We wish to show that $R/I$ has the WLP.  Again surjectivity is immediate for $t \geq \lambda$, and injectivity is immediate for $t \leq \lambda-2$.  When  $t = \lambda-1$ we consider two cases.  If $\dim [R/J]_{\lambda-1} = \dim [R/J]_\lambda$ then by Stanley-Watanabe we have surjectivity for $ (\times \ell) : [R/J]_{\lambda-1} \rightarrow [R/J]_\lambda$, so the same holds for $R/I$.  If $\dim [R/J]_{\lambda-1} < \dim [R/J]_\lambda$, the image of $[R/J]_{\lambda-1}$ in $[R/J]_\lambda$ under multiplication by $\ell$ is not surjective.  Hence the vector space $[\langle x_1^{a_1},x_2^{a_2},x_3^{a_3},x_4^{a_4}, \ell \rangle]_\lambda$ is not all of $R_\lambda$.  But in characteristic zero the $\lambda$-th powers of linear forms span $[R]_\lambda$.  Thus for a general $L$, the image of $L^\lambda$ in $[R/J]_\lambda$ is outside the image of $[R/J]_{\lambda-1}$ in $[R/J]_\lambda$.  Thus $(\times \ell) : [R/I]_{\lambda-1} \rightarrow [R/I]_\lambda$ is injective, since it is for $R/J$ in that degree.  This completes the proof of (i).
\smallskip

We now prove (ii).  Our assumptions now are that $a_1+a_2+a_3+a_4$ is even, $a_5 < \lambda$, and $a_1+a_4 \geq a_2+a_3$.  We first note that the hypotheses force $a_1 \geq 3$, since if $a_1 = 2$ then  $a_5 < \lambda = \frac{a_2+a_3+a_4-2}{2} \leq \frac{2 + 2a_4 -2}{2} = a_4$.

We will show that the multiplication by $\ell$ fails to have maximal rank from degree $\lambda-1$ to degree $\lambda$, although sometimes it is a failure of injectivity and sometimes it is a failure of surjectivity.   (See Example \ref{sometimes inj, surj}.)

As mentioned in Section \ref{gen appr}, the task is to show that
\begin{equation}\label{to show}
\dim [R/(I,\ell)]_\lambda >  [\dim[R/I]_\lambda - \dim[R/I]_{\lambda-1}]_+.
\end{equation}
We will compute the left-hand side using the approach of Section \ref{gen appr} and the right-hand side using Lemma \ref{lem:expected-Hilb}.

By Proposition \ref{reduce proj sp} and Theorem \ref{thm:inverse-system}, we have
\begin{eqnarray}
   \label{desired dim}
\lefteqn{
  \dim_k [R/(I,\ell)]_{\lambda} } \nonumber \\
& = &
\dim_k [\qq_1^{\lambda-a_1+1} \cap \qq_2^{\lambda-a_2+1} \cap \qq_3^{\lambda-a_3+1} \cap \qq_4^{\lambda-a_4+1} \cap \qq_5^{\lambda-a_5+1} ]_\lambda \\[1ex]
& = & \dim_k {\mathcal L}_2(\lambda; \lambda-a_1+1,\lambda-a_2+1,\lambda-a_3+1,\lambda-a_4+1,\lambda-a_5+1). \nonumber
\end{eqnarray}
Notice that
\[
\lambda - a_1 + 1 \geq \dots \geq \lambda-a_5+1 \geq 2.
\]
The vector space defines a linear system, and we want to find its dimension.  The first step is to understand the one-dimensional base locus, which has components of degree 2 and of degree 1.  We will use Bezout's theorem to formally reduce the degree of the polynomials and the order of vanishing at the points, without changing the dimension of the linear system.  If the result has dimension zero then (\ref{desired dim}) is also zero.

Let $F_\lambda \in {\mathcal L}_2(\lambda; \lambda-a_1+1,\lambda-a_2+1,\lambda-a_3+1,\lambda-a_4+1,\lambda-a_5+1) $.  There is a unique quadratic polynomial $F_2$ vanishing at the five general points.  By Bezout's theorem, if $F_2$ is not a factor of $F_\lambda$ then it intersects $F_\lambda$ with multiplicity $2\lambda$.  On the other hand, considering the multiplicities at the five points, we get that the curves meet with multiplicity at least $5\lambda- \sum_{i=1}^5 a_i + 5$.  But
\[
5\lambda- \sum_{i=1}^5 a_i + 5 \leq 2\lambda \ \ \Longleftrightarrow \ \  a_5 \geq \frac{a_1+a_2+a_3+a_4}{2} -1 = \lambda+1,
\]
 a contradiction.  Hence $F_2$ is a factor of $F_\lambda$.  We next want to know what power of $F_2$ divides $F_\lambda$.  Thus we want to know the smallest $i$ for which
\[
2(\lambda-2i) \geq (\lambda-a_1+1-i) + \cdots + (\lambda-a_5+1-i),
\]
and this is equivalent to $i \geq \lambda-a_5+1 $.  Thus we have
\[
F_\lambda = F_2^{\lambda-a_5+1} \cdot F_{2a_5-\lambda-2}
\]
where $F_{2a_5 - \lambda -2} \in {\mathcal L}_2(2a_5-\lambda-2; a_5-a_1,a_5-a_2,a_5-a_3,a_5-a_4)$.  (Notice that now there are only four points, and some of these multiplicities might even be zero.)

Now we consider linear factors coming from the lines joining two of these four points.  There are six such lines; we denote by $L_{ij}$, $1\le i <j\le 4$, the line (as well as the linear form) passing through the points with multiplicities $a_5 - a_i$ and $a_5-a_j$.  Notice that if $a_5 - a_i > 2a_5 - \lambda -2$ then there are no such forms $F_{2a_5 - \lambda-2}$, so the desired dimension is zero.

Arguing in a similar manner as above, we obtain that if $(a_5 - a_i) + (a_5 - a_j) > 2a_5 - \lambda -2$ then $L_{ij}$ appears as a factor
\[
(a_5 - a_i) + (a_5-a_j) - (2a_5 - \lambda -2) = \lambda +2-a_i-a_j
\]
times.  Thus, letting
\[
A_{ij} = [\lambda+2-a_i-a_j]_+ =  \max \{\lambda+2-a_i-a_j ,0 \},
\]
we see that (formally)
\begin{equation} \label{both cases}
F_\lambda = F_2^{\lambda-a_5+1} \cdot \prod_{1 \leq i<j\leq 4} L_{ij}^{A_{ij}} \cdot G, \ \ \ \hbox{ where } \deg G = 2a_5 - \lambda -2 - \sum_{1 \leq i < j \leq 4} A_{ij}.
\end{equation}
Notice that
\begin{equation}\label{constraint}
\hbox{if } 2a_5 - \lambda - 2 < \sum_{1 \leq i < j \leq 4} A_{ij},  \hbox{ then }  \dim[\langle x_1^{a_1},x_2^{a_2},x_3^{a_3},x_4^{a_4},L^{a_5},\ell \rangle ]_{\lambda} = 0.
\end{equation}

Now, since we have assumed $a_1 + a_4 \geq a_2 + a_3$, observe that
\begin{equation} \label{pos neg}
\begin{array}{rclcrcl}
a_1+a_4-a_2-a_3 & \geq & 0 \hbox{\hspace{1.5cm}} & a_2 +a_3 - a_1 - a_4 & \leq & 0 \\
a_2+a_4-a_1-a_3 & \geq & 0 \hbox{\hspace{1.5cm}} & a_1 +a_3 - a_2 - a_4 & \leq & 0 \\
a_3+a_4-a_1-a_2 & \geq & 0 \hbox{\hspace{1.5cm}} & a_1 +a_2 - a_3
- a_4 & \leq & 0 .
\end{array}
\end{equation}
We claim that after removing the one-dimensional base locus, we obtain a set of $\leq 4$ fat points of uniform multiplicities (possibly 0).  We have already seen that after removing the powers of $F_2$ we are left with the problem of finding the dimension of the forms of degree $2a_5 - \lambda - 2$ passing through four general fat points with multiplicity $a_5 - a_1, \dots, a_5-a_4$.  To compute this, we have to determine precisely what is left when we remove the lines.  At each of the four points we compute the multiplicity of the fat point that remains after we remove the (multiple) lines passing through it in the base locus (which do not contribute to the desired dimension).
\begin{eqnarray*}
\lefteqn{a_5 - a_4 - \sum_{i=1,2,3} [\lambda+2-a_i-a_4]_+  } \\[1ex]
&  = &  a_5 - a_4 \hbox{$ - \left [ \frac{-a_1 + a_2 + a_3 - a_4}{2} \right ]_+ - \left [ \frac{a_1 - a_2 + a_3 - a_4}{2} \right ]_+ - \left [ \frac{a_1 + a_2 - a_3 - a_4}{2} \right ]_+ $}\\[1ex]
&  = & a_5 - a_4.
\end{eqnarray*}
Similarly,
\[
\begin{array}{rcl}
a_5 - a_3 - \sum_{i=1,2,4} [\lambda+2-a_i-a_3]_+  & = & a_5 - a_4 \\ \\
a_5 - a_2 - \sum_{i=1,3,4} [\lambda+2-a_i-a_2]_+  & = & a_5 - a_4 \\ \\
a_5 - a_1 - \sum_{i=2,3,4} [\lambda+2-a_i-a_1]_+  & = & a_5 - a_4 .\\ \\
\end{array}
\]
Concluding, we want to find the dimension of the vector space of forms of degree
\[
2a_5 - \lambda - 2 - \sum_{1 \leq i < j \leq 4} A_{ij} = 2(a_5-a_4)
\]
 passing through four points with multiplicity $a_5-a_4$.  By Lemma \ref{indep cond}, this dimension is $(a_5-a_4+1)$ (which in particular is at least 1).  That is,
 \begin{equation}\label{value case 1}
 \dim[\langle x_1^{a_1},x_2^{a_2},x_3^{a_3},x_4^{a_4},L^{a_5},\ell \rangle ]_{\lambda} = a_5-a_4+1 \geq 1.
\end{equation}

Using Lemma \ref{lem:expected-Hilb}, we now compute the ``expected'' dimension, i.e. the right-hand side of (\ref{to show}).  Let $A = R/J$, where $J = \langle x_1^{a_1},x_2^{a_2},x_3^{a_3},x_4^{a_4} \rangle$.   In the following, recall also that $\lambda$ is the midpoint of the $h$-vector of $R/J$, so $0 \leq h_{R/I}(\lambda) = h_A(\lambda) - h_A(\lambda-a_5) $.  Observe that $a_4 < a_1 + a_2 + a_3$ since otherwise
\[
a_5 - \lambda \geq a_4 - \lambda = \frac{a_4 - a_1 - a_2 - a_3}{2} + 2 > 0,
\]
contradicting our assumption that $a_5 < \lambda$.  This implies, in particular, that for determining $h_A (\lambda)$ by using the Koszul resolution of $A$, we only need to consider  the free modules up to $\bigoplus_{1\le i<l\le 4}R(-a_i-a_j)$.

From (\ref{pos neg}) we see that only $\lambda - a_2 - a_3 +2$, $\lambda - a_1 - a_3+2$ and $\lambda - a_1 - a_2+2$ can be positive.  Similarly, we have $\lambda - a_5 - a_i +2 \leq 0$ since $a_1 + a_4 \geq a_2 + a_3$.
Then the ``expected dimension" is
\begin{eqnarray*}
\lefteqn{
\big [\dim [R/I]_\lambda - \dim [R/I]_{\lambda-1]} \big ]_+ } \\[1ex]
& = & \big [h_A(\lambda) - h_A (\lambda-a_5) - h_A(\lambda-1) + h_A(\lambda-a_5-1) \big ]_+ \\[1ex]
& = & \left [ \binom{\lambda+2}{2} - \sum_{i=1}^4 \binom{\lambda-a_i +2}{2} + \sum_{1 \leq i < j \leq 3} \binom{\lambda-a_i - a_j +2}{2}  - \binom{\lambda-a_5+2}{2}   \right ]_+.
\end{eqnarray*}
If this is zero then clearly the actual dimension exceeds the expected one and we are done.  If not, one verifies (for example with \cocoa\ \cite{cocoa}) that
\begin{eqnarray*}
\lefteqn{
\dim_k [R/(I,\ell)]_{\lambda}  - \big [\dim [R/I]_\lambda - \dim [R/I]_{\lambda-1]} \big ]_+ } \\
& = & (a_5-a_4+1)  \\
&& - \left (  \binom{\lambda+2}{2} - \sum_{i=1}^4 \binom{\lambda-a_i +2}{2} + \sum_{1 \leq i < j \leq 3} \binom{\lambda-a_i - a_j +2}{2}  - \binom{\lambda-a_5+2}{2}  \right ) \\
& = & \binom{\lambda+1-a_5}{2},
\end{eqnarray*}
and this last binomial coefficient is at least 1.  Thus, in either case we have shown Inequality \eqref{to show}, which completes the proof of (ii).
\smallskip

Finally we prove (iii).  Our assumptions now are that $a_1+a_2+a_3+a_4$ is even, $a_5 < \lambda$, $a_1+a_4 < a_2+a_3$ and $2a_5 + a_1 - a_2 - a_3 -a_4 \geq 0$.  The calculations from (ii) continue to be valid up to (\ref{both cases}) and (\ref{constraint}).  We first note that the hypotheses again force $a_1 \geq 3$ since if $a_1 = 2$ then $a_5 < \lambda = \frac{a_2+a_3+a_4-2}{2} \leq  \frac{2 + 2a_5 - 2}{2} = a_5$.

In our current situation, though, observe that
\begin{equation} \label{plus minus (iii)}
\begin{array}{rclcrcl}
a_3+a_4-a_1-a_2 & > & 0 \hbox{\hspace{1.5cm}} & a_1 +a_2 - a_3 - a_4 & < & 0 \\
a_2+a_4-a_1-a_3 & > & 0 \hbox{\hspace{1.5cm}} & a_1 +a_3 - a_2 - a_4 & < & 0 \\
a_2+a_3-a_1-a_4 & > & 0 \hbox{\hspace{1.5cm}} & a_1 +a_4 - a_2
- a_3 & < & 0 .
\end{array}
\end{equation}
Now we examine the linear system that remains when we remove the one-dimensional base locus.
As in (ii), we obtain a linear system of curves of degree $2a_5 - \lambda - 2 - \sum_{1 \leq i < j \leq 4} A_{ij}$.  Now we compute the order of vanishing at the four points:
\begin{eqnarray*}
\lefteqn{
a_5 - a_4 - \sum_{i=1,2,3} [\lambda+2-a_i-a_4]_+ } \\
& = & a_5 - a_4 - \hbox{$ \left [ \frac{-a_1 + a_2 + a_3 - a_4}{2} \right ]_+ - \left [ \frac{a_1 - a_2 + a_3 - a_4}{2} \right ]_+ - \left [\frac{a_1 + a_2 - a_3 - a_4}{2} \right ]_+ $}\\
& = & a_5 - \frac{-a_1+a_2+a_3+a_4}{2}.
\end{eqnarray*}
By the additional hypothesis $2a_5 + a_1 - a_2 - a_3 -a_4 \geq 0$, this order of vanishing is $\geq 0$.  A similar computation gives that the order of vanishing at the other three points is the same.  Thus Lemma \ref{indep cond} shows that
\[
 \dim[\langle x_1^{a_1},x_2^{a_2},x_3^{a_3},x_4^{a_4},L^{a_5},\ell \rangle ]_{\lambda}   = a_5 - \frac{-a_1+a_2+a_3+a_4}{2} +1 \geq 1.
\]
The computation of the ``expected" dimension is very similar to what we did above.
From (\ref{plus minus (iii)}) we see that only $\lambda - a_1 - a_2 +2$, $\lambda - a_1 - a_3+2$ and $\lambda - a_1 - a_4+2$ can be positive.  We again have $a_4 < a_1 + a_2 + a_3$.  Thus, Lemma \ref{lem:expected-Hilb} provides
\begin{eqnarray*}
\lefteqn{
\big [\dim [R/I]_\lambda - \dim [R/I]_{\lambda-1]} \big ]_+ } \\[1ex]
& = & \big [h_A(\lambda) - h_A (\lambda-a_5) - h_A(\lambda-1) + h_A(\lambda-a_5-1) \big ]_+ \\[1ex]
& = &  \left [ \binom{\lambda+2}{2} - \sum_{i=1}^4 \binom{\lambda-a_i +2}{2} +
 \sum_{i=2}^4 \binom{\lambda-a_1 - a_i +2}{2}  - \binom{\lambda-a_5+2}{2}   \right
 ]_+.
\end{eqnarray*}
As above, if this is zero then we are done.  If not, one  verifies (for example with \cocoa\ ) that
\begin{eqnarray*}
\lefteqn{
\dim_k [R/(I,\ell)]_{\lambda}  - \big [\dim [R/I]_\lambda - \dim [R/I]_{\lambda-1]} \big ]_+ } \\
& = & a_5 - \frac{-a_1+a_2+a_3+a_4}{2} +1  \\
&& - \left (  \binom{\lambda+2}{2} - \sum_{i=1}^4 \binom{\lambda-a_i +2}{2} + \sum_{i=2}^4 \binom{\lambda-a_1 - a_i +2}{2} - \binom{\lambda-a_5+2}{2}  \right ) \\
& = & \binom{\lambda+1-a_5}{2},
\end{eqnarray*}
and this last binomial coefficient is at least 1, establishing Inequality \eqref{to show}.    This completes the proof of (iii).
\end{proof}

\begin{example}\label{sometimes inj, surj}
To illustrate that sometimes it is injectivity that fails and sometimes it is surjectivity, consider the following (produced using \cocoa).

When $a_1 = 5, a_2 = 7, a_3 = 8, a_4 = 10, a_5 = 10$ we get $\dim [R/I]_{12} = 225$, $\dim[R/I]_{13} = 220$, but the image under multiplication by a general linear form has dimension 219.

When $a_1 = 5, a_2 = 7, a_3 = 8, a_4 = 10, a_5 = 12$ we get $\dim [R/I]_{12} = 234$, $\dim[R/I]_{13} = 236$, but the image under multiplication by a general linear form has dimension 233.
\end{example}

Let us now discuss cases that are not covered by  Theorem \ref{four
var even}, still assuming that $a_1 + a_2 + a_3 + a_4$ is even.

\begin{remark}
As above, let  $I = \langle
x_1^{a_1},x_2^{a_2},x_3^{a_3},x_4^{a_4},L^{a_5} \rangle$. Assume
$2a_5 +a_1 -a_2 -a_3 -a_4 < 0$ and $a_1 \ge 3$. Then:
\begin{itemize}
\item[(i)] If $a_1 = 2$ then $R/I$ has the WLP (see Theorem
\ref{thm:wlp-four-variables}).
\item[(ii)] If $a_1 = 3$, then in the following cases,
 $R/I$ fails to have the WLP, and the  failure is in degree
 $\lambda-1$ and it fails by 1.
\begin{itemize}
\item $(3,9,m,m,m)$ for $m \geq 9 $
\item $(3,10,m,m+1,m+1)$ for $m \geq 10$
\item $(3,11,m,m,m+1)$ for $m \geq 11$
\item $(3,11,m,m+2,m+2)$ for $m \geq 11$
\item $(3,12,m,m+1,m+2)$ for $m \geq 12$
\item $(3,12,m,m+3,m+3)$ for $m \geq 12$
\item $(3,13,m,m,m+2)$ for $m \geq 13$
\item $(3,13,m,m+2,m+3)$ for $m \geq 13$
\item $(3,13,m,m+4,m+4)$ for $m \geq 13$
\end{itemize}
(This is shown using arguments as in the proof of Theorem \ref{four var even}. We omit the details.)
\end{itemize}
Furthermore, computations using \cocoa \ suggest:

\begin{itemize}
\item[(iii)] When $a_1 = 3$ and $a_2 \leq 13$, all cases apart from the ones above have the WLP.
\item[(iv)] When $a_1 = 3,4$, some examples have the WLP and others do not.
\item[(v)] When $a_1 \geq 5$, $R/I$ fails the WLP.
\end{itemize}
\end{remark}

We now consider the case where $a_1+a_2+a_3+a_4$ odd.   There are
some interesting differences to Theorem \ref{four var even}.

\begin{theorem} \label{thm:four-var-odd}
Let $L$ be a general linear form and let $I = \langle
x_1^{a_1},x_2^{a_2},x_3^{a_3},x_4^{a_4},L^{a_5} \rangle$. Assume
that $a_1 + a_2 + a_3 + a_4$ is odd.  Let $\lambda =
\frac{a_1+a_2+a_3+a_4-5}{2}$.
\begin{itemize}
\item[(i)] If $a_5 \geq \lambda-1$ then the ring $R/I$ has the WLP.

\item[(ii)] If $a_5 < \lambda -1$ and $a_1+a_4 \ge  a_2+a_3$ then
$R/I$ fails the WLP from degree $\lambda-1$ to degree~$\lambda $.  These hypotheses force $a_1 \geq 5$.

\item[(iii)] If $a_5 < \lambda -1$, $a_1+a_4 <  a_2+a_3$ and
$ 2a_5+3-a_4-a_3-a_2+a_1\geq 0$ then $R/I$ fails the WLP from
degree $\lambda-1$ to degree $\lambda $. These hypotheses force $a_1 \geq 3$.
\end{itemize}
\end{theorem}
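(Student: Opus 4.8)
The plan is to mirror the proof of Theorem~\ref{four var even}, writing $J = \langle x_1^{a_1},x_2^{a_2},x_3^{a_3},x_4^{a_4}\rangle$ and $A = R/J$, the decisive new feature being parity. Since $a_1+a_2+a_3+a_4$ is odd, the socle degree $2\lambda+1$ of the complete intersection $A$ is odd, so by the SLP (hence unimodality) $A$ has two equal peaks $h_A(\lambda) = h_A(\lambda+1)$ rather than a single midpoint. Everything again hinges on the critical map $[R/I]_{\lambda-1}\to[R/I]_\lambda$. For the boundary value $a_5=\lambda-1$ of (i) and throughout (ii) and (iii), the fat-point computation of Proposition~\ref{reduce proj sp} and Theorem~\ref{thm:inverse-system} applies in degree $j=\lambda$, and the heart of the argument is the single identity
\[
\dim_k [R/(I,\ell)]_\lambda - \big[\dim [R/I]_\lambda - \dim [R/I]_{\lambda-1}\big]_+ = \binom{\lambda-a_5}{2}.
\]
This is the odd analogue of the even-case value $\binom{\lambda+1-a_5}{2}$, and it organizes all three parts: the right-hand side is $0$ exactly when $a_5=\lambda-1$, forcing maximal rank, and is at least $1$ when $a_5\le\lambda-2$, since then $\lambda-a_5\ge2$, forcing failure.

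For (i) I would argue as in part (i) of Theorem~\ref{four var even}, through the commutative square comparing multiplication by a general $\ell$ on $A$ and on $R/I$: since $A$ has the SLP, surjectivity of the top row for $t\ge\lambda$ transfers to the bottom row, and injectivity holds wherever $R/I$ and $A$ agree. For $a_5\ge\lambda+2$ the two rings coincide through degree $\lambda+1$ and there is nothing to prove; the values $a_5\in\{\lambda+1,\lambda\}$ are treated one at a time, using that in characteristic zero the $a_5$-th powers of general linear forms span $[R]_{a_5}$ to push the image of $L^{a_5}$ outside the image of the previous graded piece, which forces injectivity (or, when the relevant Hilbert-function difference vanishes, surjectivity directly). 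The only remaining value, $a_5=\lambda-1$, lies exactly on the WLP boundary; there the noncritical degrees are again settled by the square and the span argument, while the critical map is governed by the identity above, which now reads $\binom{\lambda-a_5}{2}=\binom{1}{2}=0$, i.e.\ maximal rank.

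For (ii) and (iii) I would establish the identity by the same base-locus stripping as in Theorem~\ref{four var even}. Writing the left-hand side as $\dim_k \cL_2(\lambda;\lambda-a_1+1,\dots,\lambda-a_5+1)$, the conic $F_2$ through the five points divides a general element, and a B\'ezout count shows it divides it $\lambda-a_5$ times, one fewer than in the even case. This is the crux of the parity difference: after removing $F_2^{\lambda-a_5}$ the fifth point survives with multiplicity $1$, so one lands on a nonuniform scheme of \emph{five} points of degree $2a_5-\lambda$, with multiplicities $a_5-a_i+1$ for $i\le4$ and $1$ at the fifth. Removing the lines $L_{ij}$ with the same exponents $A_{ij}=[\lambda+2-a_i-a_j]_+$ and using the sign pattern \eqref{pos neg} (resp.\ its reversal) leaves $\cL_2(2(a_5-a_4+2);(a_5-a_4+2)^3,a_5-a_4+1,1)$ in case (ii) and $\cL_2(2\mu+1;\mu+1,\mu^3,1)$ with $\mu=\tfrac{1}{2}(2a_5+a_1-a_2-a_3-a_4+3)$ in case (iii). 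Each satisfies the hypotheses of Lemma~\ref{indep cond} with equalities, so their dimensions are $2(a_5-a_4)+4$ and $2\mu+1$ respectively. Computing the expected value from Lemma~\ref{lem:expected-Hilb}, keeping only the Koszul terms allowed by the sign pattern, and subtracting then yields the claimed identity, hence Inequality~\eqref{to show}. The forcing of $a_1\ge5$ in (ii) and $a_1\ge3$ in (iii) comes, as in the even case, from combining $a_5<\lambda-1$ with the balance hypothesis and parity; e.g.\ in (ii), $a_1=4$ would force $a_1+a_4=a_2+a_3$ and thus an even sum.

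I expect the main difficulty to be twofold. First, the residual schemes are genuinely nonuniform and their multiplicities must be tracked case by case, and one must check in each subcase that Lemma~\ref{indep cond} still applies; because its inequalities hold only as equalities there is no slack, so this verification is delicate. Second, there are real boundary phenomena to absorb: when the hypothesis $2a_5+a_1+3-a_2-a_3-a_4\ge0$ in (iii) is an equality one has $\mu=0$ and an extra line through the fifth point enters the base locus, altering the bookkeeping even though the final dimension $2\mu+1$ is unchanged; and the value $a_5=\lambda-1$ in (i) sits precisely on the WLP/non-WLP threshold, where (one checks $\mu=a_1-2\ge0$, so the stripping is still valid) the critical degree must be settled through the fat-point identity rather than the span argument.
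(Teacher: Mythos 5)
Your proposal follows the paper's proof essentially step for step: the same reduction to $\cL_2(\lambda;\lambda-a_1+1,\dots,\lambda-a_5+1)$, the same B\'ezout stripping of $F_2^{\lambda-a_5}$ (correctly one power fewer than in the even case, leaving the fifth point with multiplicity $1$) and of the lines $L_{ij}$ with exponents $A_{ij}=[\lambda+2-a_i-a_j]_+$, the same residual systems $\cL_2(2(a_5-a_4+2);(a_5-a_4+2)^3,a_5-a_4+1,1)$ in (ii) and $\cL_2(2\mu+1;\mu+1,\mu^3,1)$ in (iii), and the same final discrepancy $\binom{\lambda-a_5}{2}$. Parts (ii) and (iii) are correct as outlined; the paper's only extra wrinkle in (iii) is a separate treatment of $a_1+a_4+1=a_2+a_3$ (where the Koszul term $\binom{\lambda-a_1-a_4+2}{2}$ drops out of the expected dimension), and your observation about the boundary $\mu=0$ (where Lemma \ref{indep cond} does not literally apply because $d<m_5+m_6$, though the dimension $2\mu+1=1$ is still correct by inspection) is a genuine subtlety that the paper in fact glosses over.

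The one place where your plan is thinner than the proof requires is the critical case of (i), namely $a_5=\lambda-1$ and $t=\lambda-1$. You dispose of it by asserting that "the identity" specializes to $\binom{\lambda-a_5}{2}=\binom{1}{2}=0$, but that identity is derived under the hypotheses of (ii)/(iii) — in particular $a_5<\lambda-1$ and a fixed sign pattern among the $a_i+a_j$ — and, crucially, for (i) you need an \emph{exact} equality of the actual and expected dimensions, whereas for the failure statements a one-sided inequality suffices. The paper therefore handles $a_5=\lambda-1$ by an explicit three-way case division ($a_1=a_2=a_3=a_4-1$; $a_2+a_3>a_1+a_4$; $a_1+a_4>a_2+a_3$), computing in each case the residual linear system and the Koszul expected value separately and checking that they agree (giving $2a_1-4$, $2a_1-3$ and $a_1+a_2+a_3-a_4-3$ respectively). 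Your formulas do specialize correctly to these three values, so the plan goes through, but that verification is a necessary piece of the proof rather than a corollary of the identity. One further slip: your parity argument that in (ii) "$a_1=4$ would force $a_1+a_4=a_2+a_3$" is not correct (e.g.\ $(a_1,a_2,a_3,a_4)=(4,4,4,5)$ has odd sum and $a_1+a_4>a_2+a_3$); the correct derivation of $a_1\ge 5$ is that $a_2+a_3\le a_1+a_4-1$ by parity, so $\lambda-1\le a_1+a_4-4\le a_4\le a_5$ whenever $a_1\le 4$, contradicting $a_5<\lambda-1$.
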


\begin{proof}
The outline of part of the proof is the same as that for Theorem \ref{four var even}, and we only highlight the differences.  First note that with the hypotheses of (ii), if $a_1 \leq 4$ then we have $a_5 < \lambda-1 = \frac{a_1+a_2 + a_3 + a_4 -7}{2} \leq \frac{2a_1 + 2a_4 - 7}{2} \leq a_4 + \frac{1}{2}$, a contradiction.  Similarly, with the hypotheses of (iii), if $a_1 = 2$ then we have $a_2+a_3+a_4 -5  \leq 2a_5 < 2\lambda-2 = a_2+a_3+a_4-5$, again a contradiction.

Now, for all three parts of the theorem, in the computation of how many powers of the quadratic polynomial $F_2$ contribute to the base locus, the result now is that it is $\lambda - a_5$.  Thus, in the first step we want to compute
\begin{equation} \label{first step}
\dim {\mathcal L}_2 (2a_5 - \lambda; a_5 - a_1 +1, a_5 - a_2+1, a_5-a_3+1,a_5-a_4+1,1).
\end{equation}
In the second step, we obtain that if $(a_5-a_i+1) + (a_5-a_j+1) > 2a_5 - \lambda$ then $L_{ij}$ appears as a factor
\[
(a_5-a_i+1) + (a_5-a_j+1) - (2a_5-\lambda) = \lambda -a_i-a_j+2
\]
times.  Thus we let
\[
A_{ij} = [\lambda+2-a_i-a_j]_+
\]
as before, and formally we have
\begin{equation} \label{both cases odd}
F_\lambda = F_2^{\lambda-a_5} \cdot \prod_{1 \leq i<j\leq 4}
L_{ij}^{A_{ij}} \cdot G, \ \ \ \hbox{ where } \deg G = 2a_5 -
\lambda  - \sum_{1 \leq i < j \leq 4} A_{ij}.
\end{equation}
Notice that
\begin{equation}\label{constraint2}
\hbox{if } 2a_5 - \lambda  < \sum_{1 \leq i < j \leq 4} A_{ij}
\hbox{ then }  \dim[\langle
x_1^{a_1},x_2^{a_2},x_3^{a_3},x_4^{a_4},L^{a_5},\ell \rangle
]_{\lambda} = 0.
\end{equation}

For (i), we want to show that whenever $a_5 \geq \lambda-1$, the multiplication $(\times \ell) : [R/I]_{t} \rightarrow [R/I]_{t+1}$ has maximal rank.  Let $J = \langle x_1^{a_1},x_2^{a_2},x_3^{a_3},x_4^{a_4}\rangle$.  Notice that we have $h_{R/J}(\lambda) = h_{R/J}(\lambda+1)$.  We consider several cases.

\begin{itemize}
\item $t \geq \lambda$.  We know that the multiplication on $R/J$ from degree $t$ to degree $t+1$ is surjective, by the result of Watanabe and Stanley.  Since $R/I$ is a quotient, the same holds for $R/I$.  This holds no matter what $a_5$ is.

\item $t \leq a_5 -2$.  Then $[R/I]{t} = [R/J]_{t}$ and $[R/I]_{t+1} = [R/J]_{t+1}$, so again the result follows trivially.  Notice that as a result of these first two cases, we are done if $a_5 \geq \lambda+1$.

\item $(t,a_5)  = ( \lambda-1, \lambda)$ or $(t,a_5) = (\lambda-2,\lambda-1)$.  We know that the multiplication for $R/J$ is injective in either of these cases, and that $\dim [R/I]_{t} = \dim [R/J]_{t }$ and $\dim [R/I]_{t+1} = \dim [R/J]_{t+1} -1$.  Then we argue exactly as in the case $a_5 = \lambda$ at the beginning of the proof of Theorem \ref{four var even}, using that the $\lambda$-th (resp. $(\lambda-1)$-st) powers of linear forms span $[R]_\lambda$ (resp. $[R]_{\lambda-1}$).  In particular, this completes the argument if $a_5 = \lambda$.

\item $t=\lambda -1$, $a_5 = \lambda-1$.  This case is the most subtle, and we now give  the argument for this case.
\end{itemize}

We claim that the multiplication $(\times \ell) : [R/I]_{\lambda -1} \rightarrow [R/I]_\lambda$ is injective.  To see this, we will show that $\dim [R/I]_{\lambda-1} < \dim [R/I]_\lambda$, and that $\dim [R/(I,\ell)]_\lambda = \dim [R/I]_\lambda - \dim [R/I]_{\lambda-1}$.

First we compute $\dim [R/(I,\ell)]_\lambda$.  The first step (\ref{first step}) now becomes
\[
\dim [R/(I,\ell)]_\lambda = \dim \mathcal L_2(\lambda-2; \lambda-a_1, \lambda-a_2, \lambda-a_3, \lambda-a_4,1) .
\]

We now consider three cases.  First, if $a_1 = a_2 = a_3 = a_4-1$ then (\ref{first step}) becomes
\begin{equation} \label{step 2a}
\dim [R/(I,\ell)]_\lambda = \dim \mathcal L_2(2a_4 - 6; a_4-3, a_4-3,a_4-3,a_4-4,1) = 2a_4 -6 = 2a_1 -4
\end{equation}
thanks to Lemma \ref{indep cond}.

We now assume that we are not in the first case.  We obtain
\[
\begin{array}{rclcrcl}
A_{1,2} & = & \displaystyle \left [\frac{-a_1-a_2+a_3+a_4-1}{2} \right ]_+ \geq 0 \quad
& A_{1,3} & = & \displaystyle \left [ \frac{-a_1 + a_2 - a_3 + a_4-1}{2} \right ]_+ \geq 0 \\ \\
A_{1,4} & = & \displaystyle \left [ \frac{-a_1 + a_2 + a_3 -a_4 -1}{2} \right ]_+  \quad
& A_{2,3} & = & \displaystyle \left [ \frac{a_1 - a_2 -a_2 + a_4-1}{2} \right ]_+ \\ \\
A_{2,4} & = & \displaystyle \left [ \frac{a_1 - a_2 + a_3 -a_4 -1}{2} \right ]_+ = 0 \quad
& A_{3,4} & = & \displaystyle \left [ \frac{a_1 + a_2 - a_3 - a_4-1}{2} \right ]_+ = 0.
\end{array}
\]
Our second case is $a_2+a_3 > a_1+a_4$.  Then $A_{1,2}$, $A_{1,3}$ and $A_{1,4}$ are possibly non-zero.  A calculation shows that we then must have
\begin{equation} \label{step 2b}
\dim [R/(I,\lambda)]_\lambda = \dim \mathcal L_2(2a_1-3; a_1-1, a_1-2, a_1-2, a_1-2,1)  = 2a_1-3.
\end{equation}

Our third case is $a_1+a_4 > a_2+a_3$.  Then $A_{1,2}$, $A_{1,3}$ and $A_{2,3}$ are possibly non-zero.  Another calculation shows that we must have
\begin{equation} \label{step 2c}
\begin{array}{rcl}
\dim[R/(I,\ell)]_\lambda & = & \dim \mathcal L_2(a_1+a_2+a_3-a_4-3; (\lambda-a_4+1)^3, \lambda-a_4,1) \\
& = & a_1+a_2+a_3-a_4-3.
\end{array}
\end{equation}

Now we  compute the expected dimension $\dim [R/I]_\lambda - \dim [R/I]_{\lambda-1}$ (recall $a_5 = \lambda-1$) by using Lemma \ref{lem:expected-Hilb}.

In the first case ($a_1 = a_2 = a_3 = a_4-1$) we use the Koszul complex and easily compute that the expected dimension is $2a_4-6 = 2a_1-4$, agreeing with (\ref{step 2a}).

In the second case and using the observations about which $A_{i,j}$ are positive, the Koszul resolution gives the expected dimension $2a_1-3$, agreeing with (\ref{step 2b}).

In the third case, again using the observations about the $A_{i,j}$ and the Koszul complex, we obtain the expected dimension $a_1+a_2+a_3-a_4-3$, agreeing with (\ref{step 2c}).  Thus when $a_5 = \lambda-1$, $R/I$ has the WLP, concluding the proof of (i).
\smallskip

Now, we prove (ii). Hence, we are assuming that $a_1 + a_4 \ge a_2 +
a_3$. In fact,  since $a_1+a_2+a_3+a_4$ is odd, we actually have
$a_1+a_4>a_2+a_3$. We obtain
\begin{equation} \label{pos neg2}
\begin{array}{rclcrcl}
a_1+a_4-a_2-a_3 & > & 0 \hbox{\hspace{1.5cm}} & a_2 +a_3 - a_1 - a_4 & < & 0 \\
a_2+a_4-a_1-a_3 & > & 0 \hbox{\hspace{1.5cm}} & a_1 +a_3 - a_2 - a_4 & < & 0 \\
a_3+a_4-a_1-a_2 & > & 0 \hbox{\hspace{1.5cm}} & a_1 +a_2 - a_3 -
a_4 & < & 0.
\end{array}
\end{equation}
Therefore, we get
\[
\begin{array}{rcl}
a_5 - a_4 +1 - \sum_{i=1,2,3} [\lambda+2-a_i-a_4]_+  & = & a_5-a_4+1 \\ \\
a_5 - a_3 +1 - \sum_{i=1,2,4} [\lambda+2-a_i-a_3]_+  & = & a_5 - a_4 +2 \\ \\
a_5 - a_2 +1 - \sum_{i=1,3,4} [\lambda+2-a_i-a_2]_+  & = & a_5 - a_4 +2\\ \\
a_5 - a_1 +1 - \sum_{i=2,3,4} [\lambda+2-a_i-a_1]_+  & = & a_5 - a_4 +2 \\ \\
\end{array}
\]
and
\[
2a_5 - \lambda  - \sum_{1 \leq i < j \leq 4} A_{ij} = 2(a_5-a_4+2).
\]
Concluding, we want the dimension of the linear system
$${\mathcal L_2}(2a_5-2a_4+4;a_5-a_4+2,a_5-a_4+2,a_5-a_4+2,a_5-a_4+1,1).$$
By Lemma \ref{indep cond} we  have
\begin{eqnarray*}
\lefteqn{
\dim_k {\mathcal L_2}(2a_5-2a_4+4; (a_5-a_4+2)^3,a_5-a_4+1,1) } \\
& \ge & \dim_k {\mathcal L_2}(2a_5-2a_4+4; (a_5-a_4+2)^3,a_5-a_4+1)-1 \\
& = & \dim_k {\mathcal L_2}(2a_5-2a_4+4; (a_5-a_4+2)^4)+a_5-a_4+1 \\
& = & 2a_5-2a_4+4.
\end{eqnarray*}
This is clearly positive.  Arguing as in the proof of Theorem \ref{four var even}, and using the inequalities (\ref{pos neg2}), we compute the expected dimension and we get
{\small
\[
\begin{array}{l}
[\dim [R/I]_{\lambda }- \dim [R/I]_{\lambda-1]}]_+  =   \left [
\binom{\lambda +2 }{2} - \sum_{i=1}^4 \binom{\lambda-a_i +2}{2} +
\sum_{1\le i <j \le 3} \binom{\lambda-a_j - a_i +2}{2} -
\binom{\lambda-a_5+2}{2} \right ]_+ .
\end{array}
\]
}\noindent If the part inside the brackets is negative then clearly
the actual dimension exceeds the expected one, and we are done.  If
not, a straightforward computation shows that
\begin{eqnarray*}
\lefteqn{
\dim_k [R/(I,\ell)]_{\lambda }-[\dim_k [R/I]_{\lambda } - \dim_k [R/I]_{\lambda-1}] } \\
& \ge &  \left ( 2a_5 - 2a_4 +4 \right )  \\
&&      - \left (  \binom{\lambda+2}{2} - \sum_{i=1}^4
 \binom{\lambda-a_i +2}{2} + \sum_{1\le i < j \le 3} \binom{\lambda-a_j - a_i +2}{2}  - \binom{\lambda-a_5+2}{2}  \right )    \\
& = & \binom{\lambda -a_5}{ 2}  > 0.
\end{eqnarray*}
Thus the actual dimension exceeds the expected dimension,  and this completes the proof of (ii).
\smallskip

(iii)   We break this into two cases:  first we will assume that $a_1+a_4 +3\le a_2+a_3$, and then we will handle the case $a_1+a_4 +1 = a_2+a_3$.

So to begin, our assumptions now are that $a_1+a_2+a_3+a_4$
is odd, $a_5 < \lambda -1$,  $a_1+a_4 +3\le a_2+a_3$ and
$ 2a_5+3-a_4-a_3-a_2+a_1\geq 0$. Hence, we have
\begin{equation} \label{plus minus (iv)}
\begin{array}{rclcrcl}
a_3+a_4-a_1-a_2 & > & 0 \hbox{\hspace{1.5cm}} & a_1 +a_2 - a_3 - a_4 & < & 0 \\
a_2+a_4-a_1-a_3 & > & 0 \hbox{\hspace{1.5cm}} & a_1 +a_3 - a_2 - a_4 & < & 0 \\
a_2+a_3-a_1-a_4 & > & 0 \hbox{\hspace{1.5cm}} & a_1 +a_4 - a_2 -
a_3 & < & 0 .
\end{array}
\end{equation}
Now we examine the linear system that remains when we remove the
one-dimensional base locus. Observe that
\[
\begin{array}{rcl}
a_5 - a_4 +1 - \sum_{i=1,2,3} [\lambda+2-a_i-a_4]_+  & = & a_5+1-  \frac{a_2+a_3+a_4-a_1-1}{2} \\ \\
a_5 - a_3 +1 - \sum_{i=1,2,4} [\lambda+2-a_i-a_3]_+  & = & a_5 +1-  \frac{a_2+a_3+a_4-a_1-1}{2} \\ \\
a_5 - a_2 +1 - \sum_{i=1,3,4} [\lambda+2-a_i-a_2]_+  & = & a_5 +1-  \frac{a_2+a_3+a_4-a_1-1}{2}\\ \\
a_5 - a_1 +1 - \sum_{i=2,3,4} [\lambda+2-a_i-a_1]_+  & = & a_5 +2-  \frac{a_2+a_3+a_4-a_1-1}{2} \\ \\
\end{array}
\]
and
\[
2a_5 - \lambda  - \sum_{1 \leq i < j \leq 4} A_{ij} =
2a_5+4-a_4-a_3-a_2+a_1 =: 2b.
\]
The additional hypothesis $ 2a_5+3-a_4-a_3-a_2+a_1\geq 0$  guarantees that the orders of vanishing are  $\ge 0$. Therefore,
applying Lemma \ref{indep cond}, we obtain
\begin{eqnarray*}
\dim_k  [R/(I,\ell)]_{\lambda }
& = & \hbox{ $ \dim_k {\mathcal L_2}(2b; b, (a_5+1-
 \frac{a_2+a_3+a_4-a_1-1}{2})^3,1) $} \\
& \ge &  2a_5+5-(a_2+a_3+a_4-a_1) -1.
\end{eqnarray*}

Notice that the hypothesis also guarantees that this value is $>0$.
The computation of the ``expected" dimension is very similar to what
we did above. The extra hypothesis $a_1+a_4+3\le a_2+ a_3$ implies
that only  $\lambda - a_1 - a_2 +2$, $\lambda - a_1 - a_3+2$ and
$\lambda - a_1 - a_4+2$ are $> 0$.  We again have $a_4 < a_1 + a_2 +
a_3$.   Thus, Lemma \ref{lem:expected-Hilb} provides
\begin{eqnarray}
  \label{actual value}
\lefteqn{
\big [\dim_k [R/I]_\lambda - \dim_k [R/I]_{\lambda-1]} \big ]_+  } \nonumber \\
& = & \big [h_A(\lambda) - h_A (\lambda-a_5) - h_A(\lambda-1) +
     h_A(\lambda-a_5-1) \big ]_+ \nonumber \\
& = &  \left [ \binom{\lambda+2}{2} - \sum_{i=1}^4 \binom{\lambda-a_i +2}{2} +
 \sum_{i=2}^4 \binom{\lambda-a_1 - a_i +2}{2}  - \binom{\lambda-a_5+2}{2}   \right
 ]_+. \nonumber
\end{eqnarray}
If this number is zero, then we are done. Otherwise,  a
straightforward computation shows that
\begin{eqnarray*}
\lefteqn{
\dim_k [R/(I,\ell)]_{\lambda }-[\dim_k [R/I]_{\lambda } -
\dim_k [R/I]_{\lambda-1}] } \\
& = &  2a_5+4-(a_2+a_3+a_4-a_1) \\
& & - \left (  \binom{\lambda+2}{2} -
\sum_{i=1}^4 \binom{\lambda-a_i +2}{2} +
\sum_{i=2}^4 \binom{\lambda-a_1 - a_i +2}{2}  -
\binom{\lambda-a_5+2}{2}  \right ) \\
& = & \binom{\lambda -a_5}{2} > 0.
\end{eqnarray*}
Thus in either case the actual value of $\dim [R/(I,\ell)]_\lambda$
exceeds the expected dimension.
This
completes the proof of the case $a_1+a_4+3  \leq a_2+a_3$.

Finally, we assume that $a_1+a_4+1 = a_2+a_3$.  We note first that
this assumption actually forces the stronger condition $a_1 \geq 4$,
since $a_4 \leq a_5 < \lambda-1$ implies $7 < a_1+a_2+a_3-a_4 =
2a_1+1$, and hence $a_1 > 3$. The condition   $a_1+a_4+1 = a_2+a_3$
implies that $\lambda-a_1-a_4 +2 = 0$, so the computation of the
expected dimension above can only get smaller, while the computation
of $\dim [R/(I,\ell)]_\lambda$ remains unchanged.  Thus the
difference can only grow, and we again have shown the failure of the
WLP.  This completes the proof of (iii).
\end{proof}

In the previous results we excluded the case $a_1 = 2$ for the most
part.  The reason is that then the algebra does have the WLP.

\begin{theorem}
  \label{thm:wlp-four-variables}
Let $L$ be a general linear form in $R := k[x_1,\ldots,x_4]$,  and
let  $I$ be the ideal $\langle
x_1^{2},x_2^{a_2},x_3^{a_3},x_4^{a_4},L^{a_5} \rangle$ of $R$.  Then
the algebra $R/I$ has the WLP.
\end{theorem}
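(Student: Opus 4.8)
The plan is to reduce to the cases left open by Theorems \ref{four var even} and \ref{thm:four-var-odd} and then to show that, when $a_1 = 2$, the failure mechanism of those results cannot occur. First I would dispose of the large-exponent range: if $a_1 + a_2 + a_3 + a_4$ is even and $a_5 \ge \lambda$, or if it is odd and $a_5 \ge \lambda - 1$, then $R/I$ has the WLP by part (i) of Theorem \ref{four var even} or of Theorem \ref{thm:four-var-odd} respectively. Crucially, the proofs of those parts never use $a_1 \ge 3$, so they apply verbatim when $a_1 = 2$. It therefore remains to treat $a_5 < \lambda$ in the even case and $a_5 < \lambda - 1$ in the odd case. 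A short computation shows that, precisely because $a_1 = 2$, the inequality $a_5 < \lambda$ is equivalent to $2a_5 + a_1 - a_2 - a_3 - a_4 < 0$ in the even case, and $a_5 < \lambda - 1$ is equivalent to $2a_5 + 3 - a_2 - a_3 - a_4 + a_1 < 0$ in the odd case. In particular the hypotheses of parts (ii) and (iii) of both theorems are incompatible with $a_1 = 2$ (each of them forces $a_1 \ge 3$), so these really are the remaining cases and a positive argument must be supplied.

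Fix the complete intersection $A = R/J$ with $J = \langle x_1^{a_1}, x_2^{a_2}, x_3^{a_3}, x_4^{a_4} \rangle$, which has the SLP. For the remaining cases the strategy is to verify that $\times \ell : [R/I]_{m-1} \to [R/I]_m$ has maximal rank in every degree $m$, computing the left-hand side of \eqref{eq:max-rank-I} through the reduction to fat points in $\PP^2$ (Proposition \ref{reduce proj sp}) and the right-hand side through Lemma \ref{lem:expected-Hilb}. Surjectivity in the high degrees comes essentially for free: once $m - 1$ reaches the midpoint of $A$ it follows from the quotient map $A \twoheadrightarrow R/I$ together with the SLP of $A$, and just below the midpoint it follows from the vanishing $\dim_k [R/(I,\ell)]_m = 0$. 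This vanishing is exactly where $a_1 = 2$ is decisive: after stripping the unique conic $F_2$ through the five points to its maximal power, the residual system has degree $2a_5 - \lambda - 2$ (even case) while the point coming from $x_1^{a_1}$ already carries multiplicity $a_5 - a_1 = a_5 - 2$, and $a_5 - 2 > 2a_5 - \lambda - 2$ is equivalent to $a_5 < \lambda$; hence the residual system, and with it $\dim_k [R/(I,\ell)]_m$, is empty. Running the same Bezout bookkeeping for a general degree $m$ shows that the system stays empty down to the degree where $h_{R/I}$ attains its maximum.

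Injectivity in the low degrees is the complementary task. For $m - 1 \le a_5 - 1$ one has $[R/I]_{m-1} = [R/J]_{m-1}$ and $[R/I]_m = [R/J]_m$, so injectivity is inherited from the SLP of $A$; for $m = a_5$ one argues as at the start of the proof of Theorem \ref{four var even}, using that in characteristic zero the $a_5$-th powers of linear forms span $[R]_{a_5}$, so that the class of $L^{a_5}$ escapes the image of multiplication by $\ell$. The intermediate degrees are the heart of the matter: here I would compute $\dim_k [R/(I,\ell)]_m$ by stripping the base locus (the conic $F_2$ and the six lines $L_{ij}$) down to a system of at most four fat points of uniform multiplicity and invoke Lemma \ref{indep cond} to conclude that these points impose independent conditions, so the system has exactly its expected dimension; comparing with the first difference of $h_{R/I}$ provided by Lemma \ref{lem:expected-Hilb} then gives equality in \eqref{eq:max-rank-I}, that is, maximal rank. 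The odd case runs in parallel, with $\lambda$ replaced by its odd-case value and with the minor extra bookkeeping already present in Theorem \ref{thm:four-var-odd}, where the residual system carries an additional simple point of multiplicity one.

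The main obstacle is precisely this intermediate range: one must check that the numerical hypotheses of Lemma \ref{indep cond}, namely $d \ge m_5 + m_6$ and $2d \ge \sum_{i=2}^6 m_i$ for the reduced degree $d$ and multiplicities $m_i$, hold in every such degree, so that no speciality survives. This is exactly the point at which the computations for $a_1 \ge 3$ produced the positive discrepancy $\binom{\lambda + 1 - a_5}{2}$ that is responsible for the failure of the WLP; the content of the theorem is that putting $a_1 = 2$ collapses that discrepancy to zero and, at the same time, forces the reduced multiplicities to be uniform, so that Lemma \ref{indep cond} applies cleanly and maximal rank holds in all degrees.
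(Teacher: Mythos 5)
Your reduction to the cases left open by Theorems \ref{four var even} and \ref{thm:four-var-odd} is correct, and your observation that $a_1=2$ together with $a_5<\lambda$ forces the residual system at degree $\lambda$ to be empty (the point dual to $x_1^2$ carries multiplicity $a_5-2$ exceeding the residual degree $2a_5-\lambda-2$) is a genuine and correct insight. But the proof has a real gap exactly where you flag it: maximal rank must be verified in \emph{every} intermediate degree $m$, not just at $m=\lambda$, and your plan requires redoing the Bezout bookkeeping (power of the conic, powers of the six lines, the hypotheses of Lemma \ref{indep cond}, and the comparison with the Koszul computation of Lemma \ref{lem:expected-Hilb}) for a whole range of $m$ in which the multiplicities $m-a_i+1$ and the base-locus exponents all vary with $m$. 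You assert that ``the system stays empty down to the degree where $h_{R/I}$ attains its maximum'' and that in lower degrees the stripped system is uniform and non-special, but neither claim is checked, and these are precisely the computations that constitute the proof. As written, the argument establishes the result only in the single degree $\lambda$.

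The paper avoids this multi-degree verification by a different decomposition. It treats $x_1^2$, not $L^{a_5}$, as the distinguished generator: setting $J=\langle x_2^{a_2},x_3^{a_3},x_4^{a_4},L^{a_5}\rangle$, the SLP of the complete intersection $R/J$ converts the first difference of $h_{R/I}$ into $h_{R/(J,\ell)}(j)-h_{R/(J,\ell)}(j-2)$, so the whole theorem reduces to Proposition \ref{prop:max-rank-three-var}: multiplication by the \emph{square} of a general linear form has maximal rank on a three-variable almost complete intersection generated by powers of four general linear forms. That proposition in turn uses the Schenck--Seceleanu theorem \cite{ss} to produce the chain \eqref{eq:maps} of injections and surjections, which collapses the infinitely many degrees you would have to check to a single critical degree $p+1$; only there is the fat-points/Bezout computation carried out. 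If you want to complete your own route, you would either have to carry out the degree-by-degree analysis in full, or import something like \eqref{eq:maps} to reduce to one degree --- at which point you have essentially rediscovered the paper's argument.
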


The proof will be based on a result about almost complete
intersections  in three variables generated by powers of four
general linear forms. According to \cite{ss} such an algebra has the
WLP, i.e., multiplication by a general linear form has maximal rank.
We show that this is also true when one multiplies by the square of
such a form.

\begin{proposition}
  \label{prop:max-rank-three-var}
Let $\ell_2,\ldots,\ell_5, \ell$ be five general linear forms of  $S
:= k[x,y,z]$, and let $\fa \subset S$ be the ideal $\langle
\ell_2^{a_2},\ldots,\ell_5^{a_5}\rangle$. Set $A := S/\fa$. Then,
for each integer $j$, the multiplication map $\times \ell^2:
[A]_{j-2} \to [A]_j$ has maximal rank.
\end{proposition}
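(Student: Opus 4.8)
The plan is to convert maximal rank into a Hilbert-function equality and then evaluate it through the inverse-system dictionary. From the exact sequence
\[
[A]_{j-2} \stackrel{\times \ell^2}{\longrightarrow} [A]_j \longrightarrow [S/(\fa,\ell^2)]_j \longrightarrow 0
\]
the map $\times \ell^2$ has maximal rank in degree $j$ precisely when
\[
\dim_k [S/(\fa,\ell^2)]_j = \big[\, h_A(j) - h_A(j-2)\,\big]_+ .
\]
By \cite{ss} the algebra $A$ has the WLP, so $\times \ell$ has maximal rank from each degree to the next; since $A$ is standard graded, surjectivity of $\times\ell$ in one degree propagates upward, and hence $h_A$ is unimodal, nondecreasing on $\{0,\dots,p\}$ and nonincreasing on $\{p,p+1,\dots\}$ for some top degree $p$. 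Factoring $\times \ell^2 = (\times\ell)\circ(\times\ell)$ through $[A]_{j-1}$, a composite of two injections settles every $j \le p$ and a composite of two surjections settles every $j \ge p+2$. Therefore only the peak degree $j = p+1$ remains.

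At the peak, $\times\ell\colon [A]_{p-1}\to[A]_p$ is injective with image $V=\ell[A]_{p-1}$, and $\times\ell\colon[A]_p\to[A]_{p+1}$ is surjective with kernel $K$, so a short computation gives $\rank(\times\ell^2)=h_A(p-1)-\dim_k(K\cap V)$. Because $\dim_k K+\dim_k V-h_A(p)=h_A(p-1)-h_A(p+1)$, maximal rank is equivalent to $K$ and $V$ meeting as transversally as possible, i.e.\ to the single equality $\dim_k[S/(\fa,\ell^2)]_{p+1}=[\,h_A(p+1)-h_A(p-1)\,]_+$. If $p+1<a_5$, then the generator $\ell_5^{a_5}$ does not yet act in degrees $p-1,p,p+1$, so $A$ agrees there with the complete intersection $S/\langle\ell_2^{a_2},\ell_3^{a_3},\ell_4^{a_4}\rangle$, which has the SLP by \cite{stanley,watanabe}; thus $\times\ell^2$ already has maximal rank and this regime is done.

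In the complementary regime $p+1 \ge a_5+1$ I would compute $\dim_k[S/(\fa,\ell^2)]_{p+1}$ directly. Since $(\fa,\ell^2)$ is again generated by powers of five general linear forms, Theorem~\ref{thm:inverse-system} identifies this dimension with $\dim_k \cL_2\big(p+1;\, (p+1)-a_2+1,\dots,(p+1)-a_5+1,\, p\big)$, a system of plane curves with four general fat points together with one further point (dual to $\ell$) of multiplicity $p$. I would then follow the base-locus analysis of the proof of Theorem~\ref{four var even}: B\'ezout forces the unique conic through the five points into the base locus to a computable power, after which the lines through pairs of the residual points are stripped, leaving a system of curves through at most four general fat points of controlled multiplicity whose dimension is supplied by Lemma~\ref{indep cond}. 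Comparing the result with $h_A(p+1)-h_A(p-1)$, computed from the Koszul resolution of $A$ via Lemma~\ref{lem:expected-Hilb}, yields the required equality; indeed the whole computation is formally the $a_1=2$ specialization of the calculation already carried out in Theorem~\ref{four var even}.

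The main obstacle is precisely this peak computation. One must run the B\'ezout bookkeeping uniformly in $(a_2,a_3,a_4,a_5)$, splitting (as in Theorem~\ref{four var even}) according to the sign of the various differences $a_i+a_j-a_k-a_l$ so as to determine exactly which lines enter the base locus and with what multiplicity, and then verify that the stripped configuration satisfies the numerical hypotheses $d\ge m_5+m_6$ and $2d\ge\sum_{i=2}^6 m_i$ of Lemma~\ref{indep cond}. Only once this is checked does the residual system become nonspecial with dimension matching the Koszul count, closing the peak case and hence establishing maximal rank of $\times\ell^2$ in every degree.
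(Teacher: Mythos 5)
Your overall strategy matches the paper's: use the WLP of $A$ from \cite{ss} to reduce to the single peak degree $j=p+1$, and settle that degree by computing $\dim_k[S/(\fa,\ell^2)]_{p+1}$ as a linear system of plane curves with assigned base points and comparing with the Koszul count. But there are two genuine gaps. First, your two regimes do not cover all cases: the complete-intersection comparison needs $p+1<a_5$ (so that $[A]_{p+1}=[S/\langle\ell_2^{a_2},\ell_3^{a_3},\ell_4^{a_4}\rangle]_{p+1}$), while the inverse-system computation, as Theorem \ref{thm:inverse-system} is stated, needs $j=p+1\ge a_5+1$. The boundary case $p+1=a_5$ is left uncovered, and it is not vacuous: for $a_2=\cdots=a_5=d$ with $d\in\{2,3\}$ one has exactly $p+1=a_5$. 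The paper handles this boundary separately (its cases (a3) and (b3)), using the surjection $[B]_j\twoheadrightarrow[A]_j$ in the odd case and, in the even case, the fact that in characteristic zero the $j$-th powers of linear forms span $[S]_j$, so a general $\ell_5^{a_5}$ lies outside the image of $\times\ell^2$; some such argument is needed and is missing from your plan.

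Second, the peak computation that you defer as ``the main obstacle'' is the actual content of the proof, and your guide for carrying it out is off in ways that matter. It is \emph{not} the $a_1=2$ specialization of the calculation in Theorem \ref{four var even}: cases (ii) and (iii) there explicitly force $a_1\ge 3$, and that computation is run at the midpoint $\lambda=\frac{a_1+\cdots+a_4}{2}-2$ of a four-variable complete intersection, whereas here the relevant degree is $p+1$ with $p=\lfloor\frac{a_2+a_3+a_4+a_5-4}{3}\rfloor$. At this degree the B\'ezout bookkeeping is not governed by the signs of $a_i+a_j-a_k-a_l$ nor by lines through pairs of the residual points: writing $a_2+a_3+a_4+a_5=3(p+1)+b$ with $1\le b\le 3$, the conic through the five points enters the base locus with multiplicity $3-b$, and the only lines that enter are the four lines joining the point of multiplicity $p$ (dual to $\ell$) to the other points, each with multiplicity $p+1-a_i$ (here the hypothesis $a_5\le\frac{a_2+a_3+a_4-3}{2}$, equivalently $a_5\le p+1$, is needed so that these multiplicities are the honest values and not truncated at zero). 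The residual system is $\cL_2(3b-6;\,2b-4,(b-2)^4)$, of dimension $0,1,3$ for $b=1,2,3$, matching $[2b-3]_+=[h_A(p+1)-h_A(p-1)]_+$ from the Koszul resolution. Until this computation (or an equivalent one) is actually run, the maximal-rank claim at the peak is unproven.
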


\begin{proof}
If any of the numbers $a_2,\ldots,a_5$ equals one, then $A$ has the
SLP by \cite{HMNW}, so the claim is true. Thus, we may assume $2 \le
a_2 \le \cdots \le a_5$.

First, assume also that
\[
a_5 \le \frac{a_2 + a_3 + a_4 - 3}{2}.
\]
Define integers $p$ and $b$ by
\[
p := \left \lfloor \frac{a_2 + a_3 + a_4 + a_5 - 4}{3}
 \right \rfloor \quad \text{and} \quad  a_2 + a_3 + a_4 + a_5 =
 3 (p+1) + b,
\]
thus $1 \le b \le 3$.

According to \cite{ss}, $A$ has the WLP and $A/\ell A$ has socle
degree $p$. This implies that multiplication by $\ell$ on $A$ is
injective until degree $p$ and  surjective in larger degrees.
Symbolically, this reads as
\begin{equation}
  \label{eq:maps}
[A]_0 \hookrightarrow \cdots \hookrightarrow [A]_{p-1}
\hookrightarrow [A]_p \twoheadrightarrow [A]_{p+1}
\twoheadrightarrow \cdots.
\end{equation}
Hence, to show our claim it suffices to prove that the
multiplication $\times \ell^2: [A]_{p-1} \to [A]_{p+1}$ has maximal
rank, which is equivalent to
\begin{equation}
  \label{eq:max-rank}
\dim_k [A/\ell^2 A]_{p+1} = \max \{0, h_A (p+1) - h_A (p-1)\}.
\end{equation}

In order to see this, we first compute the left-hand side and then
the right-hand side. Using Theorem \ref{thm:inverse-system}, we get
\begin{eqnarray*}
  \dim_k [A/\ell^2 A]_{p+1}  & = & \dim_k [S/\langle \ell^2, \ell_2^{a_2},\ldots,\ell_5^{a_5} \rangle]_{p+1} \\
  &  = &  \dim_k [\fp^{p} \cap \fp_2^{p - a_2 +2} \cap \ldots \cap \fp_5^{p - a_5 +2}]_{p+1} \\
  & = & \dim_k \cL_2 (p+1; \  p,  p - a_2 +2,\ldots,p - a_5 +2),
\end{eqnarray*}
where $\fp, \fp_2,\ldots,\fp_5$ are the homogenous ideals of five
general points in $\PP^2$. Let $Q \in S$ be the unique quadric that
vanishes at these five points. Again, we use Bezout's theorem to
estimate the multiplicity of $Q$ in the base locus of the linear
system  $\cL_2 (p+1; \ p, p - a_2 +2,\ldots, p - a_5 +2)$. For an integer
$j$, the condition
\begin{eqnarray*}
  2 (p+1 - 2j) & \ge & p - j + \sum_{i=2}^5 [p- a_i + 2 - j] \\
  & = & 5 p + 8 - 5 j -   \sum_{i=2}^5 a_i \\
  & = & 5p + 8 - 5 j - [3 (p+1) + b] \\
  & = & 2 p + 5 - 5 j - b
\end{eqnarray*}
is equivalent to $j \ge 3 - b$. It follows that $Q$ appears with
multiplicity at least $3 - b$ in the base locus. Thus, we get
\begin{eqnarray*}
  \dim_k [A/\ell^2 A]_{p+1}  & = &
          \dim_k \cL_2 (p+1; \ p, p - a_2 +2,\ldots,p - a_5 +2 ) \\
   & = &  \dim_k \cL_2 (p+2 b - 5;\  p + b - 3, p + b - a_2 -1,\ldots, p + b - a_5 - 1).
\end{eqnarray*}
The latter linear system is clearly empty if $b = 1$. To compute its
dimension if $2 \le b \le 3$, we consider the lines $L_{i}$, $2 \le
i \le 5$, passing  through the points $\fp$ and $\fp_i$. By Bezout's
theorem, the line $L_i$ appears with multiplicity at least $B_i$ in
the base locus, where
\begin{eqnarray*}
  B_i & := & \left [ p+b - 3 + p + b - a_i - 1 - \{p + 2 b -5\} \right ]_+ \\
  & = & \left [p+1 - a_i \right ]_+.
\end{eqnarray*}
Our assumption $a_5 \le \frac{a_2 + a_3 + a_4 - 3}{2}$ implies $a_5
\leq p+1$. Thus, we get $B_i = p + 1 - a_i$, so
\[
B_2 + \cdots + B_5 = 4 p + 4 - [a_2 + a_3 + a_4 + a_5] = 4 p + 4 - [3 (p+1) + b] = p + 1 - b.
\]
Removing the lines from the base locus we obtain
\begin{eqnarray*}
  \dim_k [A/\ell^2 A]_{p+1}  & = &
    \dim_k \cL_2 (p+2 b - 5;\ p + b - 3, p + b - a_2 -1,\ldots, p + b - a_5 - 1)\\
  & = & \dim_k \cL_2 (3 b - 6;\ 2 b - 4, b-2,\ldots,b-2).
\end{eqnarray*}
It follows that $\dim_k [A/\ell^2 A]_{p+1} = 1$ if $b = 2$.

If $b =3$, then we get, using again Theorem
\ref{thm:inverse-system},
\begin{eqnarray*}
\dim_k [A/\ell^2 A]_{p+1}  & = &
  \dim_k \cL_2 (3 ;\ 2 , 1,\ldots,1) \\
& = & \dim_k [S/\langle \ell^2,\ell_2^3,\ldots,\ell_5^3\rangle]_3 \\
& = & \dim_k [S/\langle \ell^2 \rangle]_3 - 4 \\
& = & 3
\end{eqnarray*}
because the linear forms are general. Summarizing, we have shown so
far that
\begin{eqnarray}
\label{eqn:hilb-func}
\dim_k [A/\ell^2 A]_{p+1}  & = & \left \{
\begin{array}{cl}
  0 & \text{if } b = 1 \\
  1 & \text{if } b = 2 \\
  3 & \text{if } b = 3. \\
\end{array}
\right.
\end{eqnarray}

Now we compute the right-hand side of Equation \eqref{eq:max-rank}.
To this end consider the ring $B := S/\langle \ell_3^{a_3},
\ell_4^{a_4},  \ell_5^{a_5} \rangle$. Observe that
\begin{eqnarray*}
  p+1 - a_3 - a_4  & = & \left \lfloor \frac{a_2 + a_3 + a_4 + a_5 - 1}{3} \right  \rfloor
                         - a_3 - a_4\\
      & \le & \frac{a_2 +  a_5 - 1 - 2 a_3 - 2 a_4}{3} \\
      & \le & \frac{3 a_2 - 5 - 3 a_3 - 3 a_4}{6} \\
      & < & 0,
\end{eqnarray*}
where we used again $a_5 \le \frac{a_2 + a_3 + a_4 - 3}{2}$. Hence,
the Koszul resolution of the complete intersection $B$ provides for
its Hilbert function if $j \le p+1$ that
\begin{equation}
 \label{eq:hilb-ci}
  h_B (j) = \binom{j+2}{2} - \sum_{i=3}^5 \binom{j+2 - a_i}{2},
\end{equation}
where, as above, we define a binomial coefficient $\binom{c}{2}$ to
be zero if $c < 0$. Since the complete intersection $B$ has the SLP
and $A \cong B/\ell_2^{a_2} B$, we get
\[
h_A (p+1) - h_A (p-1) = \left [h_B (p+1) - h_B (p+1 - a_2)  \right ]_+
- \left [h_B (p-1) - h_B (p-1 - a_2) \right ]_+.
\]
One easily checks that our assumptions provide $p+1 \le \left \lceil
\frac{a_3+a_4+a_5-3}{2} \right \rceil$. Since $a_3+a_4+a_5-3$ is the
socle degree of $B$, this implies that $h_B (j-1) \le h_B (j)$ if $j
\le p+1$. Hence the last formula simplifies to
\[
h_A (p+1) - h_A (p-1) = h_B (p+1) - h_B (p+1 - a_2)
- h_B (p-1) + h_B (p-1 - a_2).
\]
The socle degree of $A/\ell A$ is at most the socle degree of
$S/\langle \ell, \ell_2^{a_2},\ell_3^{a_3}\rangle$, thus $p \le a_2
+ a_3 -2$. Combined with Equation \eqref{eq:hilb-ci} and using
$\binom{j+1}{2} - \binom{j-1}{2}$ if $j \ge 1$, this provides
\begin{eqnarray*}
  h_A (p+1) - h_A (p-1) & = & \binom{p+3}{2} - \binom{p+1}{2} -
     \sum_{i=3}^5 \left [ \binom{p+3-a_a}{2} - \binom{p+1-a_i}{2} \right ]  \\
     & &  -
     \left [ \binom{p+3-a_2}{2} - \binom{p+1-a_2}{2} \right ] \\
  & = & - 6 p - 9 + 2 \sum_{i=2}^4 a_i \\
  & = & 2b - 3.
\end{eqnarray*}
Hence, we get
\[
\left [h_A (p+1) - h_A (p-1)  \right ]_+ = \left [  2b-3 \right ]_+ =
\left \{
\begin{array}{cl}
  0 & \text{if } b = 1 \\
  1 & \text{if } b = 2 \\
  3 & \text{if } b = 3. \\
\end{array}
\right.
\]
Comparing with Equation \eqref{eqn:hilb-func}, this establishes the
desired equality \eqref{eq:max-rank}.
\smallskip

It remains to consider the case $a_5 > \frac{a_2 + a_3 + a_4 -
3}{2}$.  Let us call  $\fb:= \langle
\ell_2^{a_2},\ell_3^{a_3},\ell_4^{a_4}\rangle$ and set  $B := S/\fb$. Note that the socle degree of $B$ is $a_2 + a_3 + a_4 -
3$.
 Stanley \cite{stanley} and  Watanabe \cite{watanabe} showed that, in characteristic 0, SLP  holds for an artinian complete intersection generated by powers of linear forms.
  In particular,
for each integer $j$, the multiplication map $\times \ell^2:
[B]_{j-2} \to [B]_j$ has maximal rank. We want to prove that, for each integer $j$, the multiplication map $\times \ell^2:
[A]_{j-2} \to [A]_j$ has maximal rank. To this end, we will distinguish several cases:

(a) Assume $a_2+a_3+a_4$ odd.

(a1) For any $j< a_5$, we have $[A]_j\cong [B]_j$ and hence  $\times \ell^2:
[A]_{j-2} \to [A]_j$ has maximal rank.

(a2) For any $j\ge a_5+1$ or $j=a_5$ and $a_5 > \frac{a_2 + a_3 + a_4 -
3}{2}+1$, we have $j-2\ge \frac{a_2 + a_3 + a_4 -
3}{2}$. Hence, the surjectivity $\times \ell^2:
[B]_{j-2}\twoheadrightarrow  [B]_j$  together with the commutative diagram

 \[
 \begin{array}{cccccccccc}
 [B]_{j-2} & \twoheadrightarrow & [A]_{j-2} \\
 \downarrow && \downarrow \\

 [B]_{j} & \twoheadrightarrow & [A]_{j}
  \end{array}
  \]
allows us to conclude that $\times \ell^2:
[A]_{j-2} \to [A]_j$ has maximal rank.

(a3) For $j=a_5=  \frac{a_2 + a_3 + a_4 -
3}{2}+1$ we have $[A]_{j-2}\cong [B]_{j-2} \cong [B]_j \twoheadrightarrow [A]_j$. Therefore, we also conclude that  $\times \ell^2:
[A]_{j-2} \to [A]_j$ has maximal rank.

(b) Assume $a_2+a_3+a_4$ even.

(b1) For any $j< a_5$, we have $[A]_j\cong [B]_j$ and hence  $\times \ell^2:
[A]_{j-2} \to [A]_j$ has maximal rank.

(b2)  It is identical to (a2).

(b3) For $j=a_5=  \frac{a_2 + a_3 + a_4 -
2}{2}$ we have to prove that $\times \ell^2:
[A]_{j-2} \to [A]_j$ is injective. Since $\dim [B]_{j-2} < \dim [B]_j$, the image of $[B]_{j-2}$ in $[B]_j$ under multiplication by $\ell ^2$ is not surjective.  Hence the vector space $[\langle \ell _2^{a_2},\ell _3^{a_3},\ell _4^{a_4}, \ell ^2 \rangle]_j$ is not all of $S_j$.  But in characteristic zero the $j$-th powers of linear forms span $[S]_j$.  Thus for a general $L$, the image of $L^j=L^{a_5}$ in $[B]_j$ is outside the image of $[B]_{j-2}$ in $[B]_j$.  Thus $(\times \ell^2) : [A]_{j-2} \rightarrow [A]_j$ is injective, since it is for $B$ in that degree.  This completes the proof of the proposition.


\end{proof}

\begin{remark}
  \label{rem:mult-higher-powers}
Observe that extensions of Proposition \ref{prop:max-rank-three-var}   to multiplications by higher powers of $\ell$ fail in general. There are many such examples.  The smallest is when $a_2 = \cdots = a_5 = 3$, for which multiplication by $\ell^3$ fails to have maximal rank from degree 1 to degree 4. It is easy to see that this extends to the case $a_2 = \cdots = a_5 = d$, for which multiplication by $\ell^d$ fails to have maximal rank from degree $d-2$ to degree $2d-2$.  Many more complicated examples (produced by \cocoa) exist as well.
\end{remark}

We are ready for the proof of Theorem \ref{thm:wlp-four-variables}.

\begin{proof}[Proof of Theorem \ref{thm:wlp-four-variables}]
If one of the numbers $a_2,\ldots,a_5$ is one, then the result
follows  by \cite{ss}. Thus, we may assume $2 \le a_2 \le \cdots \le
a_5$.

If $a_5 > \frac{a_2 + a_3 + a_4 - 3}{2}$, then $R/I$ has the WLP by
Theorems \ref{four var even}(i) and \ref{thm:four-var-odd}(i). Thus,
we may assume $a_5 \le \frac{a_2 + a_3 + a_4 - 3}{2}$ for the
remainder of the proof. Let $\ell \in R$ be another general linear
form. We have to show that for all integers $j$,
\begin{equation}
  \label{eq:wlp-condition}
\dim_k [R/ (I, \ell)]_j = \max \{0, h_{R/I} (j) - h_{R/I} (j-1)\}.
\end{equation}

To this end, consider the ideal $J := \langle
x_2^{a_2},x_3^{a_3},x_4^{a_4},L^{a_5} \rangle \subset R$. The
complete intersection $R/J$ has the SLP, which implies
\[
\left [ h_{R/I} (j) - h_{R/I} (j-1) \right ]_+ = \left [
\left [ h_{R/J} (j) - h_{R/J} (j-2)  \right ]_+
 - \left [ h_{R/J} (j-1) - h_{R/J} (j-3) \right ]_+
\right ]_+.
\]
The complete intersection $R/J$ has socle degree $a_2+\cdots+a_5 - 4
\ge 4$, thus the multiplication $\times \ell: [R/J]_{j-1} \to
[R/J]_j$ is injective if $j \le \left \lceil \frac{a_2 + a_3 + a_4 +
a_5 - 4}{2} \right \rceil$. One checks that
\[
\left \lceil \frac{a_2 + a_3 + a_4 + a_5 -
4}{2} \right \rceil \ge \left \lfloor \frac{a_2 + a_3 + a_4 + a_5 - 1}{3} \right
\rfloor =: p + 1.
\]
It follows that $h_{R/J} (j-1) \le h_{R/J} (j)$ if $j \le p+1$.
Thus, if $j \le p+1$, then we get that
\begin{eqnarray*}
  h_{R/I} (j) - h_{R/I} (j-1) & = & h_{R/J} (j) - h_{R/J} (j-2)
    - h_{R/J} (j-1) +  h_{R/J} (j-3) \\
  & = & \left [ h_{R/J} (j) - h_{R/J} (j-1)  \right ]_+ -
        \left [ h_{R/J} (j-2) - h_{R/J} (j-3) \right ]_+ \\
  & = & h_{R/(J, \ell)} (j) -  h_{R/(J, \ell)} (j-2),
\end{eqnarray*}
where we used again that $R/J$ has the SLP. Invoking Proposition
\ref{prop:max-rank-three-var}, we obtain
\[
\left [ h_{R/I} (j) - h_{R/I} (j-1)  \right ]_+ = h_{R/(I, \ell)} (j)
\]
if $ j \le p+1$. This equality is also true (using the same
computations) if $h_{R/J} (p+1) \leq h_{R/J} (p+2)$. Otherwise, we
get $\left [ h_{R/I} (p+2) - h_{R/I} (p+1)  \right ]_+ = 0$.
However, using \eqref{eq:maps}, we get $[R/(I, \ell)]_{p+2} = 0$.
Hence, we have in any case
\[
\left [ h_{R/I} (p+2) - h_{R/I} (p+1)  \right ]_+ =
h_{R/(I, \ell)} (p+2) = 0,
\]
which completes the argument.
\end{proof}


\section{Almost uniform powers of linear forms in 5 variables} \label{5-variables}

In this section, we let $R = k[x_1,x_2,x_3,x_4,x_5]$, where $k$ is a field of characteristic zero and we will apply the approach described in \S 2 and results on fat points in $\PP^3$ to determine exactly when an ideal generated by uniform  powers of six general linear forms in $R$ fails the WLP.  Some non-uniform cases are also discussed.  To this end, without loss of generality we set $I = \langle x_1^{a_1},x_2^{a_2},x_3^{a_3},x_4^{a_4},x_5^{a_5},L^{a_6} \rangle$, where $L$ is a general linear form and $a_1 \leq a_2 \leq a_3 \leq a_4 \leq a_5\le a_6$.

\begin{theorem} \label{five var uniform case}
Let $L$ be a general linear form and let $I = \langle x_1^{d},x_2^{d},x_3^{d},x_4^{d},x_5^d,L^{d} \rangle$.
 Then, the ring $R/I$ fails the WLP if and only if $d>3$.
\end{theorem}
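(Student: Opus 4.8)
My plan is to transport everything to fat points in $\PP^3$, following Section~\ref{gen appr}. Fix a general linear form $\ell$. Combining Theorem~\ref{thm:inverse-system} with Proposition~\ref{reduce proj sp} (taking $\ell$ as the distinguished form and $x_1^d,\dots,x_5^d,L^d$ as the six powers) gives, for every $j$,
\[
\dim_k[R/(I,\ell)]_j = \dim_k \cL_3\bigl(j;(j-d+1)^6\bigr),
\]
the system of degree-$j$ surfaces in $\PP^3$ with six general points of multiplicity $j-d+1$. By \eqref{eq:max-rank-I}, $R/I$ fails the WLP in degree $j$ exactly when this number differs from $\bigl[\dim_k[R/I]_j-\dim_k[R/I]_{j-1}\bigr]_+$, and the latter is supplied by Lemma~\ref{lem:expected-Hilb} from the Koszul complex of the complete intersection $A=R/\langle x_1^d,\dots,x_5^d\rangle$. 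Thus the problem reduces to comparing an actual dimension of a fat-point system with a Koszul ``expected'' number.

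For the failure direction I would test the single degree $j=2d-1$, where the multiplicity equals $d$. Since $2d-1\le 3d-3$, Lemma~\ref{lem:expected-Hilb} applies, and as $j-2d+4<4$ only the first two terms of the Koszul complex of $A$ contribute, giving
\[
\dim_k[R/I]_{2d-1}-\dim_k[R/I]_{2d-2} = \binom{2d+2}{3}-6\binom{d+2}{3} = \tfrac13 d(d+1)(d-4) =: V(d),
\]
which is $\ge 0$ precisely when $d\ge 4$. To evaluate $\dim_k\cL_3(2d-1;d^6)$ I would apply Lemma~\ref{lem:Cremona} three times in succession (each with Cremona coefficient $-2$), carrying the system through $\cL_3(2d-3;d^2,(d-2)^4)$ and $\cL_3(2d-5;(d-2)^4,(d-4)^2)$ to $\cL_3(2d-7;(d-4)^6)$. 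The latter is in standard form, because $2(2d-7)\ge 4(d-4)$, so Theorem~\ref{thm:fat-points-P3} applies; as $t_i=2(d-4)-(2d-7)=-1<0$ the speciality term vanishes, yielding
\[
\dim_k \cL_3(2d-1;d^6)=\binom{2d-4}{3}-6\binom{d-2}{3}=\tfrac13(d-2)(d-3)(d+2).
\]
Because $\tfrac13(d-2)(d-3)(d+2)-V(d)=4$, the actual dimension exceeds the expected value $[V(d)]_+=V(d)$ by exactly $4$, so for every $d\ge 4$ the map $\times\ell\colon[R/I]_{2d-2}\to[R/I]_{2d-1}$ has non-maximal rank and $R/I$ fails the WLP.

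For the converse I would show that when $d\le 3$ no degree witnesses a failure. The case $d=1$ is immediate, since $I=\mm$ and $R/I=k$. For $d\in\{2,3\}$, in degrees $j\le d-1$ the algebra $R/I$ coincides with the complete intersection $A$, where $\times\ell$ has maximal rank by Stanley--Watanabe, while $[R/I]_j=0$ for $j>3d-3$. The finitely many remaining degrees $d\le j\le 3d-3$ I would check individually, computing $\dim_k\cL_3(j;(j-d+1)^6)$ by Lemma~\ref{lem:Cremona} and Theorem~\ref{thm:fat-points-P3} (or directly) and matching it against the Koszul value from Lemma~\ref{lem:expected-Hilb}; in each degree the two agree. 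In particular at $j=2d-1$ one has $V(d)=-4<0$ and the system is empty, so no failure occurs and $R/I$ has the WLP.

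The crux, and the main obstacle, is the exact evaluation $\dim_k\cL_3(2d-1;d^6)=\tfrac13(d-2)(d-3)(d+2)$: one must verify that the three Cremona transformations are admissible for every $d\ge 4$ (it is the second and third steps, demanding $d-4\ge 0$, that force $d\ge 4$) and that the reduced system truly lies in standard form before De Volder--Laface can be invoked. Once that is secured, the failure is governed entirely by the clean constant difference $\tfrac13(d-2)(d-3)(d+2)-\tfrac13 d(d+1)(d-4)=4$ between the actual and expected dimensions.
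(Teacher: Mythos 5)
Your proposal matches the paper's proof essentially step for step: the failure for $d\ge 4$ is detected in the same degree $2d-1$, via the same chain of three Cremona transformations reducing $\cL_3(2d-1;d^6)$ to the standard-form system $\cL_3(2d-7;(d-4)^6)$, the same appeal to Theorem~\ref{thm:fat-points-P3}, and the same constant discrepancy of $4$ against the Koszul expected value $\frac13 d(d+1)(d-4)$. The only (minor) divergence is in the cases $d\le 3$, which the paper disposes of by a \cocoa\ computation while you propose an equivalent finite hand-check of the degrees $d\le j\le 3d-3$ with the same fat-point machinery; that check still has to be carried out, but it is routine.
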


\begin{proof} Using \cocoa\ we check that if $d<4$ then $R/I$  has the WLP. Assume $d\ge 4$ and we will show that $R/I$ fails the WLP in degree $2d-1$. To this end, we take  $\ell \in R$  a general linear form. As mentioned in Section \ref{gen appr}, it is enough to show that
\begin{equation}\label{to show2}
\dim [R/(I,\ell)]_{2d-1} >  [\dim[R/I]_{2d-1} - \dim[R/I]_{2d-2}]_+.
\end{equation}
We will compute the left-hand side using the approach of Section \ref{gen appr} and the right-hand side using the fact that $R/J$ has the SLP.

Now let $S = k[x_1,x_2,x_3,x_4]$.  We want to compute the vector space dimension
\begin{equation}\label{desired dim2}
\dim_k [\fq _1^{d} \cap \fq _2^{d} \cap \fq _3^{d} \cap \fq _4^d \cap \fq_5^{d} \cap \fq_6^d ]_{2d-1}
= \dim_k {\mathcal L}_3(2d-1; d^6).
\end{equation}
 Applying a sequence of cubo-cubic Cremona transformations $$\begin{array}{ccl} \PP^3 & \dashrightarrow & \PP^3 \\ (x_1:x_2:x_3:x_4) & \dashrightarrow & (x_1^{-1},x_2^{-1},x_3^{-1},x_4^{-1})=(x_2x_3x_4:x_1x_3x_4:x_1x_2x_4:x_1x_2x_3)
\end{array}$$
we will transform the  last linear system to another one which has the same dimension, but it will be non-special and hence we will be able to compute its dimension. In fact, we apply Lemma  \ref{lem:Cremona} and we get

\begin{equation}\label{cremona}
\begin{array}{l}
\dim {\mathcal L_3}(2d-1; d^6)=\dim {\mathcal L_3}(2d-3; d^2,(d-2)^4) \\ \\
= \dim {\mathcal L_3}(2d-5; (d-2)^4,(d-4)^2) \\ \\ = \dim {\mathcal L_3}(2d-7; (d-4)^6).
\end{array}
\end{equation}
Since $2(2d-7)\ge 4(d-4)$, the linear system ${\mathcal L_3}(2d-7; (d-4)^6)$ is in standard form. Therefore, Theorem  \ref{thm:fat-points-P3} provides that  it is non-special and its dimension is given by 
\begin{equation} \label{barcelona}
 \dim {\mathcal L_3}(2d-7; (d-4)^6)={2d-4\choose 3 } - 6{d-2\choose 3}.
 \end{equation}

Now we have to compute the right-hand side of (\ref{to show2}).  Let $A = R/J$, where $J = \langle x_1^{d},x_2^{d},x_3^{d},x_4^{d},x_5^d \rangle$.
Since $R/J$ has the SLP, we have $$0 \leq h_{R/I}(2d-1) = h_A(2d-1) - h_A(d-1) .$$
Therefore,
\[
\begin{array}{l}
[\dim [R/I]_{2d-1} - \dim [R/I]_{2d-2}] \\ \\ =  h_A(2d-1) - h_A (d-1) - h_A(2d-2) + h_A(d-2)\\ \\
=  {2d+2\choose 3}-6{d+2\choose 3}.
\end{array}
\]
We easily verify that $$\dim [R/(I,\ell)]_{2d-1}-[\dim [R/I]_{2d-1} - \dim [R/I]_{2d-2}]=4$$ and this shows that $R/I$ fails the WLP in degree $2d-1$, which is what we wanted to prove.
\end{proof}

\begin{remark} 
Note that  Equation \eqref{barcelona} could also have been proven using Proposition 3.5 of \cite{ceg}.  However, we use our approach because it also applies to set-ups below where the hypotheses of the latter proposition are not satisfied.
\end{remark}

There  are several possible extensions of the above  theorem.  First, we can ask whether the WLP  property holds for the case of non-uniform powers and, in particular, we can ask what happens in the almost uniform case. We have

\begin{theorem} \label{five var almost-uniform case}
Let $L$ be a general linear form and let $I = \langle x_1^{d},x_2^{d},x_3^{d},x_4^{d},x_5^d,L^{d+e} \rangle$ with $e\ge 1$.  Then:

\begin{itemize}
\item[(i)] If $d$ is odd, then $R/I$ has the WLP if and only if $e \ge
\frac{3d-5}{2}$.

\item[(ii)] If $d$ is even, $R/I$ has the WLP if and only if $e \ge
\frac{3d-8}{2}$.
\end{itemize}

\end{theorem}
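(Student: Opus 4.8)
The plan is to reduce everything to a single computation in degree $\lambda := \lfloor \tfrac{5d-5}{2}\rfloor$, the peak (when $d$ is odd) resp.\ the lower of the two plateau points (when $d$ is even) of the complete intersection $A := R/J$, $J := \langle x_1^d,\dots,x_5^d\rangle$, whose socle degree is $5d-5$. Writing $s := d+e = a_6$, the two numerical thresholds translate uniformly into a comparison of $a_6$ with $\lambda$: for $d$ odd, $e \ge \tfrac{3d-5}{2} \Leftrightarrow a_6 \ge \lambda$, and for $d$ even, $e \ge \tfrac{3d-8}{2} \Leftrightarrow a_6 \ge \lambda - 1$. So the theorem asserts that $R/I$ has the WLP exactly when $a_6$ reaches the peak of $A$ (odd case), resp.\ comes within one of the lower plateau point (even case); this is the five-variable analogue of Theorem \ref{four var even}(i). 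By Proposition \ref{reduce proj sp} and Theorem \ref{thm:inverse-system}, for a general linear form $\ell$ one has $\dim_k[R/(I,\ell)]_m = \dim_k \mathcal{L}_3(m;\,(m-d+1)^5,\,m-s+1)$, so the entire problem becomes the evaluation of these fat-point linear systems in $\PP^3$.

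For the sufficiency direction (WLP holds when $a_6$ is above the threshold) I would argue by comparison with $A$, which has the SLP by Stanley--Watanabe. Since $R/I = A/(\overline{L}^{\,s})$, we have $[R/I]_m = [A]_m$ for $m < s$. When $a_6 > \lambda$ this gives injectivity of $\times\ell$ on $R/I$ in all degrees $\le \lambda$ (where $h_A$ is nondecreasing), while surjectivity for $m > \lambda$ is inherited from the surjection $A \twoheadrightarrow R/I$ together with the surjectivity of $\times\ell$ on $A$; the plateau degree in the even case is an isomorphism and causes no trouble. The borderline values of $a_6$ near $\lambda$ are handled exactly as in Theorem \ref{four var even}(i): in characteristic zero the $a_6$-th powers of linear forms span $[R]_{a_6}$, so a general $L^{a_6}$ falls outside the image of $\times\ell$, which preserves injectivity at the peak. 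The genuinely new point is the even-$d$ case $a_6 = \lambda-1$, where $L^s$ already lowers the Hilbert function \emph{before} the peak; here I would run the fat-point computation below at $m=\lambda$ (and check the neighbouring degrees) to verify that the linear system has precisely its expected dimension, so that no excess occurs and maximal rank holds.

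For the necessity direction (WLP fails when $a_6$ is below the threshold) I would exhibit a failure in degree $\lambda$, i.e.\ show
\[
\dim_k \mathcal{L}_3\big(\lambda;\ (\lambda-d+1)^5,\ \lambda-s+1\big) \;>\; \big[\, h_{R/I}(\lambda) - h_{R/I}(\lambda-1)\,\big]_+ .
\]
The right-hand side is read off from the Koszul resolution of $A$ via Lemma \ref{lem:expected-Hilb}. For the left-hand side the system is not in standard form (as $\lambda > 2d-2$ for $d \ge 3$), so I would apply a sequence of cubo-cubic Cremona transformations (Lemma \ref{lem:Cremona}), exactly as in the proof of Theorem \ref{five var uniform case}, to reach standard form, and then evaluate it by the De Volder--Laface formula (Theorem \ref{thm:fat-points-P3}). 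The speciality term $\sum_i \binom{t_i+1}{3}$ in that formula is exactly what records the excess of the actual dimension over the expected one; tracking it through the reductions should produce a strictly positive difference (a single binomial coefficient that is $\ge 1$), which is the desired failure. The off-by-one between the two parities falls out here: for even $d$ at $a_6 = \lambda-1$ the speciality contribution exactly cancels the change in the expected value, giving equality rather than strict inequality, whereas for $a_6 \le \lambda-2$ it does not.

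The main obstacle is the Cremona/De Volder--Laface bookkeeping: after reduction one must follow the one-dimensional base locus (the unique quadric through the five coordinate points, together with the six lines joining pairs of the remaining points) and determine with which multiplicities these enter, and this depends delicately on the parity of $d$ and on the size of $e$ relative to $d$, so the reduction branches into several subcases that must be organized carefully. A secondary obstacle, needed only for sufficiency, is to confirm that $\lambda$ is the only degree in which the WLP can fail: surjectivity in degrees $m>\lambda$ is easy, since the reduced systems acquire a point of multiplicity exceeding their degree and hence vanish, but injectivity throughout the intermediate range $2d-1 \le m \le \lambda$ must also be excluded, which for $a_6$ above the threshold follows from the coincidence $[R/I]_m = [A]_m$ and the SLP of $A$.
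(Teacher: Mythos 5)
Your sufficiency argument and the translation of the thresholds into $a_6 \ge \lambda$ (for $d$ odd) resp.\ $a_6 \ge \lambda-1$ (for $d$ even) match the paper, and for $e \ge d-1$ your plan for necessity is exactly what the paper does: reduce $\mathcal{L}_3(\lambda;(\lambda-d+1)^5,\lambda-d-e+1)$ by Cremona to a planar system and compare with Lemma \ref{lem:expected-Hilb}. But there is a genuine gap in the necessity direction: the map $\times\ell$ does \emph{not} in general fail to have maximal rank in degree $\lambda$ when $e$ is small, so your single target degree is wrong for part of the range. Concretely, for $d$ odd the Cremona reduction gives $\dim_k[R/(I,\ell)]_\lambda = \dim_k\mathcal{L}_2\bigl(\tfrac{3d-3}{2};\ (\tfrac{d-1}{2})^4,\ \tfrac{3d-3}{2}-e\bigr)$, and for small $e$ the point of multiplicity $\tfrac{3d-3}{2}-e$ forces each of the four lines through it and a point of multiplicity $\tfrac{d-1}{2}$ into the base locus with multiplicity $\tfrac{d-1}{2}-e$, which for $e$ small enough empties the system. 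For instance, with $d=9$, $e=1$ one finds $\dim_k\mathcal{L}_2(12;11,4^4)=0$, while $h_{R/I}(20)-h_{R/I}(19)=-215<0$; hence $\times\ell:[R/I]_{19}\to[R/I]_{20}$ is surjective and has maximal rank, and no failure is visible in degree $\lambda=20$ even though $e=1$ is far below the threshold $\tfrac{3d-5}{2}=11$.

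The paper avoids this by splitting necessity into two regimes. For $e \ge d-1$ (odd $d$; $e\ge d-2$ for even $d$) the failure is indeed from $\lambda-1$ to $\lambda$, as you propose. But for $e \le d-2$ (odd; $e\le d-3$ even) the failure occurs in the strictly lower degree $2d-1+j$, where $e=2j+\epsilon$ with $0\le\epsilon\le 1$: there the Cremona reduction of $\mathcal{L}_3(2d-1+j;(d+j)^5,d+j-e)$ lands in standard form, De Volder--Laface (Theorem \ref{thm:fat-points-P3}) shows the system is nonempty, and its excess over the expected dimension is $\binom{3j-e+4}{3}>0$. Your outline has no mechanism for locating this lower degree, so as written the argument cannot be completed for $1\le e\le d-2$.
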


\begin{proof}

Set $\lambda :=\lfloor \frac{5d-5}{2} \rfloor $.  We will actually prove the following sequence of statements:

\begin{itemize}
\item[(i$'$)]   If $d+e\ge \lambda $, then $R/I$ has the WLP.
\item[(ii$'$)] If $d$ is odd and $e\le d-2$, then $R/I$ fails the WLP.
\item[(iii$'$)] If $d$ is even and $e\le d-3$, then $R/I$ fails the WLP.
\item[(iv$'$)] If $d$ is odd and $d-1 \leq e \leq \frac{3d-7}{2}$ then $R/I$ fails the WLP.
\item[(v$'$)] If $d$ is even and $d-2 \leq e \leq \frac{3d-10}{2}$ then $R/I$ fails the WLP.
\item[(vi$'$)] If $d$ is even and $e = \frac{3d-8}{2}$ (i.e. $d+e=\lambda -1$) then $R/I$ has the WLP.
\end{itemize}

Throughout this proof we will denote $A = R/J$, where $J = \langle x_1^d,x_2^d,x_3^d,x_4^d,x_5^d \rangle$.

For (i$'$) the proof is the same as in Theorems 3.2(i) and 3.5(i).

(ii$'$) Since $d+e\le 2d-2$, we can write $e=2j+\epsilon $ with $0\le \epsilon \le 1$ and $j\le \frac{d-3}{2}$. We will show that $R/I$ fails the WLP in degree $2d-1+j$. To this end, we take  $\ell \in R$,   a general linear form. As mentioned in Section \ref{gen appr}, it is enough to show that
\begin{equation}\label{to show3}
\dim [R/(I,\ell)]_{2d-1+j} >  [\dim[R/I]_{2d-1+j} - \dim[R/I]_{2d-2+j}]_+.
\end{equation}
We will compute the left-hand side using the approach of Section \ref{gen appr} and the right-hand side using Lemma \ref{lem:expected-Hilb}.

We begin by computing the vector space dimension
\begin{eqnarray*}\label{desireddim2}
\lefteqn{
\dim_k [\fq _1^{d+j} \cap \fq _2^{d+j} \cap \fq _3^{d+j} \cap \fq _4^{d+j} \cap \fq_5^{d+j} \cap \fq_6^{d+j-e} ]_{2d-1+j} \hspace*{3cm} }\\
 & = & \dim_k {\mathcal L}_3(2d-1+j; (d+j)^5,d+j-e).
\end{eqnarray*}
Applying a sequence of cubo-cubic Cremona transformations (see Lemma \ref{lem:Cremona}),   we get
\begin{eqnarray*}\label{cremona2}
\lefteqn{
\dim_k {\mathcal L_3}(2d-1+j;\ (d+j)^5,d+j-e) } \\
& = &  \dim_k {\mathcal L_3}(2d-3-j;\ d+j, d+j-e), (d-j-2)^4  \\
& = & \dim_k {\mathcal L_3}(2d-5-3j+e;\ d-j-2+e, (d-j-2)^3, (d-3j-4+e)^2) \\
& = &  \dim_k {\mathcal L_3}(2d-7-5j+2e;\ d-3j-4+2e, (d-3j-4+e)^5).
\end{eqnarray*}
(Here we use the hypothesis $d+e\le 2d-2$ to guarantee that
$d-3j-4+2e \ge 0$.)

Since $2(2d-7-5j+2e)\ge 3(d-3j-4+e)+d-3j-4+2e$, the linear system ${\mathcal L_3}(2d-7-5j+2e; d-3j-4+2e, (d-3j-4+e)^5)$ is in standard form. Therefore, Theorem \ref{thm:fat-points-P3} provides
\begin{eqnarray*}
\lefteqn{
\dim_k {\mathcal L_3}(2d-7-5j+2e; (d-3j-4+e)^5,d-3j-4+2e) } \\[1ex]
& = &
 \left [ {2d-4-5j+2e\choose 3 } - 5{d-2-3j+e\choose 3}-
 {d-2-3j+2e\choose 3} \right ]_+ + 5 \cdot \binom{e-j}{3}.
\end{eqnarray*}
We claim that
\begin{eqnarray*}
\lefteqn{
{2d-4-5j+2e\choose 3 } - 5{d-2-3j+e\choose 3}-
 {d-2-3j+2e\choose 3}  + 5 \cdot \binom{e-j}{3} } \\
   & =  &   \frac{1}{6} (d-2-2j)
[(2d^2-3e^2+12e(1+j)-4(3+8j+4j^2)+d(3e-2-2j)]
\end{eqnarray*}
is positive. In fact, using $d \ge 2j+3$ and $e \ge 2 e$, one gets
\begin{eqnarray*}
\lefteqn{
  (2d^2-3e^2+12e(1+j)-4(3+8j+4j^2)+d(3e-2-2j)  } \\
  & \ge  &   2(2j+3)^2-3e^2+12e(1+j)-4(3+8j+4j^2)+(2j+3)(3e-2-2j) >0.
\end{eqnarray*}
We conclude that $\dim [R/(I,\ell)]_{2d-1+j} > 0$.

Now we  compute the right-hand side of (\ref{to show3}). Lemma \ref{lem:expected-Hilb} provides
\begin{eqnarray*}
\lefteqn{
\dim [R/I]_{2d-1+j} - \dim [R/I]_{2d-2+j} } \\[1ex]
&  = &
 h_A(2d-1+j) - h_A (d-1+j-e) - h_A(2d-2+j) + h_A(d-2+j-e) \\[1ex]
& = & {2d+j+2\choose 3}-5{d+j+2\choose 3}-{d+j+2-e\choose
3}+10{j+2\choose 3} .
\end{eqnarray*}
If $\dim [R/I]_{2d-1+j} - \dim [R/I]_{2d-2+j}\le 0$, then the WLP
fails because we have seen that $\dim [R/(I,\ell)]_{2d-1+j}>0$.

If $\dim [R/I]_{2d-1+j} - \dim [R/I]_{2d-2+j}> 0$,  then
\begin{eqnarray*}
\lefteqn{
  \dim [R/(I,\ell)]_{2d-1+j}-[\dim [R/I]_{2d-1+j} - \dim
[R/I]_{2d-2+j}]  } \\[1ex]
& \ge & {2d-4-5j+2e\choose 3 } - 5{d-2-3j+e\choose 3}-
 {d-2-3j+2e\choose 3}  + 5 \cdot \binom{e-j}{3} \\[1ex]
& &  - \left [{2d+j+2\choose 3}-5{d+j+2\choose
3}-{d+j+2-e\choose 3}+10{j+2\choose 3} \right ] \\[1ex]
  & = &    {3j-e+4\choose 3} > 0.
\end{eqnarray*}
Hence, we conclude in every case that $R/I$ fails the WLP in degree
$2d-1+j$, which is what we wanted to prove.

The proof of (iii$'$)  is completely analogous.

Now we prove (iv$'$). Suppose that $d$ is odd and $d-1 \leq e \leq \frac{3d-7}{2}$.  We claim that $R/I$ fails the WLP (usually by failing injectivity) from degree $\frac{5d-7}{2}$ to degree $\frac{5d-5}{2}$.  We first consider (by applying a sequence of cubo-cubic Cremona transformations (see Lemma \ref{lem:Cremona})
\[
\begin{array}{rcl}
\dim [R/(I,\ell)]_{\frac{5d-5}{2}} & = & \dim_k \mathcal L_3 \left (\frac{5d-5}{2}; \ (\frac{3d-3}{2})^5, \frac{3d-3}{2}-e \right ) \\ \\
& = & \dim_k \mathcal L_3( \frac{3d-3}{2}; \  (\frac{d-1}{2})^4,  \frac{3d-3}{2}, \frac{3d-3}{2}-e) \\ \\
& = & \dim_k \mathcal L_3( \frac{3d-3}{2}; \ \frac{3d-3}{2}, (\frac{d-1}{2})^4,   \frac{3d-3}{2}-e) \\ \\
& = & \dim_k \mathcal L_2 \left (\frac{3d-3}{2}; \ (\frac{d-1}{2})^4,  \frac{3d-3}{2}-e \right ) \ \hbox{ \ \ \ (by Proposition \ref{reduce proj sp})}.
\end{array}
\]
Notice that since $d-1 \leq e$, we have $\frac{3d-3}{2} -e \leq \frac{d-1}{2}$.   Thanks to Lemma \ref{indep cond}, these fat points impose independent conditions, so  we have
\[
\begin{array}{rcl}
\dim [R/(I,\ell)]_{\frac{5d-5}{2}} & = & \binom{\frac{3d-3}{2} +2}{2} - 4 \cdot \binom{\frac{d-1}{2} +1}{2} - \binom{\frac{3d-3}{2}-e +1}{2} \\[1ex]
& = &  \frac{9d^2-1}{8} - 4 \cdot \frac{d^2-1}{8} - \frac{1}{2} \left [ \frac{9d^2 -12d + 3}{4} - e(3d-2) + e^2 \right ] \\[1ex]
& = & \frac{e}{2}(3d-2-e) - \binom{d-1}{2}+1
\end{array}
\]
(The fact that $\dim_k \mathcal L_3( \frac{3d-3}{2}; \ \frac{3d-3}{2}, \frac{d-1}{2}, \frac{d-1}{2}, \frac{d-1}{2}, \frac{d-1}{2},  \frac{3d-3}{2}-e) = \frac{e}{2}(3d-2-e) - \binom{d-1}{2}+1$ could also be obtained via Theorem \ref{thm:fat-points-P3}).

Combined with Lemma \ref{lem:expected-Hilb}, we obtain
\[
\dim[R/(I,\ell)]_{\frac{5d-5}{2}} - \left (\dim [R/I]_{\frac{5d-5}{2}} - \dim [R/I]_{\frac{5d-5}{2}-1} \right ) =
\binom{\frac{3d-7}{2}-e+3}{3}.
\]
If the part in parentheses is non-negative then we expect injectivity, but the positivity of the part on the right (since we assumed $e \leq\frac{3d-7}{2}$) implies that the WLP fails.
Now suppose that the part in parentheses is negative, so that we expect surjectivity.  Then one checks that
\[
\begin{array}{rcl}
\dim[R/(I,\ell)]_{\frac{5d-5}{2}} & = & \frac{e}{2}(3d-2-e) - \binom{d-1}{2}+1 \\
& = & \frac{(e-d)(2d-e-2)}{2} + \binom{d+1}{2} \\
& \geq & \frac{(e-d)(d+3)}{4} + \binom{d+1}{2} \hbox{\ \ \ \ \ \ (since $e \leq \frac{3d-7}{2}$)} \\
& > & 0
\end{array}
\]
(the last inequality is for $d-1 \leq e \leq \frac{3d-7}{2}$), and so surjectivity (and hence WLP) fails.
This completes the proof of (iv$'$).

For (v$'$), we show that $R/I$ fails the WLP from degree $\frac{5d-8}{2}$ to degree $\frac{5d-6}{2}$.
Since we will use part of the computations also for (vi$'$), we consider all even $d$ such that $d-2 \le e \le \frac{3d-8}{2}$.  In the same way as above we get
\[
\begin{array}{rcl}
\dim [R/(I,\ell)]_{\frac{5d-6}{2}} & = & \dim_k \mathcal L_3 (\frac{5d-6}{2}; \ (\frac{3d-4}{2})^5,  \frac{3d-4}{2}-e) \\
& = & \dim_k \mathcal L_3(\frac{3d-2}{2}; \ (\frac{d}{2})^4,  \frac{3d-4}{2}, \frac{3d-4}{2}-e ).
\end{array}
\]
Using Theorem \ref{thm:fat-points-P3}, we obtain
\[
\dim[R/(I,\ell)]_{\frac{5d-6}{2}} = -d^2 + 3de -e^2 +6d -4e-4 = \frac{5d^2}{4} - \left ( \frac{3d-4}{2}-e \right )^2,
\]
which is at least $d^2$ in the given range for $e$.  In particular, we are finished whenever $\dim [R/I]_{\frac{5d-6}{2}} \leq \dim[R/I]_{\frac{5d-6}{2}-1}$.

If $\dim [R/I]_{\frac{5d-6}{2}} \geq \dim[R/I]_{\frac{5d-6}{2}-1}$ (so we expect injectivity),
then Lemma \ref{lem:expected-Hilb} provides, minding the bound on $e$,
\begin{equation}
  \label{eq:d even}
\dim[R/(I,\ell)]_{\frac{5d-6}{2}} - \left (\dim [R/I]_{\frac{5d-6}{2}} - \dim [R/I]_{\frac{5d-6}{2}-1} \right ) =
\binom{\frac{3d-10}{2}-e+3}{3}.
\end{equation}
Since we are assuming that the part in parentheses is non-negative, the positivity of the part on the right implies that WLP fails if $e \leq\frac{3d-10}{2}$, as claimed.

We now prove (vi$'$).  Arguing as in Theorem 3.5(iii), we see that if $t\le \lambda -2$ or $t\ge \lambda $ then
 $(\times \ell) : [R/I]_t \rightarrow [R/I]_{t+1}$ has maximal rank. So, it only remains to study the case $t=\lambda -1$. We are going to prove that the multiplication  $(\times \ell) : [R/I]_{\lambda -1} \rightarrow [R/I]_{\lambda }$ is injective, i.e. $\dim [R/(I,\ell)]_\lambda =  \dim[R/I]_\lambda - \dim[R/I]_{\lambda-1}$.

Notice that the assumptions force $d \geq 4$, so we may apply the computations of (v$'$). Thus, the desired follows by Equation \eqref{eq:d even}.
\end{proof}

The same argument gives us the following result.

\begin{proposition} \label{five var non uniform case2}
Let $L$ be a general linear form and let $I = \langle x_1^{a_1},x_2^{a_2},x_3^{a_3},x_4^{a_4},x_5^{a_5},L^{a_6} \rangle$. Assume that $5\le a_1\le a_2\le a_3 \le a_4\le a_5\le a_6\le a_1+2$. Then the ring $R/I$ fails the WLP.
\end{proposition}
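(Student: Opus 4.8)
The plan is to follow the pattern established in the proofs of Theorem~\ref{five var uniform case} and, more closely, parts (ii$'$) and (iii$'$) of Theorem~\ref{five var almost-uniform case}: translate the failure of the WLP into a comparison of two dimensions in a single well-chosen degree $j$, compute the genuine dimension via fat points in $\PP^3$, and compare it against the expected dimension coming from the Strong Lefschetz Property of a complete intersection. The key structural observation is that the hypothesis $a_6 \le a_1 + 2$ forces every exponent into the narrow window $[a_1, a_1+2]$, so we remain throughout in the ``small spread'' regime that governs the failure in Theorem~\ref{five var almost-uniform case}; this is exactly what makes ``the same argument'' apply. I would take the candidate failure degree $j$ to be the natural analogue of the degree $2a_1-1$ used in Theorem~\ref{five var uniform case}, corrected by a bounded shift (at most $1$) determined by the exponent spread, and aim to verify
\[
\dim_k [R/(I,\ell)]_{j} \; > \; \big[\dim_k [R/I]_{j} - \dim_k [R/I]_{j-1}\big]_+
\]
for a general linear form $\ell$, which by \eqref{eq:max-rank-I} exhibits the loss of maximal rank in degree $j$.

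For the left-hand side I would invoke Proposition~\ref{reduce proj sp} together with Theorem~\ref{thm:inverse-system} to rewrite
\[
\dim_k [R/(I,\ell)]_{j} = \dim_k \cL_3\big(j;\, j-a_1+1,\ldots,j-a_6+1\big),
\]
a system of six fat points in $\PP^3$ whose multiplicities, by the window hypothesis, all lie within $2$ of one another. I would then apply a bounded sequence of cubo-cubic Cremona transformations through Lemma~\ref{lem:Cremona}, exactly as in \eqref{cremona} and in the reduction carried out in the proof of Theorem~\ref{five var almost-uniform case}, to drive the system into standard form; since the multiplicities begin clustered, only a few steps are needed and the cluster persists. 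Theorem~\ref{thm:fat-points-P3} then supplies the dimension of the standard-form system in closed form. For the right-hand side, the complete intersection $A := R/\langle x_1^{a_1},\ldots,x_5^{a_5}\rangle$ has the SLP by \cite{stanley}, \cite{watanabe}, so Lemma~\ref{lem:expected-Hilb} expresses the expected dimension as an alternating sum of binomial coefficients read off from the Koszul resolution of $A$, shifted by $a_6$.

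The last step is to subtract the two quantities and show the difference is strictly positive; as in the earlier theorems I expect it to collapse to a single binomial coefficient of the shape $\binom{\ast}{3}$ that is manifestly at least $1$ in the allowed range. The main obstacle is the \emph{loss of symmetry}: unlike the uniform situation of Theorem~\ref{five var uniform case}, the six multiplicities $j-a_i+1$ are no longer equal, so at each Cremona step the choice of which four points to transform matters, and the correction terms $\sum \binom{t_i+1}{3}$ in Theorem~\ref{thm:fat-points-P3} together with the bracketed differences in Lemma~\ref{lem:expected-Hilb} must be tracked with care. I would organize this as a finite case analysis according to how many of the $a_i$ equal $a_1$, $a_1+1$, or $a_1+2$ (a short list of patterns) and according to the parity of $a_1+\cdots+a_5$, which decides whether the relevant degree is an integer and hence whether the failure appears as a loss of injectivity or of surjectivity, mirroring the even/odd dichotomy of Theorem~\ref{five var almost-uniform case}. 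The positivity itself then reduces to a handful of low-degree polynomial inequalities in the $a_i$, each valid because the hypothesis $a_1 \ge 5$ keeps the window deviations small relative to $a_1$.
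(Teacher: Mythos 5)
Your proposal follows precisely the paper's approach: the paper's entire proof of this proposition is the single sentence that ``the same argument'' as in Theorem \ref{five var almost-uniform case} applies, and your plan --- reduce to a linear system of six fat points in $\PP^3$ via Proposition \ref{reduce proj sp} and Theorem \ref{thm:inverse-system}, bring it to standard form with Lemma \ref{lem:Cremona}, evaluate it with Theorem \ref{thm:fat-points-P3}, and compare against the expected dimension supplied by Lemma \ref{lem:expected-Hilb} --- is exactly that argument. The finite case analysis and positivity checks you defer are likewise left implicit in the paper, so the proposal matches it in both method and level of detail.
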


Our methods extend beyond the results mentioned above.

\begin{example}
Using the above notation, one shows:
\begin{itemize}
    \item[(i)] If $d\ge 4$ and  $(a_1,a_2,a_3,a_4,a_5,a_6)=(d,d+1,d+2,d+3,d+4,d+5)$, then the ring $R/I$ fails the WLP in degree $2d+4$.
      \item[(ii)] If $d\ge 4$ and $(a_1,a_2,a_3,a_4,a_5,a_6)=(d,d+3,d+4,d+7,d+7,d+10)$, then the ring $R/I$ fails the WLP in degree $2d+9$.
      \end{itemize}
\end{example}

\bigskip


\section{Uniform powers of linear forms} \label{uniform-powers}

In this section we consider the case of an almost complete intersection of general linear forms, whose generators all have the same degree, $d$.  We give a complete answer, in the case of an even number of variables, to the question of when the WLP holds.  Interestingly, the case of an odd number of variables is more delicate, and we are only able to give a partial result, concluding with a conjecture.

We first consider the case of an even number of variables.  Set $R = k[x_1, \cdots ,x_{2n}]$, where $k$ is a field of
characteristic zero. We  determine  when an ideal
generated by uniform  powers of $2n+1$ general linear forms in $R$
fails the WLP. If $n = 1$, then $R/I$ always has the WLP due to \cite{HMNW}.
If $n = 2$, then  $R/I$ fails the WLP if and only if $d \geq 3$ by Theorem \ref{four var even}.

\begin{theorem}
  \label{2n var uniform case}
Let $L \in R$ be a general linear form, and let $I = \langle x_1^{d},
\cdots ,x_{2n}^d,L^{d} \rangle$, where $n \ge 3$.
Then the ring $R/I$ fails the WLP if and only if $d>1$.
\end{theorem}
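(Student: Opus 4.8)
The plan is to dispose of $d=1$ at once and then, for every $d \ge 2$, exhibit a single degree in which multiplication by a general linear form fails to be surjective. When $d=1$ the ideal $I$ is just the maximal ideal (since $L$ is linear), so $R/I = k$ and the WLP holds trivially. For $d \ge 2$ I claim the failure occurs in the passage from degree $m-1$ to degree $m$, where $m = n(d-1)$ is the midpoint of the complete intersection $A = R/\langle x_1^d, \dots, x_{2n}^d\rangle$. Concretely, for a general linear form $\ell$ I will show $\dim_k [R/(I,\ell)]_{m} = 1$, while Lemma \ref{lem:expected-Hilb} gives $\big[\dim_k[R/I]_m - \dim_k[R/I]_{m-1}\big]_+ = 0$; by \eqref{eq:max-rank-I} this forces the failure of the WLP.

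For the left-hand side, Proposition \ref{reduce proj sp} together with Theorem \ref{thm:inverse-system} translates the computation into a fat-point problem one dimension lower: setting $e := (n-1)(d-1) = m-d+1$,
\[
\dim_k [R/(I,\ell)]_{m} = \dim_k \mathcal{L}_{2n-2}\big(n(d-1);\, e^{2n+1}\big).
\]
The heart of the argument is a descent that lowers the number of variables by two at each step while preserving the dimension of the system. Apply the Cremona transformation of Lemma \ref{lem:Cremona} centered at any $2n-1$ of the $2n+1$ points: here the ambient is $\mathcal{L}_{2n-2}$, and the integer called $m$ in that lemma equals $(2n-3)\,n(d-1) - (2n-1)e = -(d-1)$, which is admissible since $e-(d-1) = (n-2)(d-1) \ge 0$. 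The transform is
\[
\mathcal{L}_{2n-2}\big((n-1)(d-1);\, ((n-2)(d-1))^{2n-1},\, ((n-1)(d-1))^{2}\big),
\]
in which exactly the two untouched points now carry multiplicity equal to the degree $(n-1)(d-1)$. Every form in this system is therefore a cone with vertex the line joining those two points, so projection from that line identifies the system with
\[
\mathcal{L}_{2n-4}\big((n-1)(d-1);\, ((n-2)(d-1))^{2n-1}\big),
\]
which is exactly the original shape with $n$ replaced by $n-1$. Iterating this Cremona-plus-projection step, I reach the planar base case $\mathcal{L}_2\big(2(d-1);\, (d-1)^5\big)$. A Bézout argument against the unique conic $Q$ through the five general points (its intersection number $5(d-1) > 4(d-1)$ with any member forces $Q$ to divide that member) shows inductively that the system is spanned by $Q^{\,d-1}$ and has dimension exactly $1$. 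Hence $\dim_k[R/(I,\ell)]_m = 1$.

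For the right-hand side I would run the Koszul complex of the complete intersection $A$, whose Hilbert function is the coefficient sequence of $(1+t+\dots+t^{d-1})^{2n}$, symmetric and unimodal about $m = n(d-1)$. Using Lemma \ref{lem:expected-Hilb}, the expected first difference at the midpoint is
\[
\big[h_A(m) - h_A(m-d)\big]_+ - \big[h_A(m-1) - h_A(m-1-d)\big]_+,
\]
and the computation to carry out is that this quantity equals $\big[\,\big(h_A(m)-h_A(m-1)\big) - \big(h_A(m-d)-h_A(m-1-d)\big)\,\big]_+$ and is $\le 0$ precisely when $n \ge 3$. Concretely one must check that the increment of $h_A$ at the midpoint does not exceed its increment $d$ steps earlier; for $d=2$ this reduces to $\binom{2n}{n}-\binom{2n}{n-1} \le \binom{2n}{n-2}-\binom{2n}{n-3}$, which holds exactly when $n-3 \ge 0$, and fails (giving the value $1$) at $n=2$. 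This is precisely the arithmetic reason four variables behave differently, cf.\ Theorem \ref{four var even}. Granting the inequality, the bracket vanishes and the actual dimension $1$ strictly exceeds the expected $0$, so the map $[R/I]_{m-1}\to[R/I]_m$ fails to be surjective.

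The main obstacle is the descent step, and within it two points need genuine care. First, the projection from the vertex line must be shown to induce an isomorphism of the two linear systems, including that the images of the remaining $2n-1$ points are again general in $\mathbb{P}^{2n-4}$ and retain their multiplicities $(n-2)(d-1)$; the cone structure is clean, but the transfer of genericity is the delicate part. (The conceptual reason the systems are special at all is that the $2n+1 = (2n-2)+3$ points lie on a rational normal curve of degree $2n-2$, which is what produces a nonempty system despite a nonpositive expected dimension.) Second, the Koszul estimate of the expected value at the midpoint must be pinned down uniformly in $n$ and $d$ — a monotonicity statement for the first differences of $(1+t+\dots+t^{d-1})^{2n}$ near the peak — and this is exactly the place where the hypothesis $n \ge 3$ enters.
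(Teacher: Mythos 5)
Your overall strategy is the same as the paper's: translate $\dim_k[R/(I,\ell)]_{n(d-1)}$ into a fat-point system in $\PP^{2n-2}$, apply the Cremona transformation of Lemma \ref{lem:Cremona} (your computation of the shift $-(d-1)$ and the resulting multiplicities is correct), peel off the two points of full multiplicity to descend to $\PP^{2n-4}$, iterate to the planar case, and conclude that the actual dimension is $1$ while the expected dimension is nonpositive. The descent step you worry about is exactly what Proposition \ref{reduce proj sp} is designed for (the paper applies it twice per iteration, once per full-multiplicity point), so the ``transfer of genericity'' concern is handled by machinery already in place; and your Bézout argument in the plane is a harmless variant of the paper's final Cremona step $\dim_k\cL_2(2d-2;(d-1)^5)=\dim_k\cL_2(2d-2;(d-1)^2,0^3)=1$.

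The genuine gap is the right-hand side. You correctly reduce the problem to the inequality
\[
h_A\bigl(n(d-1)\bigr)-h_A\bigl(n(d-1)-1\bigr)\;\le\;h_A\bigl(n(d-1)-d\bigr)-h_A\bigl(n(d-1)-d-1\bigr),
\]
where $h_A$ is the coefficient sequence of $(1+t+\cdots+t^{d-1})^{2n}$, but you only verify it for $d=2$ and otherwise describe it as ``the computation to carry out'' and ``a monotonicity statement \dots that must be pinned down uniformly in $n$ and $d$.'' This is not a routine verification: it is the paper's Claim~2 (equivalently Proposition \ref{prop:coefficients-growth}), and it occupies roughly half of the proof. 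The paper establishes it by induction on $n\ge 3$: the base case $n=3$ is the explicit identity $\binom{3d+1}{4}-7\binom{2d+1}{4}+21\binom{d+1}{4}=-\tfrac{1}{12}d(d-2)(5d^2+2d+5)\le 0$, and the inductive step uses the factorization $A\cong B\otimes_k k[x]/(x^d)$ (so $h_A(j)=\sum_{i=0}^{d-1}h_B(j-i)$), the symmetry of $h_B$ about $\tfrac12(2n-1)(d-1)$, and the unimodality of $h_C$ to telescope the required inequality down to the induction hypothesis in $2n-2$ variables. Without some such argument your proof does not close, since naive ``concavity near the peak'' heuristics for these coefficient sequences are false in general (as your own $n=2$ computation shows, the inequality genuinely depends on $n\ge 3$). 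You should either reproduce an induction of this type or supply an independent proof of the displayed inequality.
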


\begin{proof} It is clear that for $d=1$, $R/I$ has WLP.

Assume $d\ge 2$. We will show that $R/I$ fails WLP in degree $nd-n$. To this end, we take  $\ell \in R$,   a general linear form. As mentioned in Section \ref{gen appr}, it is enough to show that
\begin{equation}
   \label{toshow2n}
\dim_k [R/(I,\ell)]_{nd-n} > \big [\dim_k [R/I]_{nd-n} -
\dim_k [R/I]_{nd-n-1} \big ]_+.
\end{equation}

First, we compute the left-hand side of (\ref{toshow2n}).

\vskip 2mm  \noindent {\bf Claim 1:} $\dim [R/(I,\ell)]_{nd-n}=1.$

\vskip 2mm  \noindent {\em Proof of Claim 1:} By Proposition \ref{reduce proj sp} and Theorem \ref{thm:inverse-system}, we have
\begin{eqnarray*}
\lefteqn{
  \dim_k [R/(I,\ell)]_{nd-n} } \\
& = &   \dim_k [\fq _1^{(n-1)d-(n-1)} \cap \fq _2^{(n-1)d-(n-1)} \cap \cdots
 \cap \fq_{2n}^{(n-1)d-(n-1)} \cap \fq_{2n+1}^{(n-1)d-(n-1)}]_{nd-n} \\
& = & \dim_k {\mathcal L}_{2n-2}(nd-n; ((n-1)d-(n-1))^{2n+1}).
\end{eqnarray*}
Applying Lemma \ref{lem:Cremona},  we get
\begin{eqnarray*}
\lefteqn{
\dim_k {\mathcal L}_{2n-2}(nd-n; ((n-1)d-(n-1))^{2n+1})   } \\
& = &   \dim_k
{\mathcal L_{2n-2}}((n-1)d-(n-1);
((n-1)d-(n-1))^2,((n-2)d-(n-2))^{2n-1}).
\end{eqnarray*}
Using  twice Proposition 2.3, it follows that
\begin{eqnarray}
  \label{eq:inductive}
\lefteqn{
\dim_k {\mathcal L}_{2n-2}(nd-n; ((n-1)d-(n-1))^{2n+1})   }  \nonumber \\
& = &   \dim_k {\mathcal
L_{2n-3}}((n-1)d-(n-1); (n-1)d-(n-1),((n-2)d-(n-2))^{2n-1}) \\
& = & \dim_k
{\mathcal L_{2n-4}}((n-1)d-(n-1); ((n-2)d-(n-2))^{2n-1}). \nonumber
\end{eqnarray}
If $n = 3$, then we get by applying  again Lemma \ref{lem:Cremona}
\[
\dim_k {\mathcal L_2}(2d-2; (d-1)^5) =
\dim_k {\mathcal L_2}(2d-2; (d-1)^2, 0^3) = 1,
\]
as desired.

If $n \geq 4$, then we conclude by induction using Equation \eqref{eq:inductive}.
Thus, Claim 1 is shown.\smallskip

Second, we consider the right-hand side of Inequality \eqref{toshow2n}. Taking into account Claim 1, we see that $R/I$ fails the WLP, once we have shown the following statement.

\vskip 2mm  \noindent {\bf Claim 2:} $\dim_k [R/I]_{nd-n} \le
\dim_k [R/I]_{nd-n-1}$.

\vskip 2mm  \noindent {\em Proof of Claim 2:} We use induction on $n \ge 3$.
Let $A = R/\langle x_1^{d}, \cdots ,x_{2n}^d \rangle$. Assume $n = 3$. Lemma \ref{lem:expected-Hilb}   provides
\begin{eqnarray*}
\lefteqn{
\dim_k [R/I]_{3d-3} - \dim_k [R/I]_{3d-4}   } \\[1ex]
& = &   h_A(3d-3) - h_A (2d-3) - h_A(3d-4) + h_A(2d-4) \\[1ex]
& = &  {3d+1\choose 4}-7{2d+1\choose 4}+21{d+1\choose 4} \\
& = & - \frac{1}{12} d (d - 2) (5 d^2 + 2d + 5) \\
& \le & 0,
\end{eqnarray*}
as desired.

Let $n \ge 4$. By Lemma \ref{lem:expected-Hilb}, Claim 2 can be rewritten as
\begin{equation}
   \label{eq:ineq-ci}
  h_A (n d - n) - h_A (n d - n - 1) \le h_A (n d - n -d ) - h_A (n d - n -d - 1).
\end{equation}
Consider now the ring $B = k[x_1,\ldots,x_{2n-1}]/\langle x_1^{d}, \cdots ,x_{2n-1}^d \rangle$.
Then $A \cong B \otimes_k k[x]/(x^{d})$, which implies
\begin{equation}
  \label{eq:compare-hilb}
  h_A (j) = h_B (j) + h_B (j-1) + \cdots + h_B (j  - (d-1)).
\end{equation}
Thus, the last inequality becomes
\begin{equation*}
  h_B (n d - n) - h_B (n d - n - d) \le h_B (n d - n - d) - h_B (nd - n - 2 d).
\end{equation*}
The Hilbert function of $B$ is symmetric about $\frac{1}{2} (2n-1) (d-1)$, so $h_B (n d - n) = h_B (n d - n - d + 1)$. Thus, we have to show
\begin{equation*}
  h_B (n d - n - d + 1) - h_B (n d - n - d) \le h_B (n d - n - d) - h_B (nd - n - 2 d).
\end{equation*}
To this end put  $C = k[x_1,\ldots,x_{2n-2}]/\langle x_1^{d}, \cdots
,x_{2n-2}^d \rangle$. Then $B \cong C \otimes_k k[x]/(x^d)$. Hence
using a relation similar to Equation \eqref{eq:compare-hilb}, Claim
2 follows, once we have shown
\begin{eqnarray}
  \label{eq:desired-ineq}
\lefteqn{
h_C  (n d - n - d + 1) - h_C (n d - n - 2 d + 1) } \nonumber \\
& \le &  h_C (n d - n - d) + h_C (n d - n - d) + \cdots + h_C (n d - n - 2 d + 1) \\
 && - \big [ h_C (nd - n - 2 d) + h_C (nd - n - 2d -1) + \cdots + h_C (nd - n - 3d + 1) \big ]. \nonumber
\end{eqnarray}
Our induction hypothesis (see Inequality \eqref{eq:ineq-ci}) provides
\begin{equation*}
  h_C( n d - n - d + 1) - h_C( n d - n - d) \le h_C( n d - n - 2d + 1) - h_C( n d - n - 2d ).
\end{equation*}
Since the Hilbert function of $C$ is unimodal with peak in degree $(n-1)(d-1)$, we have the following estimates
\begin{eqnarray*}
  h_C( n d - n - d) - h_C (n d - n - 2 d + 1) & \le &  h_C( n d - n - d) - h_C( n d - n - 2 d - 1) \\
0 & \le & h_C( n d - n - d - 1) - h_C( n d - n - 2 d - 2) \\
& \vdots & \\
0 & \le & h_C( n d - n - 2 d) - h_C( n d - n - 3 d - 1).
\end{eqnarray*}
Adding the last inequalities, we get the desired Inequality \eqref{eq:desired-ineq}, which completes the proof of Claim 2, and we are done.
\end{proof}

\begin{remark}  The above theorem  proves half of Conjecture 5.5.2 in
\cite{hss}. Indeed, it proves it when the number of variables is
even.
\end{remark}

\begin{remark} In \cite{hss}, Theorem 5.2.2, Harbourne,  Schenck and
Seceleanu have recently given an alternative proof of the above
theorem for $d\gg 0$..
\end{remark}

Claim 2 can be restated as a result about the growth of the
coefficients of a certain univariate polynomial.

\begin{proposition}
  \label{prop:coefficients-growth}
Let $n \ge 3$ and $d \ge 1$ be integers. Consider the univariate polynomial
\[
a_0 + a_1 z + \cdots + a_{2nd} z^{2nd} := \big (1 + z + \cdots + z^d)^{2n}.
\]
Then
\[
a_{nd} - a_{nd-1} \leq a_{nd - d - 1} - a_{nd-d-2}.
\]
\end{proposition}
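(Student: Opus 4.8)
The plan is to recognize the given polynomial as a Hilbert series and then to reduce the claimed inequality verbatim to Claim~2 in the proof of Theorem~\ref{2n var uniform case}. First I would record the identification
\[
(1 + z + \cdots + z^d)^{2n} = \left ( \frac{1 - z^{d+1}}{1-z} \right )^{2n},
\]
which is the Hilbert series of the Artinian complete intersection $A := k[x_1,\ldots,x_{2n}]/\langle x_1^{d+1},\ldots,x_{2n}^{d+1}\rangle$. Consequently $a_j = h_A(j)$ for every $j$, where now the generators have degree $e := d+1$. Everything after this identification is index bookkeeping.

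Next I would translate the target inequality into a statement about $h_A$. With $e = d+1$ one has $nd = ne-n$, $\ nd-1 = ne-n-1$, $\ nd-d-1 = ne-n-e$, and $nd-d-2 = ne-n-e-1$, so the inequality $a_{nd} - a_{nd-1} \le a_{nd-d-1} - a_{nd-d-2}$ reads
\[
h_A(ne - n) - h_A(ne - n - 1) \le h_A(ne - n - e) - h_A(ne - n - e - 1).
\]
This is precisely Inequality~\eqref{eq:ineq-ci}, i.e.\ the reformulation of Claim~2 in the proof of Theorem~\ref{2n var uniform case} obtained through Lemma~\ref{lem:expected-Hilb}, with the generator degree $d$ there played by $e = d+1$. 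The only point requiring attention is that the hypothesis $d \ge 1$ of the proposition corresponds exactly to $e \ge 2$, which is the range in which Claim~2 was established.

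It then remains to invoke the proof of Claim~2, which proceeds by induction on $n$ for arbitrary generator degree $e \ge 2$. The base case $n=3$ is the explicit evaluation $h_A(3e-3) - h_A(2e-3) - h_A(3e-4) + h_A(2e-4) = -\frac{1}{12}\,e(e-2)(5e^2+2e+5)$, which is $\le 0$ for all $e \ge 2$ (with equality when $e=2$, exactly the boundary case $d=1$). The inductive step writes $A \cong B \otimes_k k[x]/(x^e)$ with $B$ the analogous complete intersection in two fewer variables, and uses the symmetry and unimodality of the Hilbert functions of these complete intersections (a consequence of the Stanley--Watanabe theorem) to pass from the two-step difference inequality to the one-step difference inequality furnished by the induction hypothesis. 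I expect no genuine obstacle here beyond the careful matching of indices under $d \leftrightarrow e = d+1$ and the verification that the degree range $e \ge 2$ keeps the base case valid; the entire mathematical content is already contained in Claim~2, and the proposition is merely its coefficient-theoretic shadow.
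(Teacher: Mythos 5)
Your proposal is correct and is exactly the paper's own proof: the paper likewise identifies $a_i$ with the Hilbert function of $k[x_1,\ldots,x_{2n}]/\langle x_1^{d+1},\ldots,x_{2n}^{d+1}\rangle$ and invokes Inequality \eqref{eq:ineq-ci} (Claim~2 of Theorem \ref{2n var uniform case}) with generator degree $d+1$. Your index bookkeeping and the observation that $d\ge 1$ corresponds to the range $e=d+1\ge 2$ in which Claim~2 holds are both accurate.
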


\begin{proof}
Note that $a_i$ is the Hilbert function of $k[x_1,\ldots,x_{2n}]/\langle x_1^{d+1}, \cdots ,x_{2n}^{d+1} \rangle$ in degree $i$. Hence Inequality \eqref{eq:ineq-ci} establishes the claim.
\end{proof}

We now turn to an odd number of variables.  We are not able to give a result as comprehensive as that of an even number of variables, and we only consider the case of seven variables.

\begin{theorem}
  \label{thm:7 variables}
Let $L \in R = k[x_1,
\cdots ,x_{7}]$ be a general linear form, and let $I = \langle x_1^{d},
\cdots ,x_{7}^d,L^{d} \rangle$.
If $d=2$ then the ring $R/I$ has the WLP.  If $d \geq 4$ then  $R/I$ fails the WLP.
\end{theorem}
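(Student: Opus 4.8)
The plan is to treat $d=2$ and $d\ge4$ separately, in both cases passing through the fat-point reduction of Section \ref{gen appr}, which by Proposition \ref{reduce proj sp} and Theorem \ref{thm:inverse-system} identifies $\dim_k[R/(I,\ell)]_j$ with $\dim_k\cL_5(j;(j-d+1)^8)$, a system of eight general fat points in $\PP^5$. For $d=2$ the socle degree of $R/I$ is only $4$ (computed from $h_A(t)=\binom7t$ via Lemma \ref{lem:expected-Hilb} and the SLP of $A=R/\langle x_1^2,\dots,x_7^2\rangle$, giving $h_{R/I}=1,7,20,28,14$ and then $0$), so the WLP is a finite check. I would verify, for $m=1,2,3,4$, that $\dim_k[R/(I,\ell)]_m$ equals the expected first differences $6,13,8,0$: the first two directly, and the last two through the fat-point systems $\cL_5(3;2^8)$ (non-special of dimension $8$) and $\cL_5(4;3^8)$, the latter sent by one application of Lemma \ref{lem:Cremona} to a system carrying a point of multiplicity exceeding the degree, hence empty. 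Since these match, $R/I$ has the WLP.

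For $d\ge4$ I would prove that the multiplication from degree $3d-3$ to degree $3d-2$ fails, i.e.\ that
\[
\dim_k[R/(I,\ell)]_{3d-2}>\big[\dim_k[R/I]_{3d-2}-\dim_k[R/I]_{3d-3}\big]_+ .
\]
Writing $A=R/\langle x_1^d,\dots,x_7^d\rangle$ and $\Delta(m)=h_A(m)-h_A(m-1)$, the inequality $3d-2\le\tfrac12 d+\tfrac12\sum_{i=1}^{7}(d-1)$ lets Lemma \ref{lem:expected-Hilb} express the bracket as $\Delta(3d-2)-\Delta(2d-2)$. Using the symmetry of $h_A$ about $\tfrac{7(d-1)}2$ this becomes $\Delta(5d-4)-\Delta(4d-4)$, and I would show it is $\le0$ — equivalently that $h_{R/I}$ attains its maximum in degree $3d-3$ — by a coefficient inequality for $(1+t+\cdots+t^{d-1})^7$ of exactly the type recorded in Proposition \ref{prop:coefficients-growth} and Claim 2 of Theorem \ref{2n var uniform case}, which follows from the unimodality and symmetry of the complete-intersection Hilbert function. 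Thus the right-hand side is $0$, and it remains to prove $\dim_k\cL_5(3d-2;(2d-1)^8)>0$.

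Here I would apply four successive Cremona transformations (Lemma \ref{lem:Cremona}), each with shift $-2$ acting on six of the eight points, reducing exactly as in \eqref{cremona} and \eqref{eq:inductive} to
\[
\dim_k\cL_5(3d-2;(2d-1)^8)=\dim_k\cL_5(3d-10;(2d-7)^8);
\]
the four steps are legitimate precisely because $d\ge4$ keeps all intermediate multiplicities non-negative, which is exactly why $d=3$ is excluded (there the last multiplicity $2d-7$ becomes negative). To see that the reduced system is non-empty I would pick a quadric $Q\in\cL_5(2;1^8)$ and a cubic $G\in\cL_5(3;2^8)$ (both non-empty, having positive expected dimensions $13$ and $8$); then $Q\,G^{\,d-4}$ has degree $2+3(d-4)=3d-10$ and vanishes to order $\ge1+2(d-4)=2d-7$ at each of the eight points, and is non-zero whenever $d\ge4$. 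Hence $\dim_k[R/(I,\ell)]_{3d-2}>0$, and the WLP fails.

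The hardest part is the linear-system computation in $\PP^5$: unlike the three- and five-variable cases there is no De Volder--Laface-type formula (Theorem \ref{thm:fat-points-P3}) available in this dimension, and the eight points lie on a rational normal quintic that forces a large base locus invisible to the naive expected-dimension count (so that a direct estimate is in fact negative for large $d$). The two devices that overcome this are the Cremona reduction, which strips off the contribution of the quintic until the residual multiplicity profile $\cL_5(3d-10;(2d-7)^8)$ is controllable, and the explicit product $Q\,G^{\,d-4}$, which establishes non-emptiness uniformly in $d$ without ever computing the exact dimension. By comparison, the peak estimate for $h_{R/I}$ that kills the right-hand side is a secondary, purely combinatorial point.
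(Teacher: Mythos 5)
Your treatment of $d=2$ is fine (and arguably more self-contained than the paper's \cocoa\ check, modulo the unproved assertion that $\cL_5(3;2^8)$ is non-special, which requires Alexander--Hirschowitz rather than anything in this paper). The $d\ge 4$ argument, however, has a fatal gap: the claim that the bracket $\big[\dim_k[R/I]_{3d-2}-\dim_k[R/I]_{3d-3}\big]_+$ vanishes is simply false once $d\ge 7$. Writing $B=k[x_1,\dots,x_6]/\langle x_1^d,\dots,x_6^d\rangle$, the quantity you must show is non-positive equals $h_B(3d-2)-2h_B(2d-2)+h_B(d-2)$, and this is $-105,-160,-147$ for $d=4,5,6$ but $+56$ for $d=7$ and $+4180$ for $d=10$; asymptotically it behaves like $\bigl(\phi(3)+\phi(1)-2\phi(2)\bigr)d^5=\tfrac{15}{120}d^5>0$, where $\phi$ is the density of a sum of six uniform variables. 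In other words, the Hilbert function of $R/I$ is still \emph{increasing} at degree $3d-2$ for $d\ge 7$ (its peak sits near $3.18\,d$), so at that degree one expects injectivity with a positive defect growing like $d^5$, and your lower bound $\dim_k\cL_5(3d-2;(2d-1)^8)\ge 1$ coming from the section $Q\,G^{d-4}$ proves nothing about failure of maximal rank. The appeal to Proposition \ref{prop:coefficients-growth} does not save this: that inequality is proved only for an even number of variables and at the exact midpoint, and its analogue at degree $3d-2$ in seven variables is precisely what fails. (Your Cremona reduction to $\cL_5(3d-10;(2d-7)^8)$ and the non-emptiness argument are correct as far as they go; the problem is entirely in the choice of degree.)

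This is exactly why the paper works in degree $j=\lfloor\tfrac{17}{5}(d-1)\rfloor$, roughly $0.4d$ further out: there $h_{R/I}$ is genuinely past its peak, and the expected dimension is shown to be $\le 0$ by an explicit computation of $\binom{j+5}{5}-8\binom{j+5-d}{5}+28\binom{j+5-2d}{5}-56\binom{j+5-3d}{5}$ in each of the five congruence classes of $d-1$ modulo $5$ (five quintic polynomials in $e$, all negative in the relevant range), while the four-fold Cremona reduction still lands on a visibly non-empty system such as $\cL_5(e+3;2^8)$. To repair your argument you would either have to move to the paper's degree and redo the expected-dimension estimate case by case, or else produce a lower bound on $\dim_k\cL_5(3d-2;(2d-1)^8)$ exceeding a positive multiple of $d^5$, which the single product $Q\,G^{d-4}$ does not provide.
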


\begin{proof}
If $d=2$, we have verified on \cocoa\ that $R/I$ has the WLP.  In fact, \cocoa\ has also given the result that when $d=3$, $R/I$ fails to have the WLP because of the failure of injectivity.  However, for the failure of the WLP, a computer verification is not enough, since it is impossible to justify that the linear forms are ``general enough."  We conjecture that WLP also fails for $d=3$.

We now assume that $d\geq 4$.  We will show the failure of surjectivity in a suitable degree.  Let $\ell \in R$ be a general linear form. Let $j = \left \lfloor \frac{17}{5} (d-1) \right \rfloor$.  We want to compute
\[
\dim_k [R/(I,\ell)]_j = \dim_k \cL_5 (j; (j+1-d)^8)
\]
and in particular, to show that this dimension is non-zero.  Using Lemma \ref{lem:Cremona} four times we get
\begin{eqnarray}
  \label{eqn:lin-system 7 var}
\lefteqn{
  \dim_k [R/(I,\ell)]_{j} }  \nonumber \\
& = &   \dim_k \cL_5 (- j + 6 (d-1); (j+1-d)^2, (-j+5(d-1))^6) \\ \nonumber
& = & \dim_k \cL_5 (- 3j + 12 (d-1); (-j+5(d-1))^4, (-3 j+11(d-1))^4, ) \\ \nonumber
& = & \dim_k \cL_5 (- 5j + 18 (d-1); (-3 j+11(d-1))^6, (-5j+17(d-1))^2) \\ \nonumber
& = & \dim_k \cL_5 (- 7j + 24 (d-1); (-5j+17(d-1))^8).  \nonumber
\end{eqnarray}
This computation is correct and has a chance of resulting in a non-empty linear system if
\[
0 \le -5j+17(d-1) < - 7j + 24 (d-1),
\]
which is true since $j \leq \frac{17}{5} (d-1)$.

Thus, we distinguish the following five cases, where $e$ is an integer.

{\em Case 1}: $d-1 = 5 e$, thus $j = 17 e$.

{\em Case 2}: $d-1 = 5 e + 1$, thus $j = 17 e + 3$.

{\em Case 3}: $d-1 = 5 e + 2$, thus $j = 17 e + 6$.

{\em Case 4}: $d-1 = 5 e + 3$, thus $j = 17 e + 10$.

{\em Case 5}: $d-1 = 5 e + 4$, thus $j = 17 e + 13$.
Computation \eqref{eqn:lin-system 7 var} then shows that $\dim_k [R/(I,\ell)]_j$ equals
\begin{eqnarray*}
  \hbox{Case 1:} && \dim_k \cL_5 (e; 0^8) \\
  \hbox{Case 2:} && \dim_k \cL_5 (e+3; 2^8) \\
  \hbox{Case 3:} && \dim_k \cL_5 (e+6; 4^8) \\
  \hbox{Case 4:} && \dim_k \cL_5 (e+2; 1^8) \\
  \hbox{Case 5:} && \dim_k \cL_5 (e+5; 3^8).
\end{eqnarray*}
It is clear that these linear systems are not empty if $e\geq 0$, thus $d\geq1$.

To prove failure of the WLP in degree $j$ it remains to check that the expected dimension is zero. Using Lemma \ref{lem:expected-Hilb}, we obtain
\begin{eqnarray*}
\lefteqn{
\dim_k [R/I]_{j} - \dim_k [R/I]_{j-1}   } \\[1ex]
& = &  \binom{j+5}{5} - 8 \binom{j+5-d}{5} + 28 \binom{j+5-2d}{5}  - 56 \binom{j+5-3d}{5}.
\end{eqnarray*}
Notice that the last binomial coefficient is zero if $d \leq 10$, while the third one is zero for $d \leq 2$.  However, the computations of the polynomials below are not affected.
Distinguishing the five cases above, this dimension times $5!$ equals
\begin{eqnarray*}
  \hbox{Case 1:} && -101 995\  e^5  -69925\ e^4-15975\ e^3+565\ e^2+730\ e+120 \\
  \hbox{Case 2:} &&  -101 995\ e^5  -139850\ e^4-60225\ e^3-1330\ e^2+5080\ e+960\\
  \hbox{Case 3:} && -101 995\ e^5 -209775\ e^4-133975\ e^3-8145\ e^2+19730\ e+5040 \\
  \hbox{Case 4:} && -101 995\ e^5 -359875\ e^4-499175\ e^3-336365\ e^2-107910\ e-12600 \\
  \hbox{Case 5:} && -5\ (e+1)\ (20399\ e^4+65561\ e^3+74044\ e^2+32716\ e+3840)
\end{eqnarray*}
It is clear that the first three  polynomials are negative if $e \ge 1$, while the last two are negative whenever $e \ge 0$.  Thus the expected dimension is zero whenever $d \geq 4$.  In particular, we have shown the failure of surjectivity (in particular the failure of the WLP) for $d \geq 4$.
\end{proof}

In trying to apply the approach of Theorem \ref{thm:7 variables} to the general case, we were able to mimic the choice of $j$, as well as the main details of the proof that $\dim [R/(I,\ell)]_j > 0$.  However, a proof of the required inequality to verify that it is surjectivity rather than injectivity that fails eluded us.  Based on experiments with \cocoa, we end with the following conjecture (also to complete the case of seven variables).  Notice that the case $d=2$ has the WLP in seven variables (as noted above).

\begin{conjecture} \label{conj uniform}
Let $R = k[x_1,\dots,x_{2n+1}]$, where $n \geq 4$.
Let $L \in R$ be a general linear form, and let $I = \langle x_1^{d},
\cdots ,x_{2n+1}^d,L^{d} \rangle$.
Then the ring $R/I$ fails the WLP if and only if $d>1$.  Furthermore, if $n=3$ then $R/I$ fails the WLP when $d=3$.
\end{conjecture}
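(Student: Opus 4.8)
The conjecture packages two rather different assertions, and I would attack them separately.

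First consider the main assertion for $n \geq 4$. Since $d=1$ trivially yields the WLP, the content is that $d \geq 2$ forces failure, and the natural plan is to run the machine of Theorem~\ref{thm:7 variables} in $2n+1$ variables. By Proposition~\ref{reduce proj sp} and Theorem~\ref{thm:inverse-system}, for a general linear form $\ell$ and any degree $j$ one has $\dim_k[R/(I,\ell)]_j = \dim_k \cL_{2n-1}(j;\,(j+1-d)^{2n+2})$, a system of $2n+2$ general fat points of equal multiplicity in $\PP^{2n-1}$. Following the seven-variable template, I would take the critical degree $j$ to be the largest integer for which the iterated standard Cremona transformation of Lemma~\ref{lem:Cremona} (which here acts on $2n$ of the $2n+2$ points at a time, leaving two fixed) drives the common multiplicity down to a small bounded value while keeping the degree non-negative; concretely $j$ should be the analogue of $\lfloor \frac{17}{5}(d-1)\rfloor$, namely the floor of a fixed rational multiple of $d-1$ read off from the Cremona recursion in dimension $2n-1$. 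The proof then splits, exactly as in Theorem~\ref{thm:7 variables}, into showing $\dim_k[R/(I,\ell)]_j > 0$ (the actual dimension) and $h_{R/I}(j) - h_{R/I}(j-1) \leq 0$ (the expected dimension), after which Inequality~\eqref{eq:max-rank-I} gives failure of surjectivity.

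The first half I expect to be routine, and is in fact the part the authors report they can already carry out: after a fixed number of applications of Lemma~\ref{lem:Cremona} the system reduces, in each residue class of $d-1$ modulo the relevant modulus, to $\cL_{2n-1}(\delta; c^{2n+2})$ with $c \in \{0,1,2,3,4\}$, and a handful of general fat points of bounded multiplicity in a high-dimensional projective space is non-empty for all $e \geq 0$; where a clean certificate is wanted one can invoke a Lemma~\ref{indep cond}-type independence statement after a further reduction, or simply exhibit an explicit non-zero form, so $\dim_k[R/(I,\ell)]_j \geq 1$ throughout. The hard half, and the genuine obstacle, is the expected-dimension inequality. Using Lemma~\ref{lem:expected-Hilb} this is an inclusion--exclusion alternating sum of $\sum_{i}(-1)^i\binom{2n+2}{i}\binom{j + (2n-1) - id}{2n-1}$-type evaluated at the chosen $j$, equivalently --- in the spirit of Proposition~\ref{prop:coefficients-growth} --- a statement about the growth of two specified coefficients of the Hilbert series $(1+z+\cdots+z^{d-1})^{2n+1}$ of the complete intersection $A = R/\langle x_1^d,\ldots,x_{2n+1}^d\rangle$. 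The even-variable result (Claim~2 of Theorem~\ref{2n var uniform case}) proved its analogue by peeling off one variable at a time, writing $A \cong B \otimes_k k[x]/(x^d)$, and crucially exploiting the exact symmetry $h_B(nd-n) = h_B(nd-n-d+1)$ of the $h$-vector of a complete intersection in an \emph{even} number of coordinate variables. For an \emph{odd} number of coordinate variables the midpoint of the symmetric $h$-vector lands a half-integer away, so this reflection no longer aligns the two terms the induction must compare, and the parity bookkeeping fails to close. The real work is to find a replacement for that symmetry: I would look for either a generating-function identity making the sign of the relevant coefficient difference manifest, or a two-step induction (peeling two variables to restore parity) combined with a unimodality estimate as in the even case. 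Reconciling the leading-order behavior in $d$ with the lower-order corrections uniformly in $n$ is exactly what eluded the authors, and I expect it to be the decisive difficulty.

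Finally, the residual seven-variable case $d=3$ is of a different nature and should be handled by hand. Here the chosen degree is $j=6$ and the relevant system $\cL_5(6;4^8)$ is fixed by the standard Cremona transformation, so the reduction is trivial; moreover the expected dimension equals $\binom{11}{5} - 8\binom{8}{5} + 28 = 42 > 0$, so the failure --- if it occurs --- is one of \emph{injectivity}, not surjectivity, and the argument above does not apply. To prove it one must show that the \emph{generic} value of $\dim_k \cL_5(6;4^8)$ strictly exceeds $42$. Since this dimension is upper semicontinuous in the points, a numerical random-coordinates computation only bounds the generic value from above; a rigorous proof needs a lower bound on the generic dimension. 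I would obtain this either by an exact computation in symbolic general coordinates (a finite, if large, certification that the system is special), or, more satisfyingly, by exhibiting a forced base-locus component through a Bezout argument of the kind used throughout Section~\ref{4-variables}, thereby producing the extra section explicitly. Given the self-duality of $\cL_5(6;4^8)$ under Cremona, this structural route is where I would invest the effort.
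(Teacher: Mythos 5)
The statement you are addressing is labelled a \emph{conjecture} in the paper, and the authors offer no proof of it: the paragraph immediately preceding it records that they could mimic the choice of the critical degree $j$ and the proof that $\dim_k[R/(I,\ell)]_j>0$, but that ``a proof of the required inequality to verify that it is surjectivity rather than injectivity that fails eluded us.'' Your proposal retraces exactly this route --- reduce to $\cL_{2n-1}(j;(j+1-d)^{2n+2})$, iterate Lemma~\ref{lem:Cremona} on $2n$ points at a time with $j$ a fixed rational multiple of $d-1$, show the resulting system is nonempty, and then try to show $h_{R/I}(j)-h_{R/I}(j-1)\le 0$ --- and it stalls at exactly the same place. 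That is a genuine gap, not a stylistic one: the decisive step is left as ``I would look for either a generating-function identity \dots or a two-step induction,'' which is a description of what a proof would need rather than a proof. Your diagnosis of \emph{why} the even-variable argument of Theorem~\ref{2n var uniform case} breaks (the reflection $h_B(nd-n)=h_B(nd-n-d+1)$ used in Claim~2 requires an even number of coordinate variables, and the midpoint shifts by a half-integer otherwise) is accurate and matches the obstruction implicit in the paper, but identifying the obstruction does not remove it.

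The residual case $n=3$, $d=3$ has the same character. Your computations are correct --- $j=6$, the system $\cL_5(6;4^8)$ is Cremona-fixed, and the expected difference $\binom{11}{5}-8\binom{8}{5}+28=42$ is positive, so one must prove $\dim_k\cL_5(6;4^8)>42$, i.e.\ that the system is special by a definite amount for \emph{general} points. Semicontinuity only bounds the generic dimension from above, as you note, so a numerical experiment does not suffice; but the Bezout-type base-locus argument you propose in its place is not carried out, and it is not obvious that the quadric or linear base-locus components available in $\PP^5$ account for the required excess. In short: the proposal is a well-calibrated research plan that correctly reproduces the authors' own partial progress and correctly locates the two open difficulties, but neither the expected-dimension inequality for general odd $2n+1$ nor the speciality of $\cL_5(6;4^8)$ is established, so the conjecture remains unproved.
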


\bigskip

\noindent{\bf Acknowledgements:} Shortly before the end of the
writing of this paper we were informed that  B.\ Harbourne, H.\
Schenck and A.\ Seceleanu were able to prove Theorem \ref{2n var
uniform case} for $d\gg 0$. We are very grateful to H.\ Schenk for
sending their paper to us, and for the helpful discussions we have
had. We also thank M.\ Boij for interesting and useful
discussions.


\begin{thebibliography}{999}

\bibitem{AP} F.\ Ardila and A.\ Postnikov, {\em Combinatorics and geometry of power ideals}, Trans.\ Amer.\ Math.\ Soc.\ {\bf 362} (2010), 4357--4384.

\bibitem{BK} H.\ Brenner and A.\ Kaid, {\em Syzygy bundles on $\mathbb P^2$
and the Weak Lefschetz Property},  Illinois J.\ Math.\ {\bf 51}
(2007),  1299--1308.

\bibitem{BK2} H.\ Brenner and A.\ Kaid, {\em A note on the Weak Lefschetz Property of monomial complete intersections in positive characteristic},  preprint 2010.

\bibitem{BMMNZ} M. \ Boij, J. \ Migliore, R.\ Mir\'o-Roig, U.\ Nagel and F. \ Zanello, {\em On the shape of a pure $O$-sequence}. Preprint, arXiv:1003.3825.

\bibitem{ceg}  M.V. \ Catalisano, Ph. \ Ellia and A. \ Gimigliano, Alessandro, {\em
Fat points on rational normal curves}, J. Algebra {\bf 216}
(1999), 600-619.


\bibitem{cocoa}   CoCoATeam,
  CoCoA: a system for doing
     Computations in Commutative Algebra.
  Available at http://cocoa.dima.unige.it


\bibitem{CN} D.\ Cook II and U.\ Nagel, {\em The Weak Lefschetz Property,
monomial ideals, and lozenges}, Preprint, 2010.

\bibitem{DL}  C.\ De Volder and A.\ Laface, {\em  On linear systems
of $\PP^3$ through multiple points}, J. Algebra {\bf 310} (2007),
207--217.

\bibitem{dirocco} S.\ Di Rocco,  {\em  $k$-very ample line bundles
on del Pezzo surfaces},
Math.\ Nachr.\ {\bf 179} (1996), 47--56.

\bibitem{Dumnicky} M.\ Dumnicki, {\em An algorithm to bound the regularity
and nonemptiness of linear systems in $\PP^n$},   J.\ Symbolic
Comput.\ {\bf 44}  (2009),  1448--1462.

\bibitem{EI} J.\ Emsalem and A.\ Iarrobino, {\em Inverse system of a symbolic power $I$}, J.\ Algebra {\bf 174} (1995), 1080-1090.

\bibitem{Ge} A.V.\ Geramita, {\em Inverse systems of fat points:
  Waring's problem, secant varieties of Veronese varieties and
  parameter spaces for Gorenstein ideals},  The Curves Seminar at
  Queen's, Vol.\  X (Kingston, ON, 1995), 2--114, Queen's Papers in
  Pure and Appl. Math., 102, Queen's Univ., Kingston, ON, 1996.

\bibitem{GHM} A.V.\ Geramita, B. Harbourne and J. Migliore, {\em Classifying Hilbert functions of fat point subschemes of $\mathbb P^2$}, Collect.\ Math.\ {\bf 60} (2009), No.\ 2, 159--192.

\bibitem{hss} B.\ Harbourne, H.\ Schenck and A.\ Seceleanu, {\em Inverse systems, Gelfand-Tsetlin patterns and the Weak Lefschetz Property}, preprint 2010.

\bibitem{HMNW} T.\ Harima, J.\ Migliore, U.\ Nagel, and J.\
Watanabe,  {\em The Weak and Strong     Lefschetz Properties for
artinian $K$-Algebras}, J.\ Algebra {\bf 262} (2003), 99--126.


\bibitem{LU} A. \ Laface and U.\ Ugaglia, {\em On a class of special
linear systems of $\PP^3$}, Trans. \ Amer. \ Math. \ Soc. {\bf 358}
(2006),
 5485--5500.


\bibitem{LZ} J.\ Li and F.\ Zanello, {\em Monomial complete intersections,
the Weak Lefschetz Property and plane partitions}, Preprint, 2010.

\bibitem{mm} J.\ Migliore and R.\ Mir\'o-Roig, {\em Ideals of general forms
and the ubiquity of the Weak Lefschetz Property}, J.\ Pure Appl.\
Algebra {\bf 182} (2003), 79--107.

\bibitem{mmn} J.\ Migliore, R.\ Mir\'o-Roig and U.\ Nagel, {\em Monomial
ideals, almost complete intersections and the Weak Lefschetz
Property},  to appear in Trans.\ Amer.\ Math.\ Soc.

\bibitem{Nagata} M.\ Nagata, {\em On the fourteenth problem of
Hilbert}, In:
 Proc.\ Internat.\ Congress Math.\ 1958,  pp. 459--462, Cambridge
Univ.\ Press, New York,  1960.

\bibitem{ss} H.\ Schenck and A.\ Seceleanu, {\em The Weak Lefschetz Property and powers of linear forms in $K[x,y,z]$}, Proc.\ Amer.\ Math.\ Soc.\ {\bf 138} (2010), 2335--2339.

\bibitem{stanley} R.\ Stanley, {\em Weyl groups, the hard Lefschetz
theorem, and the Sperner property}, SIAM J.\ Algebraic Discrete Methods
{\bf 1} (1980), 168--184.

\bibitem{sx} B. \ Sturmfels and Z. \ Xu, {\em Sagbi bases of Cox-Nagata
rings} J. Eur. Math. Soc. {\bf 12} (2010),  429--459.

\bibitem{watanabe} J.\ Watanabe, {\em The Dilworth number of Artinian
rings and finite posets with rank function}, Commutative Algebra and
Combinatorics, Advanced Studies in Pure Math.\ Vol.\ 11, Kinokuniya Co.\
North Holland, Amsterdam (1987), 303--312.

\bibitem{zanello} F.\ Zanello, {\em A non-unimodal codimension 3
level $h$-vector},  J. Algebra {\bf 305}  (2006),  949--956.

\end{thebibliography}
\end{document}